\definecolor{linkcolor}{named}{Maroon}
\definecolor{citecolor}{named}{OliveGreen}
\definecolor{urlcolor}{named}{RoyalPurple}
\definecolor{okcolor}{named}{OliveGreen}
\definecolor{alertcolor}{named}{BrickRed}
\newcommand{\ball}{\operatorname{\mathcal B}}
\def\ol{\overline}
\def\prt{\partial}
\renewcommand{\d}{\operatorname{d}}
\newcommand{\dist}{\operatorname{dist}}
\newcommand{\intrinsicdist}{\operatorname{dist}_\text{\bf I}}
\newcommand{\CAT}[1]{\text{CAT}({#1})}
\newcommand{\Prob}[1]{\operatorname{\mathbb{P}}\left[#1\right]}
\newcommand{\Reals}{\mathbb{R}}
\def\prt{\partial}
\def\wt{\widetilde}
\def\wh{\widehat}
\def\bH{{\bf H}}
\def\bK{{\bf K}}
\def\NN{{\mathcal N}}
\def\SS{{\mathcal S}}
\def\DD{{\mathcal D}}
\def\CC{{\mathcal C}}
\def\eps{\varepsilon}
\def\JJ{\mathbb{J}}
\def\KK{\mathbb{K}}
\def\P{\mathbb{P}}
\def\al{\alpha}
\def\idist{\intrinsicdist}
\newtheorem{thm}{Theorem}[section]
\newtheorem{lemma}[thm]{Lemma}
\newtheorem{lem}[thm]{Lemma}
\newtheorem{prop}[thm]{Proposition}
\newtheorem{cor}[thm]{Corollary}
\newtheorem{defn}[thm]{Definition}
\newtheorem{example}[thm]{Example}
\newtheorem{conjecture}[thm]{Conjecture}
\newtheorem{rem}[thm]{Remark}
\numberwithin{equation}{section}
\def\lp{K}
\def\Lp{{\mathcal K}}
\def\includegraphicsKB{\includegraphics}
\begin{document}

\title{Rubber Bands, Pursuit Games and Shy Couplings}

\author{Maury Bramson}
\author{Krzysztof Burdzy}
\author{Wilfrid Kendall}

\address[M.B.]{School of Mathematics, Vincent Hall, 206 Church St. SE., University of Minnesota, Minneapolis, MN 55455, USA}
\email{bramson@math.umn.edu}
\address[K.B.]{Department of Mathematics, Box 354350, University of Washington, Seattle, WA 98195, USA}
\email{burdzy@math.washington.edu}
\address[W.S.K.]{Department of Statistics, University of Warwick, Coventry CV4 7AL, UK}
\email{w.s.kendall@warwick.ac.uk}

\thanks{Research supported in part by NSF Grants  DMS-0906743 and DMS-1105668, and
by grant N N201 397137, MNiSW, Poland. }

\subjclass{60J65}

\keywords{
CAT(\(0\));
CAT(\(\kappa\));
co-adapted coupling;
co-immersed coupling;
coupling;
Lion and Man problem;
pursuit problem;
reflected Brownian motion;
Reshetnyak majorization;
rubber band;
shy coupling;
stable rubber band;
star-shaped domain;
well-contractible domain.
}

\begin{abstract}\noindent
In this paper,
we consider pursuit-evasion and probabilistic consequences of some geometric notions for bounded and suitably regular domains in Euclidean space that are \(\CAT\kappa\) for some $\kappa > 0$. These geometric notions are useful for analyzing the related problems of (a) existence/nonexistence of successful evasion strategies for the Man in Lion and Man problems, and (b) existence/nonexistence of shy couplings for reflected Brownian motions.
They involve properties of \emph{rubber bands} and 
the extent to which
a loop in the domain in question can be 
deformed to a point without, in between,
increasing its loop length. The existence of a \emph{stable rubber band} will imply the existence of a successful evasion strategy but, 
if all loops in the domain are \emph{well-contractible}, then no successful evasion strategy  will exist and  there can be no co-adapted shy coupling.  
For example, there can be no shy couplings in bounded and suitably regular star-shaped domains and so, in this setting, 
any two reflected Brownian motions must 
almost surely make arbitrarily close encounters as $t\rightarrow\infty$.
\end{abstract}

\maketitle

\section{Introduction}\label{sec:intro}

The motivation for this article is a conjecture
about shy couplings, that is, about constructions of pairs of reflected Brownian motions in a bounded Euclidean domain 
that are contrived
so that, 
for some 
fixed \(\eps>0\),
they never come within distance $\eps$ 
of
each other. In \citet{BramsonBurdzyKendall-2011}, we showed that strong results about nonexistence of shy couplings could be proved using ideas of pursuit-evasion games and modern metric geometry.
In the current paper, we introduce new metric geometry notions (such as``rubber bands'' and ``well-contractible loops") 
that can be used to derive general results about pursuit-evasion games and further results about shy coupling.
In particular, while \citet{BramsonBurdzyKendall-2011} shows that shy couplings cannot be supported by suitably regular bounded \(\CAT0\) domains, here we show that shy couplings
cannot be supported by a substantially larger family of domains
including, for example, bounded star-shaped domains with suitably regular boundaries
(see Definition \ref{def:CAT-definitions} for the definitions of \(\CAT0\) and \(\CAT\kappa\)
domains). 
Our results apply to domains $D \subset \mathbb{R}^d$, for $d\ge 2$, but their main
interest is in $d\ge 3$, since all bounded simply connected domains in $d=2$ are $\CAT0$, and hence the
results from \citet{BramsonBurdzyKendall-2011} apply in that setting.

We first summarize our results for pursuit-evasion games.  In this deterministic setting, there are two players, a Lion and a Man,
each of whom is constrained to remain in a given bounded domain $D$.  Both the Lion and the Man are allowed 
to move within $D$ at up to unit speed.  We are interested in the question as to whether, 
for some strategy of the Lion, the Lion
is able to come within distance $\varepsilon$ of the Man,  irrespective of the strategy of the Man and
for any $\varepsilon >0$.  
We will say that the Lion \emph{captures} the Man or the Man \emph{evades} the Lion, depending on whether or not such a strategy exists for every pair of initial positions.

The pursuit-evasion problem in a disk is a well-known problem, and includes the question
as to whether the Man can avoid the Lion indefinitely (even though the distance between them
is allowed to go to 0).  See, for example,
\cite{Isaacs-1965}, \cite{Littlewood-1986}, \cite{Nahin-2007}.
In our current setting, we consider bounded domains $D \subset \mathbb{R}^d$.

For the Lion and Man pursuit-evasion problem, we will determine conditions on the domain $D$ under which
the Man can evade the Lion and under which the Lion can capture the Man.  Under suitable
side conditions, the first scenario holds when $D$ possesses
a \emph{stable rubber band}, which is, in essence, a locally distance-minimizing loop.  
Section 3 is devoted to showing this, with the main result being Theorem \ref{thm:srb}.  
The second scenario holds when all loops in $D$ are \emph{well-contractible}, 
which in essence means that the loop can be 
contracted to a point, with the length of the intermediate loops decreasing at a uniform rate
with respect to the homotopy parameter.
Section 4 shows that the Lion is able to capture the Man when all loops are well-contractible, 
with the main result being Theorem \ref{thmo13}.  

The assumption that $D$ is $\CAT\kappa$ figures prominently in both arguments, and
in the succeeding sections of the paper.  
Roughly speaking, a domain $D$ satisfies the \(\CAT\kappa\) condition if suitably small 
triangles defined using the  intrinsic distance in $D$ have angles no greater than angles of triangles with the same side lengths on the surface of the Euclidean
sphere of radius \(1/\sqrt\kappa\) (the formal definition of \(\CAT\kappa\) domains will be given later in the paper).
We will also require some regularity on the boundary of $D$, which will be given by the
uniform exterior sphere and uniform interior cone conditions
(see Definitions \ref{def:exterior-sphere-condition}-\ref{def:UICC}); a domain $D$ satisfying both
conditions will be referred to as an ESIC domain.  
An ESIC domain whose loops are all well-contractible will be referred 
to as a \emph{CL domain}.  Since an ESIC domain is $\CAT\kappa$, for some
$\kappa \ge 0$ (see Corollary \ref{n7.2}), these two boundary conditions will in fact suffice for many of our results.  
The definitions of these
terms and others that will be employed in the paper are given in Section 2.

The second half of the paper is devoted mostly to shy couplings.  A reflected 
Brownian motion on a domain $D$
is said to admit a \emph{shy coupling} if there exists a coupling of Brownian motions $X$ and $Y$
on $D$, for some choice of initial points \(x\) and \(y\), such that
\[
 \Prob{\inf \left\{\dist(X_t, Y_t) : {0\leq t<\infty}\right\}\,>\,0\;|\; X_0=x, Y_0=y} \quad>\quad0\,.
\]
(We consider throughout only couplings that are co-adapted, that is, that do not anticipate
the future.)  An example of a shy coupling is given by Brownian motions $X$ and $Y$ on a circle,
where $Y$ is produced from $X$ by a nontrivial rotation.  Except for similar specialized examples, all
known results involve the absence of shy couplings, and only a partial theory is known.
\citet{BenjaminiBurdzyChen-2007}, who introduced the notion of shy coupling,  showed
that no shy couplings exist for reflected Brownian motion in convex bounded
planar domains with $C^2$ boundaries containing no line segments;
\citet{Kendall-2009a} used a direct and somewhat quantitative approach to remove regularity requirements in the convex case.
\citet{BramsonBurdzyKendall-2011} showed that no
shy couplings exist for bounded ESIC domains that are $\CAT0$.  
(Also see \citet{BramsonBurdzyKendall-2011} for further background.)

Section 5 extends the approach taken in \citet{BramsonBurdzyKendall-2011}, and shows, 
in Theorem \ref{thmo24}, that no shy couplings exist for bounded CL domains. 
The basic idea behind the argument
is to transform the process of coupled Brownian motions, by using the 
Cameron-Martin-Girsanov transformation and scaling time, 
to a process where each sample path is approximated by a solution of the Lion and
Man problem.  In the present context, one can then apply  
Theorem \ref{thmo13} to this Lion and Man problem. 

In Section 6, it is shown that there is no analogous application of Theorem \ref{thm:srb}
whereby the existence of a shy coupling follows from the existence of a stable 
rubber band.  In fact, starting with any bounded domain possessing a stable rubber band,
it is possible to append another larger domain, which preserves the rubber band, 
so that the combined domain has no shy couplings.

A number of examples of CL domains and domains with rubber bands are given in Section 7. 
In particular, in Examples \ref{o24.1.ii} -\ref{jan9.3}, various examples of CL domains are
given, such as restrictions of $\CAT0$ domains that themselves are not $\CAT0$, 
including star-shaped domains.  
At the end of the section, we conjecture that, off a nowhere dense family of domains (taken with respect to the Gromov-Hausdorff distance), 
all bounded domains with bounded principal curvatures are either CL or possess
a \emph{semi-stable rubber band} (that is, the rubber band is minimal, but not 
necessarily strictly minimal).  

Employing a result from \citet{Lytchak-2004}, the claim that ESIC domains are
$\CAT\kappa$, for some $\kappa \ge 0$, is shown in the short appendix.

\section{Rubber bands}\label{sec:rubber}

In this section, we introduce some basic notions for domains in Euclidean space, including: conditions for suitable regularity of the boundary, 
intrinsic distance and related concepts from metric geometry, and rectifiable loops and their homotopies. 
Most importantly, we introduce the new notion of \emph{rubber bands}, as well as several associated concepts.
The notion of rubber band will play a key r\^ole in the main results in later sections on pursuit-evasion and on shy coupling of reflected Brownian motion.   

Suppose that $D\in \Reals^d$ is a bounded domain (that is, an open connected set).
The \emph{intrinsic distance} $\idist(v,z)$ between $v,z\in D$ is the infimum of 
lengths $\ell_\Gamma$ of rectifiable arcs $\Gamma \subset D$ that 
contain $v$ and $z$. We will typically wish
to restrict our attention to domains for which the notion of intrinsic distance extends to the entire closure \(\ol D\) without discontinuity at the boundary \(\partial{D}\).
To achieve this, we 
follow \citet{BramsonBurdzyKendall-2011} in requiring that \(D\) satisfy both
the uniform exterior sphere condition and the uniform interior cone condition defined below.  Here and elsewhere,
$\ball(z, r)$ denotes the open Euclidean ball of radius $r$ centered at $z$. 

\begin{defn}[Uniform exterior sphere condition, from {\citealp[\S1, Condition $(A)$]{Saisho-1987}}]\label{def:exterior-sphere-condition}
A domain \(D\) satisfies a \emph{uniform exterior sphere condition
based on radius \(r\)} if, for every \(z\in \partial D\), the set of ``exterior normals''
\(\NN_{z,r} = \{\nu \in \Reals^d: |\nu|=1, \ball(z+ r \nu, r) \cap D = \emptyset\}\)
is non-empty, with \(\NN_{z,r}=\NN_{z,s}\) for \(0<s\leq r\).
\end{defn}%
\begin{defn}[Uniform interior cone condition, from {\citealp[\S1, Condition $(B^\prime)$]{Saisho-1987}}]\label{def:UICC}
A dom\-ain
\(D\) satisfies a \emph{uniform interior cone
condition based on radius \(\delta>0\) and angle
\(\alpha\in(0,\pi/2]\)} if, for every \(v\in
\partial D\), there is at least one unit vector \(\mathbf{m}\) such that the cone
\(C(\mathbf{m})=\{z:\langle z,\mathbf{m}\rangle > |z|\cos\alpha\}\) satisfies
\[
\left(w + C(\mathbf{m})\right)\cap\ball(v,\delta) \quad\subseteq\quad D \qquad
\text{ for all } w\in D\cap\ball(v,\delta)\,.
\]
We say that
 the cone \(w+C(\mathbf{m})\)
 \emph{is
based on \(w\)
 and angle \(\alpha\in(0,\pi/2]\)}.
\end{defn}
It was shown in \citet[Section 2]{BramsonBurdzyKendall-2011} that the uniform interior cone condition is equivalent to the better known Lipschitz boundary condition
(see Definition \ref{def:lipschitz-domain}). 

The uniform exterior sphere and uniform interior cone conditions were employed by \citet{Saisho-1987} 
to define reflecting Brownian motion in \(D\).
However, the conditions are also useful in establishing regularity of the intrinsic distance.
In particular, if \(D\) satisfies both conditions, then the intrinsic distance between two close points in \(D\) is comparable to the Euclidean distance
\cite[Proposition 12]{BramsonBurdzyKendall-2011}, and  the 
intrinsic distance therefore extends to the entire closure \(\ol D\) without discontinuity at \(\partial{D}\).

The following two simple examples demonstrate the need for both conditions: 
\begin{example}\label{ex:slit-disk}
Suppose that \(D\) is formed from  the disc $\ball((0,0),1)$ by deleting the line segment from $(0,0)$ to $(1,0)$. Then \(D\)
satisfies the uniform interior cone condition, although
the uniform exterior sphere condition fails on the line segment from $(0,0)$ to $(1,0)$.
The intrinsic distance cannot be extended to $\ol D$ in a continuous manner.
\end{example}
\begin{example}\label{ex:eroded-cube}
Suppose that \(D\) is formed from the cube \([-1,1]^3\), in \(3\)-space, by deleting the two continuous families of closed balls
\(\{\ol{\ball((1,0,u),1)}:-1/2\leq u\leq 1/2\}\) and \(\{\ol{\ball((-1,0,u),1)}:-1/2\leq u\leq 1/2\}\). Here, \(D\)
satisfies the uniform exterior sphere condition, although the uniform interior cone condition fails at the open line
segment \(\{(0,0,u):-1/2<u<1/2\}\). The domain \(D\) is connected, with the two points \((0,\pm\eps,0)\) 
being distance \(\sqrt{1+4\eps^2}\) apart with respect to the
intrinsic distance for \(D\). On the other hand, the two points
are distance \(2\eps\) apart
in terms
of  both the Euclidean metric and the intrinsic distance for \(\ol D\).
Thus, the intrinsic distance cannot be extended to $\ol D$ in a continuous manner. 
\end{example}
We therefore typically  consider domains that satisfy the uniform exterior sphere and interior cone conditions;
we refer to such domains as \emph{ESIC domains} (i.e., uniform Exterior Sphere and Interior Cone domains).
(In principle, one might consider generalizing the following results to non-ESIC domains; one then needs to 
 take into account  the pathologies illustrated in the two preceding examples.)


The following classic curvature comparison property is central to our arguments.
Following \citet[\S II.1, Definition 1.1]{BridsonHaefliger-1999}
we define the $\CAT\kappa$ property as follows.
\begin{defn}\label{def:CAT-definitions}
For \(\kappa>0\), the domain $D$ is a \(\CAT\kappa\)
domain if any two distinct points with distance less than $\pi/\sqrt{\kappa}$ are joined by a geodesic and the distance between any two
points on the perimeter of any geodesic triangle $\triangle pqr$ of perimeter less than $2\pi/\sqrt{\kappa}$
is no greater than the distance
between the corresponding points of the model triangle $\triangle \wt p\, \wt q\, \wt r$ with the same side lengths on
the \(2\)-dimensional Euclidean sphere of radius $1/\sqrt{\kappa}$. 
The domain $D$ is a \(\CAT0\)
domain if any two distinct points at whatever distance are joined by a geodesic and the distance between any two
points on the perimeter of any geodesic triangle $\triangle pqr$
is no greater than the distance
between the corresponding points of the model triangle $\triangle \wt p\, \wt q\, \wt r$ with the same side lengths in
the \(2\)-dimensional Euclidean plane.
\end{defn}

A bounded domain satisfying the uniform exterior sphere and uniform interior 
cone conditions is 
\(\CAT\kappa\), for some \(\kappa>0\). 
We sketch a proof in in Appendix \ref{app:reach}. The claim has already been proved in the literature in a slightly weaker form (see Remark \ref{m29.1}).
From time to time in the article, we will explicitly recall that ESIC domains satisfy the \(\CAT\kappa\) 
property, since our estimates often make use of
the curvature parameter \(\kappa\).

For $\kappa>0$, the scaling $D \to \sqrt{\kappa} D$ transforms a $\CAT{\kappa}$ domain into 
a $\CAT{1}$ domain. (See, for example, the appendix to \citealp{AlexanderBishopGhrist-2009}.)
Note that, for $\kappa_1 \le \kappa_2$, if a domain $D$ is $\CAT{\kappa_1}$, then it is also
automatically $\CAT{\kappa_2}$.
Where convenient, we will limit our arguments to the cases $\kappa=0,1$.



We next introduce some notation for rectifiable loops and the concatenation
of curves in \(\ol D\). 
Let $\SS$ be the circle with radius $1$ centered at the origin; it will be convenient to identify $\SS$ 
with
$\{e^{2\pi iu}, 0\leq u < 1\}$.
Let $\Lp$ be the family of all loops $\lp$ in $\ol D$ with finite length, 
i.e., $\lp: \SS\to \ol D$ is a continuous mapping, with $\lp(\SS)$ being 
rectifiable with length $\ell_\lp <\infty$.
We will reparametrize $\lp$ by its length measured from a base point \(\lp(0)\), i.e.,
$\lp = \{\lp(t): t\in[0,\ell_\lp)\}$ such that, for every $s\in[0,\ell_\lp)$, the length
of $\{\lp(t): t\in[0,s]\}$ is $s$.
Accordingly, we may view any loop \(\lp\in\Lp\) as a Lipschitz closed curve with Lipschitz constant \(1\).
The same conventions about parametrization by length
will apply to other rectifiable curves that are not necessarily loops.
For convenience, we will sometimes abuse notation by writing $\lp$ instead of $\lp(\SS)$, for example, 
writing $\lp \subset D$.
For $\lp\in \Lp$, we define the Euclidean tubular neighbourhood \(\ball(\lp,r)\) of \(\lp\) by $\ball(\lp,r) = \{z\in \ol D: \dist(z, \lp) < r\}$.
(Recall that \(\ball(z,r)\) 
denotes the open Euclidean ball of radius \(r\) centered on \(z\).)

The \emph{concatenation} \(f*g\) of curves
\(f:[0,T]\to\Reals^d\) and \(g:[0,S]\to\Reals^d\),
with \(f(T)=g(0)\), is the curve \(f*g : [0,T+S]\to\Reals^d\),
\[
(f*g)(u)\quad=\quad\
\begin{cases}
 f(u) & \text{ if } 0\leq u\leq T\,,\\
 g(u-T) & \text{ if } T < u \leq T+S\,.
\end{cases} 
\]
We write \(f^{-1}\) for the reversed curve \(t\mapsto f(T-t)\).
If \(f(0)=f(T)\), then we write \(f^{*n}\) for the \(n\)-fold concatenation of \(f\) with itself, for \(n=1, 2, \ldots\); 
in particular,
for a  loop $\lp \in \Lp$ and \(n\) a positive integer, the \(n\)-fold \emph{concatenation power} $\lp^{*n}\in \Lp$ satisfies the conditions $\ell_{\lp^{*n}} = n\ell_\lp$ 
and $\lp^{*n} (t)= \lp(t\!\!\mod\ell_\lp)$ for $t\in[0,n\ell_\lp)$.
If \(n=-m\) is negative, then we define $\lp^{*n}\in \Lp$ to be the reversal of \(\lp^{*m}\).




The intrinsic Hausdorff distance between $A,B\subset \Reals^d$ is defined by
\begin{align*}
d_H(A,B) = \max \left\{
\sup_{v\in A} \inf_{z\in B} \idist(v,z)\,,\,\,\,
\sup_{v\in B} \inf_{z\in A} \idist(v,z)
\right\}.
\end{align*}
We will use intrinsic Hausdorff distance to measure distance between loops viewed as subsets of the closure \(\ol D\) of
the domain \(D\).

It will be important to identify instances in which loops can be contracted to points, 
to identify other instances
in which loops cannot be contracted at all, and to distinguish between weak contractions 
as opposed to contractions for which
contraction occurs at least at a uniform rate. (We consider only ESIC domains in order to avoid needing
to consider the kind of boundary issues illustrated by Examples \ref{ex:slit-disk}, \ref{ex:eroded-cube}.)
\begin{defn}\label{def:rubband}
Suppose $D\subset \Reals^d$, with $d\ge 2$.
\begin{itemize}
\item[(a)] 
A
loop $\lp\in\Lp$ is 
a \emph{contractible loop}
if there exists a length-monotonic homotopy of \(\lp\) with a point \(z\in \ol D\), namely,
a continuous mapping $H:\SS\times [0,1]\to \ol D$ such that
\begin{enumerate}
\item For every $\gamma \in[0,1)$, there exists $\lp_\gamma \in \Lp$ such that 
$H(e^{2\pi i t},\gamma)= \lp_\gamma(t\ell_{\lp_\gamma})$, for $t\in[0,1)$.
\item $\lp_0 = \lp$.
\item $H(\SS,1) = \lp_1 = \{z\}$ for the specified $z\in\ol D$.
\item The function $\gamma \to \ell_{\lp_\gamma}$ is non-increasing on $[0,1]$.
\end{enumerate}
We will identify $\lp(\gamma,t)$ with the family $\{K_\gamma\}_{\gamma \in [0,1)}$ 
and call it a \emph{contraction} of $\lp$.

\item[(b)] 
A contractible loop $\lp\in \Lp$ is 
\emph{well-contractible, with contractibility constant \(c\in(0,\infty)\)}, if 
there exists a length-monotonic homotopy
contraction $\{\lp_\gamma\}_{\gamma \in [0,1)}$ such that, for 
all $0\leq\gamma< \eta\leq 1$,
$$
\ell_{\lp_\gamma} - \ell_{\lp_\eta}
\quad\geq\quad c \;d_H(\lp_\gamma, \lp_\eta)\;\ell_{\lp_\gamma}.
$$
In words, this says that the homotopy can be chosen so that the relative rate of contraction is bounded away from zero when measured using the change in the Hausdorff distance.
Note that the contractibility constant $c$ may depend on the point $H(\SS,1)$ to which
the loop is contracted.

\item[(c)] 
A bounded ESIC domain \(D\) is a \emph{contractible loop (CL) domain} 
if there exists a constant \(c>0\) such that, for each $\lp \in \Lp$, there exists $z\in \ol D$ 
such that $\lp$ is well-contractible to $z$ with the contractibility constant \(c\).
(We can then also say that the loops in \(D\) are \emph{uniformly contractible}.)
\end{itemize}
\end{defn}
\begin{rem}\label{rem:CL-domain}
The definitions of contractible loops and well-contractible loops apply to loops in any set 
$D\subset\Reals^d$ but we will limit our considerations to ESIC domains $D$ because the behavior of such loops may be strange in non-ESIC domains.
\end{rem}

We introduce the following concepts when the loop length-functional is at a ``local minimum".
\begin{defn}\label{o19.1}
\begin{itemize}
 \item[(a)]  A loop $\lp\in\Lp$ is a \emph{semi-stable rubber band} if, 
for some $\eps>0$, the following holds: Suppose that $\lp_1\in \Lp$ and there exists 
a continuous mapping $H: \SS \times [0,1] \to \overline{\ball(\lp,\eps)}$ such that $H(e^{2\pi i t},0)= \lp(t\ell_\lp)$ for $t\in[0,1)$ and $H(e^{2\pi i t},1)=  \lp_1(t\ell_{\lp_1})$ for $t\in[0,1)$. 
Then $\ell_{\lp_1}\geq \ell_\lp$. 
\item[(b)]
A loop $\lp\in\Lp$ is 
a \emph{stable rubber band} if
it is semi-stable and if,
for some $\eps>0$ 
and all  $0<\eta\leq\eps$, there exists $\delta=\delta(\eta,\eps)>0$ 
such that the following holds:
Suppose that $\lp_1\in \Lp$, $d_H(\lp, \lp_1)\geq \eta$ and, for some $n\geq 1$, there exists 
a continuous mapping $H: \SS \times [0,1] \to \overline{\ball(\lp,\eps)}$ such that
$H(e^{2\pi i t},0)= \lp(tn\ell_\lp)$ for $t\in[0,1)$ and $H(e^{2\pi i t},1)= \lp_1(t\ell_{\lp_1})$ for $t\in[0,1)$.
Then $\ell_{\lp_1} >n\ell_\lp + \delta$.  
In words, if a concatenation power $\lp^{*n}$
of \(\lp\) with $n\ne0$ can be locally
perturbed to a loop $K_1$, 
then $K_1$ must be longer than $\lp^{*n}$ by at least an amount depending on the intrinsic Hausdorff distance between the two loops.
\end{itemize}
\end{defn}

As noted above, an ESIC domain \(D\) must be \(\CAT\kappa\), for some \(\kappa\geq0\).
We conclude this section with two lemmas that employ the \(\CAT\kappa\) property, followed 
by  a pair of remarks.
The first lemma shows that, in ESIC domains, any two rectifiable loops that 
are suitably close to each other
are also connected by a (not necessarily length-monotonic) local homotopy.
We adopt the convention that \(\pi/\sqrt{\kappa}=\infty\) if \(\kappa=0\),
in order to avoid needing to distinguish between \(\kappa=0\) and \(\kappa>0\).
\begin{lem}\label{lem:cat-kappa-homotopy}
 Let $D$ be an ESIC domain that is $\CAT\kappa$, with \(\kappa\geq0\).
Suppose that \(\lp_0\), \(\lp_1\) are rectifiable loops such that 
\[
 \idist(\lp_0(t \ell_{\lp_0}), \lp_1(t\ell_{\lp_1})) \quad\leq\quad \eps
\]
for all \(0\leq t\leq 1\), for some \(\eps<\pi/\sqrt{\kappa}\). Then \(\lp_0\) and \(\lp_1\) are homotopic within \(\ball(\lp_0,\eps)\).
\end{lem}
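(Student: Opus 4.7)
The natural approach is to construct the homotopy explicitly by stitching together the geodesic segments that the \(\CAT\kappa\) hypothesis provides between corresponding points on the two loops. Since, by hypothesis, \(\idist(\lp_0(t\ell_{\lp_0}),\lp_1(t\ell_{\lp_1}))\le\eps<\pi/\sqrt\kappa\), Definition \ref{def:CAT-definitions} guarantees, for each \(t\in[0,1]\), a (unique) geodesic \(\gamma_t\colon[0,1]\to\ol D\) parametrized proportionally to arc length and joining \(p_t=\lp_0(t\ell_{\lp_0})\) to \(q_t=\lp_1(t\ell_{\lp_1})\). The plan is then simply to define
\[
 H(e^{2\pi i t},s)\quad=\quad\gamma_t(s)\,,\qquad (t,s)\in[0,1]\times[0,1]\,,
\]
and to verify that this is a continuous homotopy from \(\lp_0\) to \(\lp_1\) whose image lies within \(\ol{\ball(\lp_0,\eps)}\).

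First I would verify the boundary conditions. By construction, \(H(e^{2\pi it},0)=p_t=\lp_0(t\ell_{\lp_0})\) and \(H(e^{2\pi it},1)=q_t=\lp_1(t\ell_{\lp_1})\), so the two horizontal edges of the square \(\SS\times[0,1]\) recover the parametrized loops. Consistency of \(H\) as a map out of \(\SS\) at \(t=0\) versus \(t=1\) follows because \(\lp_0\) and \(\lp_1\) are loops (\(p_0=p_1\), \(q_0=q_1\)), so \(\gamma_0=\gamma_1\) by uniqueness of geodesics of length below \(\pi/\sqrt\kappa\) in a \(\CAT\kappa\) domain.

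Next I would verify that the image lies in the required tubular neighbourhood. For each \(t\), every point \(\gamma_t(s)\) lies on a geodesic of length \(\idist(p_t,q_t)\le\eps\) starting at \(p_t\in\lp_0\), so
\[
 \dist(\gamma_t(s),\lp_0)\quad\leq\quad\dist(\gamma_t(s),p_t)\quad\leq\quad\idist(\gamma_t(s),p_t)\quad\leq\quad\idist(p_t,q_t)\quad\leq\quad\eps\,,
\]
using that the Euclidean distance is dominated by the intrinsic distance in \(\ol D\). Hence \(H\) takes values in \(\ol{\ball(\lp_0,\eps)}\), which we may replace by the open neighbourhood \(\ball(\lp_0,\eps')\) for any chosen \(\eps'>\eps\) still below \(\pi/\sqrt\kappa\), or just regard as the \(\eps\)-tube itself (the usage of \(\ball(\lp_0,\eps)\) in the statement is in this slightly relaxed sense).

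The main obstacle, and the step requiring the most care, is the joint continuity of \(H\). Continuity separately in \(s\) (for fixed \(t\)) is immediate, since each \(\gamma_t\) is a Lipschitz arc. What is needed is continuous dependence of the geodesic \(\gamma_t\) on its endpoints \((p_t,q_t)\). This is the standard consequence of the \(\CAT\kappa\) inequality: whenever the endpoints of two geodesics both lie at intrinsic distance below \(\pi/\sqrt\kappa\), comparison with the model triangle on the sphere of radius \(1/\sqrt\kappa\) (cf.\ Definition \ref{def:CAT-definitions}) gives a uniform modulus-of-continuity bound on \(\idist(\gamma_t(s),\gamma_{t'}(s))\) in terms of \(\idist(p_t,p_{t'})+\idist(q_t,q_{t'})\), hence a fortiori in Euclidean terms (this is essentially \citealp[\S II.1, Proposition 1.4]{BridsonHaefliger-1999}). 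Since \(t\mapsto p_t\) and \(t\mapsto q_t\) are continuous (in fact \(\ell_{\lp_i}\)-Lipschitz) by the length parametrization of the loops, joint continuity of \(H\) follows, which completes the construction of the required homotopy.
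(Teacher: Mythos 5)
Your proposal is correct and follows essentially the same route as the paper: both define the homotopy by joining corresponding points of the two loops with the unique short geodesics guaranteed by the $\CAT\kappa$ property, and both deduce continuity of the resulting map from the continuous dependence of such geodesics on their endpoints. Your additional checks (the boundary conditions, the containment in the $\eps$-tube, and the closed-versus-open tube caveat) are details the paper leaves implicit, but they do not change the argument.
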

\begin{proof}
First note that it follows from Definition \ref{def:CAT-definitions} that any geodesic of total length less than \(\pi/\sqrt{\kappa}\) is uniquely defined
by its end-points, is minimal, and depends continuously on its end-points. (This dependence is uniform in case the total length is bounded away from \(\pi/\sqrt{\kappa}\).)

We define the homotopy \(H:[0,1]^2\to\ball(\lp_0,\eps)\) by
\[
 H(s,t)\quad=\quad \gamma^{(t)}(s\ell_{\gamma^{(t)}})\,,
\]
where \(\gamma^{(t)}\) is the unit-speed geodesic from \(\lp_0(t \ell_{\lp_0})\) to \(\lp_1(t\ell_{\lp_1})\).
The continuity  of $H(\cdot,\cdot)$ follows directly from the properties in the first paragraph of the proof. 
\end{proof}

We can employ the previous lemma to show that a semi-stable rubber band is locally geodesic.

\begin{lem}\label{lem:locally-geodesic}
If $\lp$ is a semi-stable rubber band
in an ESIC domain $D$, 
then it is locally geodesic in the intrinsic distance metric.
\end{lem}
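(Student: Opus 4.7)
The plan is to argue by contradiction. Suppose $\lp$ is a semi-stable rubber band with parameter $\eps>0$, as in Definition \ref{o19.1}(a), but fails to be locally geodesic. Then there exist arc-length parameters $s_0<s_1$ in $[0,\ell_\lp)$, with $L:=s_1-s_0$ arbitrarily small, such that the sub-arc of $\lp$ from $p_0:=\lp(s_0)$ to $p_1:=\lp(s_1)$ satisfies $\idist(p_0,p_1)<L$. Since $D$ is $\CAT\kappa$ for some $\kappa\geq 0$ (as noted after Definition \ref{def:CAT-definitions}), provided $L<\pi/\sqrt{\kappa}$ there is a unique intrinsic geodesic $\gamma$ from $p_0$ to $p_1$ of length $\idist(p_0,p_1)$, depending continuously on its endpoints.

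Next, I would form a competitor loop $\lp_1$ by replacing the sub-arc of $\lp$ between $p_0$ and $p_1$ with $\gamma$, using a linear reparametrization over $[s_0,s_1]$ so that $\lp_1$ matches $\lp$ outside that interval; the resulting loop has length $\ell_\lp-L+\length(\gamma)<\ell_\lp$. The main step is to exhibit a continuous homotopy from $\lp$ to $\lp_1$ inside $\overline{\ball(\lp,\eps)}$. Using the common domain of parametrization, the maps $t\mapsto\lp(t)$ and $t\mapsto\lp_1(t)$ coincide outside $[s_0,s_1]$, while on $[s_0,s_1]$ both stay within intrinsic distance at most $L+\length(\gamma)\leq 2L$ of $p_0$, hence of each other. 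For $L$ sufficiently small that $2L<\pi/\sqrt{\kappa}$, the pointwise geodesic-interpolation construction of Lemma \ref{lem:cat-kappa-homotopy} furnishes a continuous homotopy $H$ each of whose fibres is either trivial (for $t$ outside $[s_0,s_1]$) or an intrinsic geodesic of length at most $2L$.

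To verify containment in $\overline{\ball(\lp,\eps)}$, recall that intrinsic distance on $\ol D$ dominates Euclidean distance, so every point of every interpolating geodesic lies at Euclidean distance at most $2L$ from the corresponding point $\lp(t)\in\lp$. Choosing $L<\eps/2$ at the outset therefore forces the homotopy image into $\overline{\ball(\lp,\eps)}$. This produces a loop $\lp_1\in\Lp$ with $\ell_{\lp_1}<\ell_\lp$ homotopic to $\lp$ within $\overline{\ball(\lp,\eps)}$, contradicting the semi-stability of $\lp$.

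The main technical obstacle is quantitative control of the homotopy: Lemma \ref{lem:cat-kappa-homotopy} already handles existence and continuity of the geodesic-interpolation construction, so the remaining task is the bookkeeping that ensures the interpolating geodesics stay within the prescribed Euclidean tubular neighbourhood. This rests on the $\CAT\kappa$ estimate on the length of the interpolating geodesics together with the ESIC comparability of intrinsic and Euclidean distances on $\ol D$ from \citet[Proposition 12]{BramsonBurdzyKendall-2011}.
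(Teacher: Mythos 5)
Your argument is correct and follows essentially the same route as the paper: replace a short sub-arc by the shorter intrinsic geodesic to create a loop $\lp_1$ of strictly smaller length, bound the pointwise intrinsic distance between the two parametrizations by a quantity controlled in terms of the arc length, invoke Lemma \ref{lem:cat-kappa-homotopy} to produce the local homotopy, and contradict semi-stability. The only differences are cosmetic (you track the parameter $L$ and impose $L<\eps/2$ at the end, where the paper works directly with $\eps/2$), and you omit the symmetric wrap-around case in which the sub-arc straddles the base point $\lp(0)$, which the paper handles by noting the argument is identical.
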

\begin{proof}
The loop \(\lp\) is locally geodesic in the intrinsic distance metric if, for some \(\eps>0\) 
and any \(0\leq s<t<\ell_\lp\), (i) when \(t-s<\eps/2\), then
\(\{\lp(v): s\leq v\leq t\}\) determines a length-minimizing intrinsic geodesic from \(\lp(s)\) to \(\lp(t)\) and (ii) when \((\ell_K -t) + (s-0) =\ell_\lp-t+s<\eps/2\), then
\(\{\lp(v): t\leq v<\ell_\lp\}\) followed by \(\{\lp(v): 0\leq v\leq s\}\) determines a length-minimizing intrinsic geodesic from \(\lp(t)\) to \(\lp(s)\).

We will demonstrate case (i); a similar argument holds for case (ii).
First note that $D$ must be $\CAT\kappa$ for some $\kappa>0$.  
Choose $\eps>0$ as in Definition \ref{o19.1}(a)
so that $\eps<\pi/\sqrt{\kappa}$. 
Suppose \(0\leq s<t<\ell_K\) and \(t-s<\eps/2\).  Then \(\lp(v)\in\ball(\lp(s),\eps/2)\) for \(s\leq v\leq t\), because \(\lp\) has Lipschitz constant \(1\).
Were \(\{\lp(v): s\leq v\leq t\}\) not length-minimizing, then it would be possible to replace this section of the loop by a strictly shorter segment,
thus producing a new loop \(\lp_1\) with strictly smaller total length. Moreover, 
by the triangle inequality, 
$\idist(\lp(v), \lp_1(v)) \leq \eps$ for $s\leq v \leq t$.
Since
 $\lp(v)= \lp_1(v)$ for $v\notin [s,t]$, we have
$\idist(\lp(v), \lp_1(v)) \leq \eps$ for all $v$.
Hence, by Lemma \ref{lem:cat-kappa-homotopy}, it follows that $\lp$ and $\lp_1$ are homotopic within 
$\ball(\lp,\eps)$. This contradicts the assertion that \(\lp\) is semi-stable, and therefore implies
that the segment \(\{\lp(v): s\leq v\leq t\}\) must be length-minimizing, and hence is a minimal geodesic.
\end{proof}

\begin{rem}\rm
At the intuitive level,
a rubber band is almost the same as a non-constant harmonic map from 
a circle to a closed set in the Euclidean space, or in other words a closed geodesic. 
However, the theory of harmonic maps does not seem to be 
relevant to our study. 
(The literature on harmonic maps is huge.
Succinct summaries of the general theory of smooth harmonic maps can be found in \citet{EellsLemaire-1978,EellsLemaire-1988};
see also the monograph by
\citet{harmonic}.
Non-smooth harmonic maps are discussed in \citet{EellsFuglede-2001}.)
\end{rem}

\begin{rem}\label{rem:stability}\rm
 Note that the property of \(\lp\) being a stable rubber band,
respectively a semi-stable rubber band,
in a domain \(D\) is \emph{local} to \(\lp\), in the sense that \(\lp\) remains stable, respectively semi-stable,
if the domain \(D\) is altered, as long as \(D\cap \ball(\lp,\eps)\) is not altered for some \(\eps > 0\).
\end{rem}


\section{Domains with stable rubber bands}\label{sec:stable}

In this section, we analyze domains that contain stable rubber bands.  In Definition \ref{LM2}, we 
formulate the Lion and Man problem, and specify
what it means for the Man to have a successful evasion strategy.  Theorem \ref{thm:srb}
is the main result of this section, where we will show that, for ESIC  domains containing a
stable rubber band, there is always a successful evasion strategy for the Man.  
The property that any ESIC domain is $\CAT\kappa$, for some $\kappa \ge 0$, will be
employed repeatedly.  

%
%

We begin by establishing a mathematical framework for
pursuit and evasion.
In Definition \ref{LM2}, the
path of the Man is represented by a continuous curve $y(t)$
and that of the Lion by a continuous curve $x(t)$. Here, $\Reals_+ = \{t\in \Reals: t\geq 0\}$ 
and $\CC = C(\Reals_+, \ol D)$ is the space of continuous functions 
on $\Reals_+$ with values in $\ol D$.

\begin{defn}\label{LM2}
Suppose that \(D\) is an ESIC domain.

(i) $\{x(t), t\geq 0\}$ is an \emph{admissible curve}
if it is continuous,
locally rectifiable and parametrized so that
$\idist(x(s),x(t)) \leq |s-t|$ for all $s,t\geq 0$.
Note that this implies $x'(t)$ exists for almost all $t\geq 0$ and $x(t) -
x(0) = \int_0^t x'(s) ds$ for every $t\geq 0$.

(ii) Let $\Lambda$ be the family of all quadruples $(x,
y, F_x, F_y)$ 
such that $x$ and $y$ are admissible curves and $x,
y, F_x $ and $F_y$
satisfy the following properties. The
functions $F_x: \Reals_+\times \CC^2 \to \Reals^d$ and
$F_y: \Reals_+\times\CC^2 \to \Reals^d$ are measurable and
such that $x'(t) = F_x(t,x(\,\cdot\,),
y(\,\cdot\,))$ for all $t$ where $x'(t)$ exists
and, similarly, $y'(t) = F_y(t,x(\,\cdot\,),
y(\,\cdot\,))$ for all $t$ where $y'(t)$ exists.
Moreover, $F_x$ and $F_y$ are non-anticipative in the
sense that, if $x, x^*,y,y^* \in \CC$, $x(s)
= x^*(s) $ and $y(s) = y^*(s) $ for $s\leq t$, then
$F_x(t,x(\,\cdot\,), y(\,\cdot\,))=
F_x(t,x^*(\,\cdot\,), y^*(\,\cdot\,))$; the analogous
condition is satisfied by $F_y$.

(iii) \emph{The Man has a successful evasion strategy} if, for some pair 
$x_0, y_0\in\ol D$,
(a) There exists $(x, y,
F_x, F_y)\in \Lambda$, with $x(0) = x_0$ and $y(0) = y_0$. 
(b) 
Suppose that 
$F_x$ and $x$, with $x(0) = x_0$, are such that 
there exist $y$ and $F_y$ with $y(0)=y_0$ and $(x, y,
F_x, F_y)\in \Lambda$.
Then 
there exist $y$ and $F_y$ such that $y(0)=y_0$, $(x, y,
F_x, F_y)\in \Lambda$
and the evasion condition $\inf_{t\in[0,\infty)}
\intrinsicdist(x(t),y(t))
>0$ holds.

(iv) Conversely,  \emph{there is no successful evasion strategy for
the Man} (or that \emph{the Lion can capture the Man)}
if, for each pair $x_0, y_0\in\ol D$, with $x_0\neq y_0$,  and every
$F_y$ and $y$ with $y(0) = y_0$, with at least one tuple $(x, y, F_x, F_y)\in
\Lambda$ satisfying $x(0) =
x_0$, there exist $x$ and $F_x$ with  $(x, y,
F_x, F_y)\in \Lambda$ and with $x(0) = x_0$, and satisfying
 $\inf_{t\in[0,\infty)}
\intrinsicdist(x(t),y(t)) =0$.
\end{defn}

\begin{rem}\rm
 Definition \ref{LM2} is stated in the context of ESIC domains that are open subsets of Euclidean spaces.
Note however that the concepts of Definition \ref{LM2} still make sense in the more general context of
\(\CAT\kappa\) metric spaces.
\end{rem}

\begin{rem}
 \rm
In contrast to the classical formulation given in \citet{Littlewood-1986}, we consider an
evasion strategy to fail if the Lion is able to approach arbitrarily close to the Man, even if the Lion does not catch the Man in finite time.
\end{rem}


\begin{rem}
\rm
%
(i) Assuming that $F_x,F_y, x_0$ and $y_0$ are
given, the conditions $x'(t) = F_x(t,x(\,\cdot\,),
y(\,\cdot\,))$ and $y'(t) = F_y(t,x(\,\cdot\,),
y(\,\cdot\,))$ specify a system of differential equations,
typically with 
right-hand sides that are discontinuous when viewed as time-varying vector fields. We do not make
any claims in general about existence or uniqueness of solutions to this
set of equations. It is trivial to
see that, for any $x_0$ and $y_0$, there exist $(x, y,
F_x, F_y)\in \Lambda$ satisfying $x(0) = x_0$ and $y(0)
= y_0$.  For example, $x$ and $y$ can be constant functions and
$F_x\equiv F_y\equiv 0$. 

(ii) For curves $x$ and $y$ that represent the Lion and Man, we will tacitly assume that if, for some $t$, $x(t) =y(t)$, then $x(s) = y(s)$ for all $s\geq t$.
\end{rem}

We introduce notation to represent pursuit games in which the 
Lion has a ``fixed path'' strategy that does not ``take into account" the strategy of the Man.
Such strategies provide a useful heuristic to understand the difference of roles for the Lion and
the Man in pursuit problems, but will not be directly employed in any of the proofs in the paper.
\begin{defn}
The set $\Lambda_0\subset \Lambda$ is the collection of all
$(x, y, F_x, F_y)\in \Lambda$ such that
$F_x(t,x(\,\cdot\,), y(\,\cdot\,))$ does not depend
on $y(\,\cdot\,)$.
\end{defn}






\begin{lem}
Suppose that $x_0$ and $y_0$ are given. The following conditions are equivalent.

(i) There exists $F_y$, with $(x, y, F_x,
F_y)\in \Lambda$ for some $x,y$ and $F_x$, and with
$x(0) = x_0$ and $y(0) = y_0$, such that
$\inf_{t\in[0, \infty)} \idist(x(t),y(t))
>0$ for every choice of $x$, $y$ and $F_x$ satisfying $x(0) = x_0$ and $y(0) = y_0$,    
and $(x, y, F_x, F_y)\in \Lambda$.

(ii) There exists $F_y$, with $(x, y, F_x,
F_y)\in \Lambda$ for some $x,y$ and $F_x$, and  with
$x(0) = x_0$ and $y(0) = y_0$, such that
$\inf_{t\in[0, \infty)} \idist(x(t),y(t))
>0$ for every choice of $x$, $y$ and $F_x$ satisfying $x(0) = x_0$ and $y(0) = y_0$,
and $(x, y, F_x, F_y)\in \Lambda_0$.  

\end{lem}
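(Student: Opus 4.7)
For the easy direction (i)$\Rightarrow$(ii), I would simply observe that $\Lambda_0 \subset \Lambda$, so any $F_y$ that forces $\inf_t \idist(x(t), y(t)) > 0$ across the whole of $\Lambda$ certainly does so across the smaller class $\Lambda_0$; the existence clause (``some $\Lambda$ tuple exists for this $F_y$'') is shared verbatim by the two conditions, so nothing further is needed.

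For the reverse direction, I would argue by contrapositive. Suppose some $F_y$ witnesses (ii) but fails (i); then there exists a tuple $(x, y, F_x, F_y) \in \Lambda$ with the prescribed initial data $x(0)=x_0$, $y(0)=y_0$ and with $\inf_t \idist(x(t), y(t)) = 0$. The plan is to replace the adaptive drift $F_x$ by a blind ``play-back'' drift that reproduces the realised Lion trajectory $x$ regardless of the Man's state, thereby manufacturing a $\Lambda_0$ counter-example to (ii). Concretely, I would define $\tilde F_x : \Reals_+ \times \CC^2 \to \Reals^d$ by setting $\tilde F_x(t, \zeta, \eta) = x'(t)$ wherever $x'(t)$ exists and $\tilde F_x(t, \zeta, \eta) = 0$ otherwise. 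Since $x$ is admissible, $x'$ exists almost everywhere and is Lebesgue measurable in $t$, so $\tilde F_x$ is measurable on $\Reals_+ \times \CC^2$; being constant in $(\zeta, \eta)$ it is trivially non-anticipative, and, crucially, it does not depend on $\eta = y(\cdot)$, placing it in the class that defines $\Lambda_0$.

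It then remains to verify that $(x, y, \tilde F_x, F_y) \in \Lambda_0$ and still violates the evasion condition. The admissibility of $x$ and $y$ is unchanged, the identity $x'(t) = \tilde F_x(t, x(\cdot), y(\cdot))$ holds by construction, and the identity $y'(t) = F_y(t, x(\cdot), y(\cdot))$ is inherited from the original tuple since neither $F_y$ nor the pair $(x,y)$ has been altered; together with the initial conditions, this places $(x, y, \tilde F_x, F_y)$ in $\Lambda_0$. The trajectories $x, y$ being unchanged, $\inf_t \idist(x(t), y(t)) = 0$ still holds, contradicting (ii). Conceptually, the argument formalises the intuition that because the Man's drift $F_y$ is non-anticipative, the Man cannot distinguish an adaptive Lion from a Lion committed in advance to replaying the same realised trajectory; the only technical point is the routine measurability of $x'$, so I do not anticipate any real obstacle.
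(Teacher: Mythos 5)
Your proposal is correct and follows essentially the same route as the paper: the forward direction is the inclusion $\Lambda_0\subset\Lambda$, and the reverse direction replaces the adaptive $F_x$ by a non-anticipative ``play-back'' drift reproducing the realised Lion path (the paper uses the left-derivative functional $\wh F_x(t,x,y)=\lim_{s\uparrow 0}(x(t+s)-x(t))/s$, which is the same idea as your $\tilde F_x$, merely written as a functional of the path argument rather than hard-coded). The only difference is your contrapositive phrasing versus the paper's direct argument, which is immaterial.
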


\begin{proof}
Since $\Lambda_0 \subset \Lambda$, (i) implies (ii).
Suppose that (ii) holds and consider any fixed $(x, y,
F_x, F_y)\in \Lambda$ satisfying $x(0) = x_0$
and $y(0) = y_0$. 
Let $\wh F_x(t, x,y) = \lim_{s\uparrow 0} (x(t+s) - x(t))/s $
if the limit exists and $\wh F_x(t, x,y) = 0$ otherwise. 
Since $x'(t)$ exists for almost all $t$,
$(x, y,\wh F_x, F_y)\in \Lambda_0$. Since $\liminf_{t\to \infty}
\idist(x(t),y(t))
>0$ is true for $(x, y, \wh F_x, F_y)$, it also holds for $(x,
y, F_x, F_y)$. This implies (i).
\end{proof}

Thus, the existence of a successful evasion strategy
does not depend on whether the Lion is ``intelligent". 
This may seem counterintuitive, so we offer a heuristic
explanation. The Lion may choose his strategy randomly
and may capture the Man by pure luck. The Man has to 
protect himself against all strategies, even those chosen
randomly. 

We note that our heuristic explanation
is just that---there is no randomness in the mathematical
model discussed in the lemma.
Moreover, one should be aware of the following subtle point: $\Lambda_0$ does not necessarily rigorously
correspond to the intuitive concept of the Lion choosing his strategy without regard to
the Man's position, since $F_x$ need not uniquely determine the Lion's path (due to
possible bifurcations of $x^{\prime} = F_x$).  

On the other hand a ``fixed path'' strategy for the Man 
may fail to successfully evade the Lion if the Lion is intelligent,
that is, if the Lion can
base his strategy $F_x$ on both $x$ and $y$. 
See Remark \ref{o23.6} 
at the end of the section.

We now turn to our main result on pursuit-evasion in this section.
Theorem \ref{thm:srb} states that the existence of a stable rubber band 
makes it possible for the Man to evade the Lion, 
as long as the Man starts on the rubber band and the Lion is initially a 
positive distance away from the Man.
Intuitively, this is plausible since the Man simply has to run away from the Lion  along the rubber band. The proof involves making this observation precise.
\begin{thm}\label{thm:srb}
Suppose that $D$ is an ESIC domain 
that contains a stable rubber band $\lp$.
Then there is a successful evasion strategy for the Man whenever the
starting positions $x_0, y_0\in\ol
D$ are such that $y_0 \in \lp$ and $x_0\ne y_0$.
\end{thm}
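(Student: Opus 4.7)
The plan is to have the Man remain on the stable rubber band \(\lp\) throughout and run along it at up to unit speed according to a feedback rule depending on the Lion's observed trajectory. Intuitively, the Man tracks the antipode (or some bounded-below signed offset) of the Lion's nearest-point projection onto \(\lp\) so as to maintain a macroscopic arc-length gap between Man and Lion-projection. The core of the argument is that if the Lion were somehow to come arbitrarily close to the Man, one would construct a rectifiable loop inside the tube \(\overline{\ball(\lp,\varepsilon)}\) whose length and winding would contradict the stable rubber band inequality of Definition \ref{o19.1}(b).

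Setup and strategy: Shrink \(\varepsilon > 0\) so that (i) it is valid in Definition \ref{o19.1}(b) for \(\lp\); (ii) \(\varepsilon < \pi/\sqrt{\kappa}\) where \(D\) is \(\CAT{\kappa}\) (via Corollary \ref{n7.2}); (iii) \(\overline{\ball(\lp,\varepsilon)} \subset D\); and (iv) the intrinsic nearest-point projection \(\pi \colon \ball(\lp,\varepsilon) \to \lp\) is well-defined and \(1\)-Lipschitz, which is available because \(\lp\) is locally geodesic (Lemma \ref{lem:locally-geodesic}) in a \(\CAT{\kappa}\) space with \(\varepsilon < \pi/\sqrt{\kappa}\). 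The Man's position is \(y(t) = \lp(\phi(t))\) for a Lipschitz-\(1\) function \(\phi\) with \(\phi(0) = 0\), adapting \(\phi'\) to the observed projection \(\theta_L(t) = \pi(x(t))\) whenever the Lion lies in \(\ball(\lp,\varepsilon/2)\), and holding \(\phi\) fixed otherwise (then \(\idist(x(t), y(t)) \geq \varepsilon/2\) automatically). The rule is designed to keep the signed arc-length gap \(g(t) = \phi(t) - \theta_L(t)\) bounded away (modulo \(\ell_\lp\)) from zero, with an initial direction chosen so as to flee the starting projection of \(x_0\) (or arbitrarily if \(x_0 \notin \ball(\lp,\varepsilon)\)).

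Contradiction and main obstacle: Assume for contradiction there exist times \(\tau_1 < \tau_2\) with \(\idist(x(\tau_i), y(\tau_i)) \leq \eta_i\) arbitrarily small, and, after restricting to a sub-interval, that the Lion's trajectory on \([\tau_1,\tau_2]\) lies in \(\overline{\ball(\lp,\varepsilon/2)}\). Form the closed loop \(\lp_{12}\) by concatenating (a) the Man's trajectory \(y|_{[\tau_1,\tau_2]}\) on \(\lp\); (b) a short intrinsic geodesic from \(y(\tau_2)\) to \(x(\tau_2)\); (c) the reversed Lion's trajectory \(x|_{[\tau_1,\tau_2]}^{-1}\); and (d) a short intrinsic geodesic from \(x(\tau_1)\) back to \(y(\tau_1)\). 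By Lemma \ref{lem:cat-kappa-homotopy}, \(\lp_{12}\) is freely homotopic inside \(\overline{\ball(\lp,\varepsilon)}\) to \(\lp^{*n}\), where \(n\) is the integer difference between Man's and Lion's signed windings over \([\tau_1,\tau_2]\). The principal technical obstacle is engineering the feedback so that \(n \neq 0\) for a suitable choice of \(\tau_1,\tau_2\), while simultaneously showing the length bound \(\ell_{\lp_{12}} \leq (\tau_2-\tau_1) + \ell_x + \eta_1 + \eta_2\) is strictly smaller than \(|n|\ell_\lp + \delta\), contradicting Definition \ref{o19.1}(b). The key asymmetry driving this estimate is that the Lion, although Lipschitz-\(1\) in the ambient metric, can achieve full unit angular speed on \(\lp\) only while it actually lies on \(\lp\); hence any radial excursion strictly reduces its contribution to the loop's winding while still consuming time. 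Additional subtlety arises from excursions of the Lion outside \(\ball(\lp,\varepsilon/2)\), which consume time but contribute nothing to winding, and must be dealt with by splitting \([\tau_1,\tau_2]\) at the tube's crossing times and composing partial loops.
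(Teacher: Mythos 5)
The central gap is the one you flag yourself but do not resolve: your loop \(\lp_{12}\) is built from a \emph{single} pursuit trajectory, and its homotopy class in \(\overline{\ball(\lp,\eps)}\) can well be trivial (\(n=0\)). Definition \ref{o19.1}(b) gives the length penalty only for \(n\geq 1\); when \(n=0\) neither stability nor semi-stability is available, so no contradiction follows. Moreover, your proposed feedback rule (track the antipode of the Lion's projection) makes this failure generic rather than exceptional: the nearest-point projection onto \(\lp\) is \(1\)-Lipschitz, so the Lion's projection can advance along \(\lp\) at unit speed, and the Man --- also at unit speed --- can at best \emph{hold} the arc-length gap, not grow it. If both the Man and the Lion's projection wind at the same rate, the resulting \(\lp_{12}\) is contractible inside the tube. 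The paper avoids the problem entirely by a different device: in part (i) the Man is assumed to be caught under \emph{both} the clockwise strategy (\(y\)) and the counter-clockwise strategy (\(\overline{y}\)), and the contradiction loop is \(\lp'=\Gamma_1*\Gamma_2*\Gamma_3*\overline{\Gamma}_3^{-1}*\overline{\Gamma}_2^{-1}*\overline{\Gamma}_1^{-1}\), the concatenation of both hot-pursuit halves; its winding is \(n+\overline n\geq 2\), so the stability inequality applies automatically. This shows at least one fixed direction works for the whole hot-pursuit episode --- no dynamic feedback is needed. In part (ii), where the Lion begins already close to \(\lp\), a single direction suffices precisely because the initial offset \(u\geq \eps/3\) between the Lion's projection and \(y(0)\) supplies the length deficit, and a separate loop \(\Gamma_1*\Gamma_2*\Gamma_3*\Gamma_4\) encircling \(\lp\) an integer \(m\geq1\) of times is used.

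The other asymmetry you invoke --- that a radial excursion of the Lion strictly slows its angular progress --- is the wrong mechanism and is not used in the paper. It is true in Euclidean round tubes but is not a general property of \(\CAT\kappa\) tubular neighbourhoods of closed local geodesics, and in any case \(1\)-Lipschitz projection only bounds the Lion's angular speed above by \(1\), not strictly below \(1\). The actual length penalty in the paper comes from the stability hypothesis itself: because \(x(0)\) lies at intrinsic distance \(\geq\eps/6\) from \(\lp\), one has \(d_H(\lp,\lp')\geq\eps/6\), and Definition \ref{o19.1}(b) with \(\eta=\eps/6\) forces \(\ell_{\lp'}>(n+\overline n)\ell_\lp+\delta\); the cheap geodesic connectors (\(\leq\eta/3\) each) then yield the contradiction. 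Finally, you also implicitly use the nearest-point projection onto \(\lp\) being globally well-defined and \(1\)-Lipschitz on the whole tube; this is not obvious for a closed local geodesic in a \(\CAT\kappa\) space and is not needed in the paper --- only short geodesic connectors (controlled by Lemma \ref{lem:cat-kappa-homotopy} with \(\eps<\pi/\sqrt{\kappa}\)) and the up-crossing/rest decomposition you gesture at are used. Your rest-period handling is on the right lines, but the core episode argument needs replacing by the two-direction construction.
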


\begin{proof}
We will show there is a \(\alpha>0\), depending on \(\idist(x(0),y(0))>0\), such that,
no matter what strategy is adopted by the Lion, the Man can choose a strategy to ensure that
\(\idist(x(t),y(t))>\alpha\) for all \(t\geq0\).

Suppose that $\lp$ is a stable rubber band and Definition \ref{o19.1}(b) is satisfied
for some $\eps >0$ and function $\delta(\eta,\eps)$. 
It is evident from Definition \ref{o19.1}(b) that $\delta(\eta,\eps)$ may be chosen to be non-decreasing in $\eps$ for $\eps\geq\eta$.

Assume that $\lp$ is a stable rubber band, $y_0 \in \lp$ and $x_0\ne y_0$.
Let $\eps>0$ and $\delta(\eta,\eps)$ be such that Definition \ref{o19.1}(b)
is satisfied. We decrease $\eps$, if necessary,  so that 
\(\idist(x_0,y_0) \geq \eps/2\).

Since $D$ is ESIC, it is also \(\CAT\kappa\) for some \(\kappa\geq0\).
To ensure the global geometry of \(D\) does not interfere, we 
decrease $\eps>0$ further, if necessary, so that \(\eps<\pi/\sqrt{\kappa}\).

The essence of the argument involves the notion of \emph{hot pursuit} --
for a fixed \(\eps>0\),
we say that \emph{the Lion \(x\) is in \(\eps\)-hot pursuit of the Man \(y\) over the time interval \([T_0,T_1]\)} if
\[
 \idist(x(t),y(t)) \quad\leq\quad \eps \qquad \text{ for } T_0\leq t\leq T_1\,.
\]
We shall show that a Man can always evade a Lion in hot pursuit by running in a judiciously chosen direction along \(\lp\).
On the other hand, the Lion gains nothing by desisting from hot pursuit for a while, since an ``up-crossing argument" applied to \(\idist(x,y)\)
shows that the Man can deal with such variations simply by taking rest-periods in the intervals 
$[s,t]$ satisfying \(\idist(x(s),y(s))\geq \eps\) and \(\idist(x(u),y(u))\geq \eps/2\) for $u\in [s,t]$.

\textbf{(i)} Consider first the situation in which the Lion \(x\) begins at location \(x(0)\), at intrinsic distance at least \(\eps/6\) from \(K\) and at most \(\eps\) from the Man, who begins at
\(y(0)\in\lp\). 
Without loss of generality, we suppose \(y(0)=\lp(0)\).
Choose \(\delta=\delta(\eps/6,\eps)\) as required in Definition \ref{o19.1}(b), and set \(\eta=\min\{\delta, \eps/2\}\).
The Man 
has a choice
between running ``clockwise'' (\(y(t)=\lp(t)\))
and ``counter-clockwise'' (\(\overline{y}(t)=\lp(-t)\)). We argue that, for at least one of these strategies, the Man can remain at least
 distance \(\eta/3\) from the Lion as long as the Lion continues in \(\eps\)-hot pursuit.

Arguing by contradiction,
suppose that the Lion can use \(\eps\)-hot pursuit to come within \(\eta/3\) of the Man, whichever of the two strategies is adopted by the Man.
Let \(t_c\) and \(\overline{t}_c\) be the two times at which this \(\eta/3\)-capture occurs. Note that it is possible for either or both of \(t_c\), \(\overline{t}_c\) to
exceed the length of the loop \(\lp\); the chase may encircle \(\lp\) several times.

Define non-negative integers \(n\) and \(\overline{n}\) by
\begin{align*}
 (n-1) \ell_\lp            \quad<\quad & t_c            \quad\leq\quad n \ell_K\,,\\
 (\overline{n}-1) \ell_\lp \quad<\quad & \overline{t}_c \quad\leq\quad \overline{n} \ell_K\,,
\end{align*}
and determine rectifiable paths by using the two \(\eps\)-hot pursuits $x$ and $\overline{x}$ of the Lion:
\begin{align*}
 \Gamma_1            \quad&=\quad    x |_{[0,t_c]} \,,         \\
 \overline{\Gamma}_1 \quad&=\quad    \overline{x} |_{[0,\overline{t}_c]} \,,         \\
 \Gamma_2            \quad&=\quad    \text{minimal geodesic from } x(t_c) \text{ to } \lp(t_c) \,,         \\
 \overline{\Gamma}_2 \quad&=\quad    \text{minimal geodesic from } \overline{x}(\overline{t}_c) \text{ to } \lp(-\overline{t}_c) \,,         \\
 \Gamma_3            \quad&=\quad    \text{arc of } \lp \text{ from } \lp(t_c) \text{ to } \lp(n \ell_K)=\lp(0) \,,          \\
 \overline{\Gamma}_3 \quad&=\quad    \text{arc of } \lp \text{ from } \lp(-\overline{t}_c) \text{ to } \lp(-\overline{n} \ell_K)=\lp(0) \,.        
\end{align*}
Here, \(\Gamma_3\), \(\overline{\Gamma}_3\) are defined by continuing the same direction of travel along \(\lp\) as given by \(y\), \(\overline{y}\) respectively.
The construction of \(\Gamma_1\), \(\Gamma_2\), and \(\Gamma_3\) is illustrated in Figure \ref{fig:stable-rb1}.
\begin{figure}[thbp]
\centering
\includegraphicsKB[width=3in]{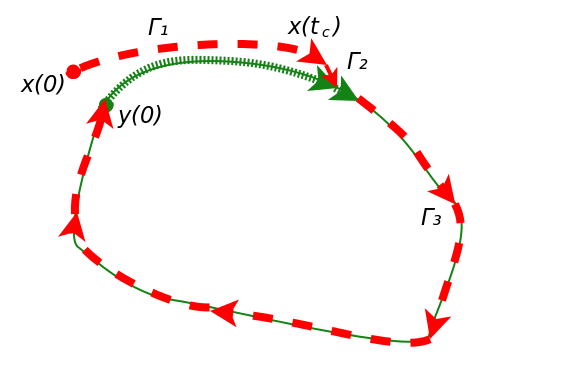}
\caption{One half of the ``hot-pursuit'' loop $\lp'$ from part (i).
}
\label{fig:stable-rb1}
\end{figure}

One can extend the pursuit by the Lion of the Man past time \(t_c\), respectively time \(\ol t_c\), depending on the strategy adopted by the Man,
along the concatenated paths \(\Gamma_1*\Gamma_2*\Gamma_3\), respectively \(\overline{\Gamma}_1*\overline{\Gamma}_2*\overline{\Gamma}_3\),
with both the Lion and the Man moving at unit speed along the extensions. 
Since the Lion is within distance \(\eta/3\le \eps\) of the Man at time \(t_c\), respectively \(\ol t_c\), 
it follows that the paths \(\Gamma_1*\Gamma_2*\Gamma_3\) and \(\overline{\Gamma}_1*\overline{\Gamma}_2*\overline{\Gamma}_3\) are both
\(\eps\)-hot pursuits of \(y\), \(\overline{y}\). We will consider the rectifiable loop running from \(x(0)=\overline{x}(0)\) back to itself, given by
\[
 \lp' \quad=\quad \Gamma_1*\Gamma_2*\Gamma_3*
\overline{\Gamma}_3^{\ -1}*
\overline{\Gamma}_2^{\ -1}*\overline{\Gamma}_1^{\ -1}\,.
\]

The \(\eps\)-hot pursuit property implies that \(\lp'\) lies in \(\ol{\ball(\lp,\eps)}\) and that one can construct a mapping $H$ as in Definition \ref{o19.1} (b). 
Moreover
\(d_H(\lp,\lp')\geq \idist(x(0),\lp)\geq\eps/6\). Finally,
\begin{multline*}
 \ell_{\lp'} \quad\le\quad
t_c + \eta/3 + (n\ell_\lp - t_c)
+
\overline{t}_c + \eta/3 + (\overline{n}\ell_\lp - \overline{t}_c)\\
\quad=\quad (n+\overline{n})\ell_\lp + 2\eta/3
\quad\leq\quad (n+\overline{n})\ell_\lp + \delta\,,
\end{multline*}
which violates the stability of the rubber band \(\lp\). This 
contradiction shows that if the Lion starts from distance at least \(\eps/6\) from \(\lp\), and remains in hot pursuit
of the Man, then the Man can choose a clockwise or counterclockwise strategy so as to always remain at least distance \(\eta/3\) away from the Lion.

\textbf{(ii)} Now consider the situation in which the Lion starts at \(x(0)\) that is less than \(\eps/6\) from \(\lp\) but greater than or equal to \(\eps/2\) and less than \(\eps\) from the 
Man's starting point \(y(0)=\lp(0)\). 
Let $z$ denote the closest point to \(x(0)\) on \(\lp\), and suppose that $z=K(-u)$ for some $u>0$
(the case $u<0$ can be dealt with in an analogous way).
By the triangle inequality, \(u \geq \eps/3 \). 

Let the Man adopt the strategy \(y(t)=\lp(t)\) and consider the Lion in \(\eps\)-hot pursuit of the Man.
Suppose the Lion comes within distance \(\eta/3\) of the Man at time \(t_h\). Define 
\begin{align*}
 \Gamma_1 \quad&=\quad x|_{[0,t_h]}\,, \\
 \Gamma_2 \quad&=\quad \text{minimal geodesic from } x(t_h) \text{ to } y(t_h)\,,\\
 \Gamma_3 \quad&=\quad y|_{[t_h, m\ell_\lp - u]}\,,\\
 \Gamma_4 \quad&=\quad \text{minimal geodesic from }z=\lp(-u) \text{ to } x(0)\,,
\end{align*}
where \(m\) is the integer satisfying
\[
 (m-1)\ell_\lp - u \quad<\quad t_h \quad\leq\quad m \ell_\lp - u\,.
\]
Consider the rectifiable loop \(\lp'=\Gamma_1*\Gamma_2*\Gamma_3*\Gamma_4\) based at \(x(0)\). 
The construction of \(\Gamma_1\), \(\Gamma_2\), \(\Gamma_3\), and \(\Gamma_4\) is illustrated in Figure \ref{fig:stable-rb2}.
\begin{figure}[thbp]
\centering
\includegraphicsKB[width=3in]{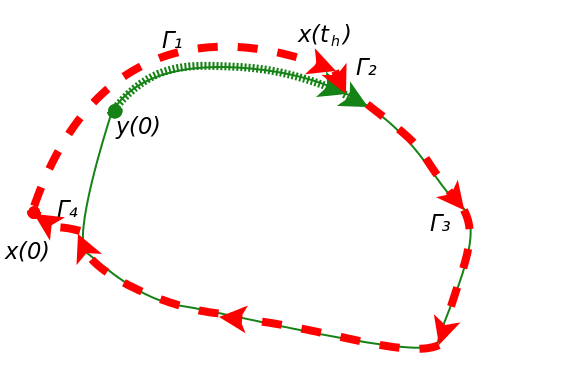}
\caption{The ``hot-pursuit'' loop from part (ii).
}
\label{fig:stable-rb2}
\end{figure}
Arguing as in (i), because of the hot pursuit by the Lion,
it follows that \(\lp'\) and \(\lp\) are homotopic within \(\ol{\ball(\lp,\eps)}\).
Once again, we choose \(\delta=\delta(\eps/6,\eps)\) as required in Definition \ref{o19.1}(b),
but we  now set \(\eta=\min\{\delta, \eps/2\}\).
 A simple computation of lengths
shows that \(\lp'\) violates the stable rubber band property of \(\lp\):
\[
 \ell_{\lp'} \quad\le\quad
t_h + \eta/3 + m\ell_\lp -u - t_h + \eps/6
\quad=\quad m \ell_\lp - (u - \eps/6 - \eta/3) \quad\leq\quad m \ell_\lp\,.
\]
So,  the Lion cannot come within  distance \(\eta/3\) of the Man if it remains in \(\eps\)-hot pursuit.

In order to complete the proof, we now spell out the up-crossing argument that was mentioned earlier. 
Suppose that $\intrinsicdist(x_0, y_0) \geq \eps >0$. 
The Man chooses to rest
until the Lion is distance \(\eps/2\) from him. We need to consider two cases.

If the Lion is within \(\eps/6\) of \(\lp\), then the Man moves in the appropriate direction given by (ii) above.
We have seen in (ii) that the Lion cannot come within  distance \(\eta/3\) of the Man while maintaining \(\eps\)-hot pursuit. 
Since neither the Lion nor the Man can travel faster than at unit speed, at least time \(\frac{1}{2}\times(\eps-\eps/2)=\eps/4\) must elapse
before the Lion and the Man are separated by \(\eps\), at which time the Man rests again.

If instead, the Lion is further than \(\eps/6\) from \(\lp\), then the Man moves in the escape direction guaranteed by (i) above. 
We have seen in (i) that the Lion cannot come within distance \(\eta/3\) of the Man while it maintains \(\eps\)-hot pursuit. Once more, 
at least time \(\frac{1}{2}\times(\eps-\eps/2)=\eps/4\) must elapse
before the Lion and the Man are separated by \(\eps\), at which time the Man rests again.

Over any finite time interval \([0,T]\), there can be at most \(1+4T/\eps\) separate periods of \(\eps\)-hot pursuit
and \(4T/\eps\) separate periods of rest and, in each of these periods, the Lion and the Man remain separated in intrinsic distance
by at least \(\min\{\eps/2,\eta/3\}=\eta/3\). Accordingly, this separation holds for all time, and therefore the Man will successfully evade the Lion.
\end{proof}

A crucial part of the above proof was the decomposition of an arbitrary pursuit strategy into alternating periods of \(\eps\)-hot pursuit
and of pursuit at a further distance. There is a related but more stringent notion of 
\emph{simple pursuit}, which will be employed in the next section.
The following definition
is adapted from \citet[Section 6]{AlexanderBishopGhrist-2009}.
\begin{defn}\label{o23.3}
We
will call $(x,y)$ a \emph{simple pursuit} if $x$
and $y$ are admissible curves, there exists a unique intrinsic geodesic 
between $x(t)$ and $y(t)$ for all $t\geq 0$, and $x'(t)$ exists for
almost all $t\geq 0$, with $|x'(t)| = 1$ for $x(t)\neq y(t)$  
and $x'(t)$ pointing towards $y(t)$ along the geodesic between $x(t)$ and $y(t)$. When $x(t_0)=y(t_0)$ for some $t_0$, 
we assume that $x(t)=y(t)$ for $t\ge t_0$.  We will
write $\Lambda_s$ to denote the family of all simple pursuits
$(x,y)$.
\end{defn}

\begin{rem}\label{o23.6.i}
\rm
Note Definition \ref{o23.3} does not assert that, for 
every $v,z \in \ol D$, there exists a unique geodesic between $v$ and $z$. This may 
not
be true for distant pairs of points.
\end{rem}

\begin{rem}
\rm
Simple pursuit can be viewed as a greedy solution to the pursuit problem (in the language of algorithm theory); it is therefore 
described as the ``greedy pursuit strategy'' in \citet{BramsonBurdzyKendall-2011}.
\end{rem}

The notion of simple pursuit is easy to illustrate in the presence of stable rubber bands, albeit in a rather elementary way.
Recall that a stable rubber band is a local geodesic (Lemma \ref{lem:locally-geodesic}).
As a consequence,
if a domain $D$ has a stable rubber band, then there exists
a simple pursuit $(x,y)$ in which the Man evades the
Lion. 
Namely, choose any $x_0,y_0\in \lp$ with
$\idist(x_0,y_0) = \eps/2$ and let $y(t)$ move
away from $x(t)$
along $\lp$ at the constant speed 1, starting from $y_0$.
Let $x(t)$ follow $y(t)$ along $\lp$ at the constant
speed 1 as well. The distance between $x(t)$ and $y(t)$
will always be $\eps/2$. Theorem \ref{thm:srb} establishes a
considerably stronger version of this fact, not limited to simple pursuit.

\begin{rem}\label{o23.6}
\rm
For some starting positions of the Lion and the Man, and some (possibly  foolish) strategies
$y(t)$ of the Man, it is evident that the Man will not evade the Lion under simple pursuit, whatever the geometry of the domain. 
For example, if $y(0)$ is in the interior of $D$
and the Man adopts the ``resting'' strategy $y(t) \equiv y_0$,
then simple pursuit from any starting point $x(0)$ close enough to 
$y(0)$ leads to capture of the Man in finite time.
\end{rem}

\section{Simple pursuit in CL domains}\label{sec:contr}


The main result of this section is Theorem \ref{thmo13}, which
shows that, if the Lion adopts an appropriate pursuit strategy, then the 
Man cannot successfully evade the Lion in
a CL domain.  We recall that since, by definition, a CL domain $D$ is ESIC, it
is also \(\CAT{\kappa}\) for some $\kappa > 0$.
Also, since the scaling $D \to \sqrt{\kappa} D$ transforms a $\CAT{\kappa}$ 
domain, $\kappa >0$, into a $\CAT{1}$ domain, it suffices to state our arguments for 
CL domains that are $\CAT1$.
(The \(\CAT0\) case is covered by the results of \citet{AlexanderBishopGhrist-2006}, where the notion 
of CL domains is not required; also note that $\CAT0$ domains are automatically $\CAT{\kappa}$, for any $\kappa >0$.)


Our strategy will be to show that if the Lion and the Man are initially close, specifically,
$\idist(x(0) , y(0)) < \pi$, then there is no successful evasion strategy by the Man
 if the Lion adopts simple pursuit (in the sense of Definition \ref{LM2}).   In particular,
we will show that, for every admissible
curve $y(\,\cdot\,)$, there exists $x(\,\cdot\,)$ such
that $(x,y)$ is a simple pursuit and $\lim_{t\to
\infty} \idist(x(t) , y(t)) =0$.  The general case, with arbitrary $x(0)$ and $y(0)$, will follow
quickly from this by constructing a chain of points in $D$ from $x(0)$ to $y(0)$, each of which
is less than distance $\pi$ from its neighbors, and applying simple pursuit at each step.

We begin by introducing a number of geometrical results that will be required in order to establish Theorem \ref{thmo13}.
To start with, we require the following general proposition from \citet[Theorem 20]{AlexanderBishopGhrist-2009}.
\begin{prop}\label{o26.40}
Suppose that $\ol D$ is a closed $\CAT{1}$ space, $x_0,y_0\in
\ol D$, $\idist(x_0,y_0)< \pi$ and $\{y(t), t\geq
0\}$ is an admissible curve. Then there exists a unique
admissible curve $\{x(t), t\geq 0\}$ such that
$(x,y)$ is a simple pursuit.
\end{prop}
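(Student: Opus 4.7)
My plan is to construct $x(\cdot)$ as a uniform limit of discrete Euler-type approximations and to use $\CAT{1}$ angle comparison to verify both the pursuit equation and uniqueness. Fix $n \ge 1$ and define $x_n$ piecewise as follows: on each time interval $[k/n, (k+1)/n]$, let $x_n$ travel at unit speed along the minimal geodesic from $x_n(k/n)$ toward $y(k/n)$, with $x_n(0) = x_0$. Provided the distance to $y$ stays less than $\pi$, the $\CAT{1}$ property guarantees that the relevant geodesics are unique and depend continuously on their endpoints, so the construction is well-defined at every step.

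The key a priori estimate is that $\idist(x_n(t), y(t)) \le \idist(x_0, y_0) < \pi$ uniformly in $n$ and $t$. Over a single step $[k/n, (k+1)/n]$, the triangle inequality together with the facts that $x_n$ approaches $y(k/n)$ at unit speed and that $y$ moves at speed at most $1$ yields
\[
\idist(x_n(t), y(t)) \;\le\; \idist(x_n(t), y(k/n)) + \idist(y(k/n), y(t)) \;\le\; \idist(x_n(k/n), y(k/n)),
\]
and the uniform bound follows by induction on $k$. Each $x_n$ is $1$-Lipschitz, so by Arzel\`a--Ascoli there is a subsequential uniform limit $x$, which is an admissible curve. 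To verify that $(x, y)$ is a simple pursuit, one checks that for almost every $t$ the derivative $x'(t)$ equals the initial unit tangent of the minimal geodesic from $x(t)$ to $y(t)$. This rests on the continuity, on the open set $\{(p,q) : \idist(p,q) < \pi\}$, of the map sending a pair of points to the initial direction of the joining geodesic, a standard consequence of $\CAT{1}$ comparison.

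For uniqueness, I would compare two simple pursuits $x_1$, $x_2$ with common initial point $x_0$ and common Man trajectory $y$. Set $d(t) = \idist(x_1(t), x_2(t))$. Alexandrov angle comparison applied to the triangle with vertices $x_1(t), x_2(t), y(t)$, together with the standing bound $\idist(x_i(t), y(t)) \le \idist(x_0, y_0) < \pi$, controls the angle at $y(t)$ between the two geodesics $[y(t), x_i(t)]$ in terms of $d(t)$, and hence bounds the angle between the two velocity vectors $x_i'(t)$ by $C\, d(t)$ for a constant $C$ depending only on $\pi - \idist(x_0, y_0)$. This yields a Gr\"onwall-type inequality $\tfrac{d^+}{dt} d(t) \le C\, d(t)$; since $d(0) = 0$, we conclude $d \equiv 0$.

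The principal obstacle is the lack of smoothness of the pursuit vector field: the rule ``from $p$, head along the geodesic toward $q$ at unit speed'' is not a Lipschitz vector field in any classical sense, so standard ODE theory is unavailable. Both existence (as a passage-to-the-limit argument) and uniqueness (as a Gr\"onwall argument) therefore rely on quantitative $\CAT{1}$ continuity estimates for geodesics, valid uniformly in the regime $\idist(p,q) \le \idist(x_0, y_0) < \pi$. A secondary concern is ruling out the Lion--Man distance reaching $\pi$ along the approximation scheme, where uniqueness of minimal geodesics can fail; this is precisely what the monotonicity estimate in the second paragraph delivers.
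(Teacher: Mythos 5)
First, a point of comparison: the paper does not actually prove Proposition \ref{o26.40} --- it is quoted from \citet[Theorem 20]{AlexanderBishopGhrist-2009} --- so there is no in-paper argument to measure your proof against. Your Euler-scheme construction, the monotone a priori bound \(\idist(x_n(t),y(t))\le\idist(x_0,y_0)<\pi\), and the Arzel\`a--Ascoli passage to the limit are essentially the standard route in that reference, and the existence half of your argument is sound in the paper's intended setting (where \(\ol D\subset\Reals^d\) and continuity of the direction field \(\chi(v,z)\) on \(\{0<\idist(v,z)<\pi\}\) is Proposition \ref{o26.12}). You should still say a word about capture times, since that continuity is uniform only on compact subsets of \(\{0<\idist<\pi\}\) and degenerates as the two arguments coalesce, and about what ``initial unit tangent'' and the integral representation of \(x\) mean if one wants the statement for an abstract closed \(\CAT{1}\) space rather than for \(\ol D\subset\Reals^d\).

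The genuine gap is in the uniqueness step. The quantity controlling \(\tfrac{d^+}{dt}\idist(x_1(t),x_2(t))\) via the first variation inequality is not the angle at \(y(t)\) between the geodesics from \(y(t)\) to the \(x_i(t)\), nor the extrinsic angle between the velocity vectors \(x_1'(t)\) and \(x_2'(t)\) (which are based at different points); it is the pair of angles \(\alpha_i\) at \(x_i(t)\) between the direction of motion toward \(y(t)\) and the connecting geodesic from \(x_1(t)\) to \(x_2(t)\), with \(\tfrac{d^+}{dt}d\le-\cos\alpha_1-\cos\alpha_2\). Controlling the angle at \(y(t)\) says nothing about the tangents at the far ends of those geodesics in a curved space. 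Moreover, the constant you claim --- depending only on \(\pi-\idist(x_0,y_0)\) --- cannot hold for the bound you state: the comparison angle at \(\widetilde{y}\) is of order \(d(t)/\min_i\sin\idist(x_i(t),y(t))\), which blows up as the Lion approaches the Man, and nothing rules out \(\idist(x_i(t),y(t))\to0\) (that is the intended outcome of the pursuit). The repair is to bound \(\alpha_1+\alpha_2\) by \(\pi\) plus the spherical excess of the comparison triangle: by L'Huilier's formula the excess is \(O(d(t))\) with a constant depending only on \(\idist(x_0,y_0)<\pi\), uniformly as \(\idist(x_i(t),y(t))\to0\), and since \(-\cos\alpha_1-\cos\alpha_2\le(\alpha_1+\alpha_2-\pi)^+\) this yields the Gr\"onwall inequality \(\tfrac{d^+}{dt}d\le C\,d\) you want. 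With that substitution (and a separate remark covering times after capture), the uniqueness argument closes.
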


 We also need
the following general geometric observation from
\citet[Proposition 23]{AlexanderBishopGhrist-2009}.
\begin{prop}\label{prop:ABG}
 Let \(\ol D\) be a compact \(\CAT1\) space. If there is a successful evasion strategy for the Man whenever
the Man is initially separated from the Lion by a distance
of less than \(\pi\), then there exists a bilaterally infinite local geodesic in $\ol D$.
\end{prop}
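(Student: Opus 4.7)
The plan is to produce a bilaterally infinite local geodesic in \(\ol D\) as a compactness limit of trails arising from a simple pursuit in which the Man successfully evades the Lion.

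First I would use Proposition \ref{o26.40} to pin down a specific Lion strategy. For any \(x_0,y_0\in\ol D\) with \(0<\idist(x_0,y_0)<\pi\), simple pursuit assigns to every admissible Man's trajectory \(y\) starting at \(y_0\) a unique admissible Lion's trajectory \(x\) starting at \(x_0\); this defines a non-anticipative Lion strategy \(F_x^{\mathrm{sp}}\). The hypothesis of successful evasion then supplies an admissible \(y\) for which the resulting simple pursuit \((x,y)\) satisfies
\[
\inf_{t\ge 0}\idist(x(t),y(t))\;\ge\;\delta
\]
for some \(\delta>0\). Under simple pursuit \(t\mapsto\idist(x(t),y(t))\) is non-increasing, so it remains in \([\delta,\idist(x_0,y_0)]\subset(0,\pi)\) for all time, keeping us within the CAT(1) regime where geodesics are unique and vary continuously with endpoints.

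Next I would analyse the asymptotic geometry of this pursuit. The first-variation formula for intrinsic distance along simple pursuit in a CAT(1) space (valid because the separation stays in \((0,\pi)\)) gives
\[
\tfrac{d}{dt}\idist(x(t),y(t))\;=\;-1+\cos\theta(t),
\]
where \(\theta(t)\) is the angle at \(y(t)\) between \(y'(t)\) and the unique geodesic from \(y(t)\) to \(x(t)\). Together with the uniform bounds on \(\idist(x(t),y(t))\), this forces \(\int_0^\infty(1-\cos\theta(t))\,dt<\infty\), so on average the Man moves nearly directly away from the Lion. A complementary CAT(1) comparison then shows that long segments of the Lion's unit-speed trail \(x\) become asymptotically geodesic. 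Using compactness of \(\ol D\) together with the \(1\)-Lipschitz property, I would apply an Arzel\`a--Ascoli diagonal argument to the time-shifted trails \(x(\cdot+t_n)\), for some \(t_n\to\infty\), to extract a locally uniform subsequential limit \(\gamma_+:[0,\infty)\to\ol D\). For the backward half I would splice in the (uniformly bounded) geodesic from \(x(t_n)\) to \(y(t_n)\) followed by the reversed Man's trail, reparametrised with unit speed, and extract a limit \(\gamma_-:(-\infty,0]\to\ol D\) matching \(\gamma_+(0)\); concatenation yields a \(1\)-Lipschitz curve \(\gamma:\Reals\to\ol D\).

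The main obstacle is the final step: verifying that \(\gamma\) is genuinely a local geodesic rather than merely \(1\)-Lipschitz. I would argue by contradiction using CAT(1) comparison. If some short subarc of \(\gamma\) failed to be length-minimising in the intrinsic distance, then by continuous dependence of geodesics on their endpoints within the CAT(1) regime one could transfer a definite shortening back to a corresponding subarc of the pre-limit pursuit trail. Reinserting the shortened segment into the simple-pursuit identity for \(\tfrac{d}{dt}\idist(x(t),y(t))\) would force a strict loss in separation exceeding what is allowed by the uniform lower bound \(\delta\) on a sufficiently long time horizon, a contradiction. The delicate point, where the CAT(1) hypothesis together with \(\idist(x(t),y(t))\in[\delta,\pi)\) is used most essentially, is in controlling the comparison triangles uniformly in \(t\) so that the transferred shortening is itself uniform in \(n\); once this is established, \(\gamma\) is a bilaterally infinite local geodesic, proving the proposition.
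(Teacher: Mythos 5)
The paper does not prove this proposition itself: it quotes it from \citet[Proposition 23]{AlexanderBishopGhrist-2009} and supplies only a sketch, whose skeleton --- evade the simple pursuit provided by Proposition \ref{o26.40}, then take an Arzel\`a--Ascoli limit of long segments of the pursuit trail --- your forward-time construction shares. But two steps in your version would fail as written. The heart of the matter is your assertion that ``a complementary CAT(1) comparison then shows that long segments of the Lion's trail become asymptotically geodesic.'' Your first-variation computation controls the \emph{Man's} velocity (he must run away at essentially full speed in an averaged sense); it says nothing directly about the turning of the \emph{Lion's} trail. The missing lemma is the quantitative statement that turning costs separation: with $\idist(x,y)$ confined to $[\delta,\pi-\delta']$, the Lion's heading $\chi(x(t),y(t))$ turns at a rate comparable to the Man's transversal speed divided by $\sin\idist(x,y)$, while that same transversal speed decreases the separation at a rate quadratic in it; since the total loss of separation is finite, the total turning of $x$ over $[0,T]$ is $O(\sqrt T)$ --- this is the ``sublinear total curvature'' in the paper's sketch. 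A pigeonhole choice of windows of length $L_n\to\infty$ with vanishing total turning then forces the limit to have zero turning, i.e.\ to be a local geodesic. Your closing contradiction argument cannot substitute for this: the Lion's trajectory is determined by the pursuit dynamics, so one cannot ``reinsert a shortened segment'' into it, and shortening a subarc of $x$ has no direct bearing on $\idist(x(t),y(t))$. (Also, with $\theta$ the angle at $y$ between $y'$ and the geodesic toward $x$, the first variation reads $\tfrac{d}{dt}\idist(x,y)=-1-\cos\theta$, not $-1+\cos\theta$.)

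The second gap is the backward half. You build $\gamma_-$ from the reversed Man's trail, but nothing in the evasion hypothesis controls the curvature of the Man's path: the constraint is only on the component of $y'$ along the geodesic to $x$, and $y$ is otherwise free to curve, so the limit of reversed Man's trails need not be locally geodesic; the splice at $y(t_n)$ also introduces a corner except asymptotically. The standard construction obtains both halves at once by taking two-sided windows $x|_{[t_n-L_n,\,t_n+L_n]}$ of the Lion's own trail, recentred at $x(t_n)$: since the pursuit runs forever, arbitrarily long backward history is available within $x$ itself, and the sublinear-curvature bound applies uniformly to the whole window. No splicing with the Man's trajectory is needed.
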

The idea of the proof is as follows. Consider a successful evasion by the Man of the simple pursuit strategy provided by Proposition \ref{o26.40}. 
The corresponding path will have total curvature that grows sublinearly. 
A sequence of segments of this evasion path, with lengths tending to \(\infty\), 
can be used to construct a limit by applying compactness to choose a convergent subsequence.   This limit will be
a bilaterally infinite path that must have zero total curvature, and hence be a geodesic.
(Note that this geodesic may be c1osed!)

A key result 
in this area of metric geometry
is the powerful technique of Reshetnyak majorization, which reduces the essence of many problems to calculations from two-dimensional spherical geometry.
\begin{prop}[Reshetnyak majorization]\label{prop:Reshetnyak-majorization}
 If the length of a rectifiable closed curve \(h\) in a \(\CAT1\) space \(D\) is less than \(2\pi\), then there is a 
convex domain \(C\), contained in \(S^2\), that majorizes \(h\) in the sense that there is a distance non-expanding
 map from \(C\) into \(D\), such that its restriction to the boundary of \(C\) is an arc-length preserving map onto the image of \(h\).
\end{prop}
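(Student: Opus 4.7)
The plan is to first establish the result for geodesic polygons and then derive the general case by approximation. Given a closed geodesic polygon $P$ in $D$ with vertices $p_0, p_1, \ldots, p_{n-1}$ and perimeter less than $2\pi$, I would argue by induction on $n$. The base case $n=3$ is immediate from Definition \ref{def:CAT-definitions}: the CAT(1) condition furnishes a model triangle $\tilde P \subset S^2$ with matching side lengths, together with a distance non-expanding map from the filled model triangle to $D$, obtained by first comparing points on rays from one vertex to the opposite side and then iterating this ``two-variable'' comparison — this is the standard argument showing that the CAT($\kappa$) four-point condition follows from the triangle condition.

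For the inductive step, I would use Alexandrov's patching procedure: slice $P$ with the geodesic from $p_0$ to $p_2$, obtaining a triangle $\triangle p_0 p_1 p_2$ and a residual polygon $P'$ with one fewer vertex. The induction hypothesis yields a convex majorizing region $C' \subset S^2$ with a distance non-expanding map $\varphi':C'\to D$. I would glue to $C'$ a model triangle $\tilde{\triangle}\subset S^2$ for $\triangle p_0 p_1 p_2$ along the edge corresponding to the geodesic $p_0 p_2$, obtaining a candidate $C$ together with a map $\varphi$ defined piecewise. Alexandrov's lemma comparing angles in glued model triangles shows that the resulting $C$ is embedded and convex in $S^2$, while a standard two-triangle comparison argument, invoking the CAT(1) inequality along a geodesic crossing the glue edge, confirms that $\varphi$ remains distance non-expanding across the gluing.

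Once the polygonal case is established, the general rectifiable case follows by approximation. Choose nested finite partitions of $\SS$ whose images under $h$ determine geodesic polygons $h_n$ converging uniformly to $h$; since each $h_n$ has perimeter at most $\ell_h < 2\pi$, the corresponding convex majorizing regions $C_n \subset S^2$ have diameter bounded away from $\pi$. I would extract a limit using Blaschke's selection theorem for compact convex sets in $S^2$, combined with an Arzel\`a--Ascoli argument applied to the uniformly $1$-Lipschitz maps $\varphi_n: C_n\to \overline{D}$, producing the desired $(C,\varphi)$ in the limit.

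The principal obstacle lies in the inductive step: one must verify simultaneously that (i) the glued region $C$ lies convexly and without self-overlap in $S^2$, which requires the careful angle-sum estimate provided by Alexandrov's lemma under the perimeter bound $< 2\pi$, and (ii) the piecewise map $\varphi$ is globally distance non-expanding and not merely so within each model triangle. Verifying (ii) requires, for any two points in $C$ not lying in a common triangle of the decomposition, reducing to a comparison along the unique $S^2$-geodesic joining them, which may cross several glued model triangles; the CAT(1) property must be applied repeatedly along this broken path, and the resulting estimate then sharpened by comparing with the straight spherical geodesic — this is the technical heart of Reshetnyak's original argument and the step where the bound $< 2\pi$ is essential, since it prevents the glued spherical polygon from wrapping around a great circle.
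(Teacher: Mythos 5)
The paper does not prove this proposition at all: it is quoted as a known theorem of Reshetnyak, with the proof delegated to \citet{Reshetnyak-1968} (and \citet{ManeesawarngLenbury-2003} for a clean statement). Your outline is precisely the classical argument from that literature --- majorize geodesic polygons by induction on the number of vertices, gluing spherical model triangles along a diagonal and invoking Alexandrov's lemma (the arm lemma) to keep the glued region embedded and convex, then pass to a general rectifiable curve via inscribed polygons, Blaschke selection for the convex regions and Arzel\`a--Ascoli for the uniformly $1$-Lipschitz maps. So the approach is the right one and matches the cited proof in structure. As a sketch it is sound, and you have correctly located the two genuinely hard points (convexity of the glued region under the perimeter bound $<2\pi$, and the global non-expansion of the piecewise-defined map across glue edges); note that the latter is not a single application of the CAT(1) inequality along the crossing geodesic but requires Reshetnyak's more delicate iterated comparison, and in the limiting step you should also record why the boundary parametrization of the limit $C$ is arc-length preserving --- this follows because the perimeters of the $C_n$ converge to $\ell_h$ while the limit boundary map is $1$-Lipschitz onto a curve of that exact length. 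None of this constitutes a wrong turn; it is simply the unavoidable technical content of Reshetnyak's theorem, which the paper deliberately does not reproduce.
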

For a proof see \citet{Reshetnyak-1968}; a clear statement can be found in \citet{ManeesawarngLenbury-2003}.

The relevant calculation from two-dimensional spherical geometry is summarized in the following preparatory lemma.
\begin{lemma}\label{lem:spherical-geometry}
 Let \(p\,q\,\widetilde{q}\,\widetilde{p}\) be a geodesic quadrilateral on the unit \(2\)-sphere, such that the interior angles
at \(p\) and \(q\) are obtuse or right-angles, 
such that \(\wt p\) and \(\wt q\) are on the same side of the great circle passing through  \(p\) and \(q\),
such that \(\delta=\dist(p,q)\in(0,\pi)\), and such that the
distances \(\dist(p,\widetilde{p})\) and \(\dist(q,\widetilde{q})\) are both bounded above by some 
positive \(\varepsilon<\delta/2\). If \(\varepsilon\)
is chosen small enough so that 
\begin{align}\label{jan9.1}
 \frac{1-\cos\delta}{\min\{\sin\delta, \sin(\delta-2\varepsilon)\}}\sin\varepsilon \quad<\quad 2,
\end{align}
then
\begin{equation}\label{eq:spherical-geometry}
\widetilde{\delta} \quad=\quad \dist(\widetilde{p}, \widetilde{q}) \quad\geq\quad
\delta - \frac{1-\cos\delta}{\min\{\sin\delta, \sin(\delta-2\varepsilon)\}} \; \sin^2\varepsilon\,.
\end{equation}
\end{lemma}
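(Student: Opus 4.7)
The plan is to split the proof into two stages: Stage 1 derives the pointwise bound \(\cos\wt\delta \leq \cos\delta + (1-\cos\delta)\sin^2\varepsilon\) by spherical trigonometry, and Stage 2 converts this into the claimed lower bound on \(\wt\delta\) via the mean value theorem.

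For Stage 1, I would place \(p = (1,0,0)\) and \(q = (\cos\delta, \sin\delta, 0)\) on the equator of the unit sphere \(S^2\) and write \(\wt p = p\cos a + u_p\sin a\), \(\wt q = q\cos b + u_q\sin b\), where \(a = \dist(p,\wt p) \leq \varepsilon\), \(b = \dist(q, \wt q) \leq \varepsilon\), and \(u_p, u_q\) are unit tangent vectors. The interior-angle and same-side conditions force the explicit decompositions \(u_p = (0, -c_p, s_p)\) and \(u_q = (-c_q\sin\delta, c_q\cos\delta, s_q)\) with \(c_p, c_q, s_p, s_q \in [0,1]\) and \(c_p^2 + s_p^2 = c_q^2 + s_q^2 = 1\). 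Expanding \(\cos\wt\delta = \langle\wt p, \wt q\rangle\) and regrouping produces the identity
\[
 \cos\delta\cos a\cos b + \sin a\sin b - \cos\wt\delta \;=\; \sin\delta\,(c_p\sin a\cos b + c_q\cos a\sin b) + (1 + c_p c_q \cos\delta - s_p s_q)\sin a\sin b.
\]
The first right-hand summand is manifestly non-negative, so establishing the target inequality \(\cos\wt\delta \leq \cos\delta\cos a\cos b + \sin a\sin b\) reduces to showing \(1 + c_p c_q \cos\delta - s_p s_q \geq 0\). This is trivial when \(\cos\delta \geq 0\); for \(\cos\delta < 0\), I would parametrize \(c_p = \sin\phi_p\), \(s_p = \cos\phi_p\) (and similarly for \(c_q, s_q\)) with \(\phi_p, \phi_q \in [0,\pi/2]\), so that
\[
 s_p s_q + c_p c_q |\cos\delta| \;\leq\; s_p s_q + c_p c_q \;=\; \cos(\phi_p - \phi_q) \;\leq\; 1.
\]
A short calculus check then confirms that the maximum of \(\cos\delta\cos a\cos b + \sin a\sin b\) on \([0,\varepsilon]^2\) is attained at the corner \(a = b = \varepsilon\), yielding \(\cos\wt\delta \leq \cos\delta\cos^2\varepsilon + \sin^2\varepsilon = \cos\delta + (1-\cos\delta)\sin^2\varepsilon\).

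For Stage 2, if \(\wt\delta \geq \delta\) the conclusion \eqref{eq:spherical-geometry} is trivial, so assume \(\wt\delta < \delta\); the mean value theorem then gives \(\cos\wt\delta - \cos\delta = (\delta - \wt\delta)\sin\xi\) for some \(\xi \in [\wt\delta, \delta]\). The triangle inequality forces \(\wt\delta \geq \delta - 2\varepsilon\), so \(\xi \in [\delta - 2\varepsilon, \delta] \subset (0,\pi)\); since the minimum of \(\sin\) on any subinterval of \((0,\pi)\) is attained at an endpoint, \(\sin\xi \geq \min\{\sin\delta, \sin(\delta - 2\varepsilon)\}\), and dividing yields \eqref{eq:spherical-geometry}. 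Hypothesis \eqref{jan9.1} plays no explicit role in the derivation; it serves only to guarantee that the resulting bound improves upon the trivial triangle-inequality estimate \(\delta - 2\sin\varepsilon\), so that \eqref{eq:spherical-geometry} is nontrivial. The main technical obstacle will be the pointwise cosine inequality of Stage 1, particularly the case \(\cos\delta < 0\), where the wrong-signed contribution from \(c_p c_q \cos\delta\) must be absorbed into the non-negative quantity \(1 - s_p s_q\) via the identity \(\cos(\phi_p - \phi_q) \leq 1\).
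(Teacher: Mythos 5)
Your proof is correct, and it takes a genuinely different route from the paper's. The paper first performs a geometric reduction to the extremal symmetric configuration (right angles at \(p\) and \(q\), \(\dist(p,\widetilde p)=\dist(q,\widetilde q)=\varepsilon\)), using the perpendicular-bisector great circle \(\mathcal{E}\) and a monotonicity analysis of the height function on little circles; it then computes \(\cos\widetilde\delta\) exactly in that configuration and extracts the bound on \(\eta=\delta-\widetilde\delta\) by an integral/root-existence argument in which hypothesis \eqref{jan9.1} is invoked to guarantee a root \(\eta\in[0,2\varepsilon)\). You instead work with the general configuration directly: your algebraic identity exhibits \(\cos\delta\cos a\cos b+\sin a\sin b-\cos\widetilde\delta\) as a manifestly non-negative quantity (the only non-obvious piece, \(1+c_pc_q\cos\delta-s_ps_q\ge0\), being handled correctly via \(s_ps_q+c_pc_q\le1\)), and you then pass from the cosine bound to the length bound by the mean value theorem, using the triangle inequality \(\widetilde\delta\ge\delta-2\varepsilon\) to locate \(\xi\in[\delta-2\varepsilon,\delta]\subset(0,\pi)\) and concavity of \(\sin\) to bound \(\sin\xi\) from below by \(\min\{\sin\delta,\sin(\delta-2\varepsilon)\}\). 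This buys you two things: the delicate reduction-to-extremal-case step disappears, and, as you note, condition \eqref{jan9.1} is never used — your argument shows the conclusion holds without it (which is harmless, since proving the conclusion under a weaker hypothesis is stronger). What the paper's route buys in exchange is the exact equality \(\cos\widetilde\delta=\cos\delta\cos^2\varepsilon+\sin^2\varepsilon\) in the extremal case, which identifies precisely when the estimate is sharp. Both arguments are sound; yours is the more self-contained computation.
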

In the context of our application of this lemma we will require \(\delta\) to be small, so that we may take \(\min\{\sin\delta, \sin(\delta-2\varepsilon)\}=\sin(\delta-2\varepsilon)\).
\begin{proof}
We begin by showing how to reduce the argument to the symmetric case, 
where \(\dist(p,\widetilde{p})=\dist(q,\widetilde{q})=\varepsilon\) and the interior angles at \(p\) and \(q\) are right-angles.
First, let \(\mathcal{E}\) be the ``equatorial'' great-circle geodesic that is the perpendicular bisector of the minimal
geodesic from \(p\) to \(q\). 
Since the
distances \(\dist(p,\widetilde{p})\) and \(\dist(q,\widetilde{q})\) are bounded by \(\varepsilon<\delta/2 < \pi/2\) and the interior angles
at \(p\) and \(q\) are obtuse or right-angles, the points \(\wt p\) and \(\wt q\) lie on the opposite sides of \(\mathcal{E}\), and therefore
\[
 \dist(\widetilde{p},\mathcal{E}) + \dist(\widetilde{q},\mathcal{E}) 
\quad\leq\quad
\dist(\widetilde{p},\widetilde{q})\,.
\]
Let \(\mathcal{H}\) be the open hemisphere of \(S^2\setminus\mathcal{E}\) containing \(p\). Then the function
\(x\mapsto \dist(x,\mathcal{E})\) of \(x\in\mathcal{H}\) is a nonlinear, but strictly increasing function of the vertical height of \(x\) 
above the equatorial plane that is defined by \(\mathcal{E}\). Moreover \(x\mapsto \dist(x,\mathcal{E})\), restricted to the little
circle \(\{x: \dist(x,p)=\dist(\widetilde{p},p)\}\), can have
just one minimum and just one maximum
(since \(\dist(p,\mathcal{E})<\pi/2\)).  The maximum and minimum 
must lie on the great-circle geodesic \(\gamma\) defined by \(p\) and \(q\),
and the vertical height function \(x\mapsto \dist(x,\mathcal{E})\) varies strictly monotonically on the two connected components of \(\{x: \dist(x,p)=\dist(\widetilde{p},p)\}\setminus \gamma\).
All these facts follow immediately from the observation that the little circle \(\{x: \dist(x,p)=\dist(\widetilde{p},p)\}\)
can be obtained as the intersection of \(\mathcal{H}\) with an inclined plane.
It follows directly that
\(\dist(\widetilde{p},\mathcal{E})\) is minimized when the interior angle at \(p\) is reduced to a right-angle. Similarly, \(\dist(\widetilde{q},\mathcal{E})\) is minimized when the interior angle at \(q\) is reduced to a right-angle.

In the case where the interior angle at \(p\) (respectively \(q\)) is a right angle,
we can argue that \(\dist(\widetilde{p},\mathcal{E})\) (respectively, \(\dist(\widetilde{q},\mathcal{E})\)) is minimized
when \(\dist(\widetilde{p},p)\) (respectively, \(\dist(\widetilde{q},q)\)) is increased to 
the maximum allowed value, namely \(\varepsilon\). 
For a similar argument shows that the height function \(x\mapsto \dist(x,\mathcal{E})\), when restricted to the great circle through \(p\) and perpendicular to \(pq\) at \(p\), attains its maximum at \(x=p\), and is strictly increasing on the two portions of this geodesic rising from \(\mathcal{E}\) to \(p\).

On the other hand, if the interior angles at \(p\) and \(q\) are right-angles, and \(\dist(\widetilde{p},\mathcal{E})=\dist(\widetilde{q},\mathcal{E})=\varepsilon\), then the geodesic segments realizing 
\(\dist(\widetilde{p},\mathcal{E})\) and \(\dist(\widetilde{q},\mathcal{E})\)
will together form the minimal geodesic from \(\widetilde{p}\) to \(\widetilde{q}\). This highly symmetric situation can 
be analyzed using vector geometry. It is immediate 
from the reduction argument that \(\widetilde{\delta}=\dist(\widetilde{p},\widetilde{q})<\delta=\dist(p,q)\).  
So, we can employ Cartesian coordinates such that:
\begin{itemize}
 \item[] \((1,0,0)\) is the point of intersection of \(\mathcal{E}\) with the minimal geodesic from \(p\) to \(q\);
\item[] \(p=(\cos(\delta/2),\sin(\delta/2),0)\) and \(q=(\cos(\delta/2),-\sin(\delta/2),0)\);
\item[] \(\widetilde{p}=(\cos(\delta/2)\cos\varepsilon,\sin(\delta/2)\cos\varepsilon,\sin\varepsilon)\)
and \(\widetilde{q}=(\cos(\delta/2)\cos\varepsilon,-\sin(\delta/2)\cos\varepsilon,\sin\varepsilon)\).
\end{itemize}
Accordingly, 
\begin{equation}\label{eq:spherical-side-calculation1}
 \cos\widetilde{\delta}\;=\; 
\cos\dist(\widetilde{p},\widetilde{q})\;=\;
\cos^2(\delta/2)\cos^2\varepsilon-\sin^2(\delta/2)\cos^2\varepsilon+\sin^2\varepsilon 
\;=\;
\cos\delta \cos^2\varepsilon+\sin^2\varepsilon\,.
\end{equation}
Set \(\eta=\delta-\widetilde{\delta}\) and note that \(\widetilde{\delta}\geq\delta-2\varepsilon\) by the triangle inequality, and so
\(\eta\leq 2\varepsilon\). Note also that we assumed \(\varepsilon<\delta/2=\tfrac{1}{2}\dist(p,q)\), and so \(\widetilde{\delta}>0\).
Re-arranging \eqref{eq:spherical-side-calculation1} to read
\begin{equation}\label{eq:spherical-side-calculation2}
 \cos(\delta-\eta) - \cos\delta \quad=\quad (1-\cos\delta)\sin^2\varepsilon\,,
\end{equation}
and using \(2\varepsilon<\delta<\pi\) and \(0<\eta\leq 2\eps\), the left-hand side of \eqref{eq:spherical-side-calculation2} has 
partial derivative with respect to \(\eta\) given by 
\[
 \sin(\delta-\eta) \quad\geq\quad \min\{\sin\delta, \sin(\delta-2\varepsilon)\} \quad>\quad0\,.
\]

Since \(\pi>\delta>\delta-2\varepsilon>0\), we deduce from \eqref{jan9.1} that 
\begin{multline*}
(1-\cos\delta)\sin^2\varepsilon \quad<\quad 2 \min\{\sin\delta, \sin(\delta-2\varepsilon)\}\sin\eps \quad<\quad 
\\
\quad<\quad
\min\{\sin\delta, \sin(\delta-2\varepsilon)\}\cdot 2\eps    
\quad\leq\quad 
\int_0^{2\varepsilon}\sin(\delta-\eta)\d \eta\,.
\end{multline*}
Calculus therefore shows that \eqref{eq:spherical-side-calculation2} must have a root \(\eta\) in the range \([0,2\varepsilon)\).
Moreover, \(\eta\) must satisfy
\[
\eta \cdot \min\{\sin\delta, \sin(\delta-2\varepsilon)\}
\quad\leq\quad
 (1-\cos\delta)\sin^2\varepsilon\,,
\]
and hence
\[
 \eta 
\quad\leq\quad
 \frac{1-\cos\delta}{\min\{\sin\delta, \sin(\delta-2\varepsilon)\}}\sin^2\varepsilon,
\]
which implies \eqref{eq:spherical-geometry} as required.
\end{proof}

Let a \emph{\(k\)-times broken geodesic} 
be a continuous path which is locally geodesic save at \(k\) distinct points.
Consider now the bilaterally infinite geodesic guaranteed by Proposition \ref{prop:ABG} under a successful evasion strategy.
Given any \(T>0\), we can choose a segment of this geodesic of length at least \(T\).  By concatenating it with the
reverse curve (i.e., the geodesic segment obtained by retracing the path of the original segment),  
one obtains a closed, twice-broken
geodesic of length at least $2T$.   
 The \(\CAT1\) property constrains the constant 
of contractibility for broken geodesics as follows.

\begin{prop}\label{prop:k-broken}
Let \(D\) be an ESIC domain
that is  \(\CAT1\).
Let \(\lp\) be a loop in \(D\) that is a \(k\)-times broken geodesic. 
Suppose that $\lp$ is well-contractible, with contractibility constant $c$. Then
\begin{equation}
\label{4.5'}
 c \quad\leq\quad \frac{7+11k}{\ell_\lp}\,.
\end{equation}
\end{prop}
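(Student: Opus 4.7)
The plan is to combine matching estimates on the quantity $\ell_\lp-\ell_{\lp_\eta}$. The well-contractibility hypothesis gives, for every $\eta\in(0,1]$, the lower bound $\ell_\lp-\ell_{\lp_\eta}\geq c\,d_H(\lp,\lp_\eta)\,\ell_\lp$. The strategy is to establish a matching upper bound of the form $\ell_\lp-\ell_{\lp_\eta}\leq (7+11k)\,\eps+o(\eps)$ as $\eps=d_H(\lp,\lp_\eta)\downarrow 0$, which upon division by $\eps$ and passage to the limit yields the claim $c\,\ell_\lp\leq 7+11k$.

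The upper bound is to be produced via Reshetnyak majorization (Proposition \ref{prop:Reshetnyak-majorization}) applied arc-by-arc, together with the spherical computation in Lemma \ref{lem:spherical-geometry}. I would first partition $\lp$ so that every one of the $k$ break points lies at a partition endpoint, subdividing further so that each subarc $G_i$ has length at most some small $\delta>0$; by construction each $G_i$ is then a minimising geodesic. Using the Hausdorff bound $\eps$, a monotone arc-length matching associates each $G_i$ with a corresponding arc $\widetilde G_i\subset \lp_\eta$ whose endpoints lie within intrinsic distance $\eps$ of those of $G_i$. For each interior subarc $G_i$ -- that is, one whose two endpoints are both non-break points of $\lp$ -- I form the closed quadrilateral comprising $G_i$, a connecting geodesic of length at most $\eps$, the reversed arc $\widetilde G_i^{-1}$, and a connecting geodesic of length at most $\eps$. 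With $2\delta+2\eps<2\pi$, Reshetnyak applies; choosing the connecting geodesics to be nearest-point projections ensures, by $\CAT{1}$ angle comparison, that the corners of the spherical model at the $G_i$-endpoints are at least right angles, so Lemma \ref{lem:spherical-geometry} gives $\ell(G_i)-\ell(\widetilde G_i)\leq\frac{(1-\cos\delta)\sin^2\eps}{\sin(\delta-2\eps)}=O(\delta\eps^2)$. Summing over the $O(\ell_\lp/\delta)$ interior arcs contributes $O(\ell_\lp\eps^2)=o(\eps)$.

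The hard part is the remaining at most $2k$ subarcs abutting a break point of $\lp$: at such an endpoint $\lp$ has a genuine corner and the perpendicularity construction, and hence Lemma \ref{lem:spherical-geometry}, is unavailable. For these subarcs I would revert to a cruder estimate -- for instance, the spherical triangle inequality inside the Reshetnyak majorizer bounds $\ell(G_i)-\ell(\widetilde G_i)$ by an absolute multiple of $\eps$ -- contributing $O(k\eps)$ overall. The explicit constants $7$ and $11$ must then be read off from careful bookkeeping: the linear coefficient of $\eps$ per break-point arc, the overhead from inserting a bounded number of artificial splits to keep every Reshetnyak quadrilateral's perimeter below $2\pi$, and the construction of the monotone matching so that the arcs $\widetilde G_i$ tile $\lp_\eta$ essentially without gap or overlap. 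Tracking these constants through the arc-by-arc accounting at break points is the main technical challenge.
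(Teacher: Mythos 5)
Your proposal follows the same overall strategy as the paper's proof: partition the broken geodesic, use Reshetnyak majorization (Proposition \ref{prop:Reshetnyak-majorization}) together with the spherical calculation (Lemma \ref{lem:spherical-geometry}) on the segments away from break points, handle the break-point region with a crude estimate, and sum. The decomposition differs in one substantive way. You place the break points at partition endpoints and use arcs of fixed length $\delta$ independent of $\eps$; the interior arcs then contribute $O(\ell_\lp\eps^2)$ in aggregate, and each of the $\leq 2k$ break-abutting arcs contributes $\leq 2\eps$ by the intrinsic triangle inequality (they are minimizing geodesics, so no Reshetnyak is even needed for them), giving $c\,\ell_\lp\leq 4k$ in the limit. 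The paper instead uses segments of length tied to $\eps$ (roughly $7\eps$), places each break point strictly inside one of at most $k$ segments, and simply discards those $k$ segments, whose lengths total $\leq (7+11k)\eps$. Since $4k\leq 7+11k$, your route already yields a bound at least as strong as \eqref{4.5'}; the exact constants $7,11$ are an artifact of the paper's particular partition, and you need not worry about recovering them. (For $k=0$ your estimate forces $c=0$, correctly reflecting that a closed local geodesic cannot be well-contractible, and the proposition is then vacuously true.) One detail you should make explicit: to obtain the right or obtuse corners needed for Lemma \ref{lem:spherical-geometry}, the partition endpoints $q$ on $\lp$ must be nearest-point projections of the corresponding $\widetilde q\in\widetilde\lp$ back onto $\lp$, not the other way around; the paper handles this via a double projection $q'\mapsto\widetilde q\mapsto q$ that slightly perturbs the partition, and you would need the same device on your interior endpoints while keeping the break-point endpoints fixed.
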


\begin{proof}
It suffices to show the following: for all sufficiently small $\eps>0$, any loop \(\wt K\) with
\begin{equation}\label{eq:closeness}
 \sup_t\{\idist(\wt K(t),K(t))\}\quad<\quad \eps
\end{equation}
must satisfy
\begin{equation}\label{eq:contractibility-estimate}
\ell_\lp - \ell_{\wt \lp} \quad\leq\quad (7+11k)\;\eps + 9 \ell_\lp \eps^2\,.
\end{equation}
Fix some \(\eps>0\) with \(\eps<\pi/26\). 
Partition \(\lp\)
into a sequence of \(n=\lfloor \ell_\lp/(7\eps)\rfloor\) 
segments so that
the first \(n-1\) of these segments 
have length exactly \(7\eps\), and so that the \(n^\text{th}\) segment has length in the range \([7\eps, 14\eps)\)
and is located so that it contains a ``broken point" \(p\) of the geodesic that is at distance
at least \(3\eps\) from each end-point of the segment.

Consider any loop \(\widetilde{\lp}\) satisfying \eqref{eq:closeness}. For each end point \(q'\) of 
the geodesic segments used to partition \(\lp\), project \(q'\) to the nearest
point \(\widetilde{q}\) of \(\widetilde{\lp}\), and then project \(\widetilde{q}\)
back to the nearest point \(q\) of \(\lp\). Using the triangle inequality,
it follows that these points \(q\) divide \(\lp\) into
a sequence of new segments of lengths \(\ell_1\), \(\ell_2\), \ldots, \(\ell_n\), such that
\[
 3\eps \quad<\quad \ell_i \quad<\quad 11\eps\qquad \text{ for } i=1, \ldots, n-1\,,
\]
and \(3\eps< \ell_n<18\eps\). We associate with each endpoint \(q\) of these new segments the point \(\widetilde{q}\) on \(\widetilde{\lp}\) that was chosen as above, with the points arranged in corresponding order along \(\widetilde{\lp}\).
The construction of \(q'\), \(\widetilde{q}\) and \(q\) is illustrated in Figure \ref{fig:contractible}.
\begin{figure}[thbp]
\centering
\includegraphicsKB[width=3in]{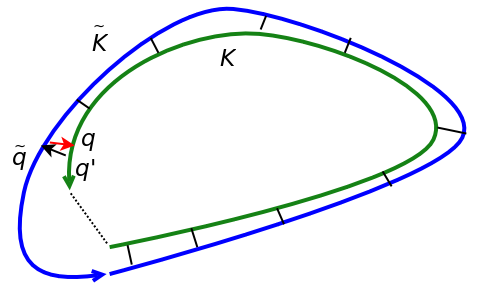}
\caption{A twice-broken geodesic \(\lp\), together with an approximating loop \(\widetilde{\lp}\). 
The twice-broken geodesic is divided into a sequence of segments as described in the text.
}
\label{fig:contractible}
\end{figure}

At most \(k\) of these segments, including the \(n^\text{th}\) segment, contain broken points of the geodesic in their interior.
For a given segment \(i=1, \ldots, n-1\), where the segment  
is \emph{not} a broken geodesic, 
this segment, together with the two points on \(\widetilde{\lp}\) corresponding to the segment's end-points, 
defines a quadrilateral in \(D\). One side of the quadrilateral is a geodesic of length \(\ell_i\), 
the two neighboring sides are also geodesics and form obtuse angles
or right-angles with the first side
(because the points \(q_i\) are projections on \(\lp\) of points in \(\widetilde\lp\)).
The fourth side can be replaced by a shorter geodesic of length \(\widetilde{\ell}_i\). 
Using the triangle inequality, the total perimeter length of the quadrilateral corresponding to the segment $i$, with
\(i<n\), will therefore be at most \(26\eps<\pi\). 
(Note that the curve formed by the quadrilateral may intersect itself, but will be a rectifiable closed curve.)

Each such quadrilateral is majorized by a convex domain $C$ in $S^2$, in the sense of Proposition \ref{prop:Reshetnyak-majorization}, with the map from $C$ to the quadrilateral being distance non-expanding
and the restriction to the boundary being arc-length preserving.  The domain $C$ is therefore a geodesic quadrilateral
that is \emph{not} self-intersecting;
its angles corresponding to the obtuse angles or right-angles of the quadrilateral in $D$ must themselves be obtuse or right-angled.  
One can also check that 
the quadrilateral $C$ satisfies the other
assumptions in Lemma \ref{lem:spherical-geometry}.  
It therefore follows from the lemma that
\[
 \widetilde{\ell}_i \quad\geq\quad \ell_i  
- \frac{1-\cos\ell_i}{\min\{\sin\ell_i, \sin(\ell_i-2\varepsilon)\}} \; \sin^2\eps
\,.
\]
Using \(3\eps < \ell_i<11\eps<\pi/2\), it also follows that
\[
  \widetilde{\ell}_i \quad\geq\quad 
\ell_i
-
\frac{1-\cos(11\eps)}{\sin\eps}\sin^2\eps
\quad\geq\quad
\ell_i  
- \frac{121 \eps^2}{2\sin\eps} \sin^2\eps
\quad\geq\quad\ell_i - 61 \eps^3
\,.
\]
This allows us to generate a lower bound on the total length of \(\widetilde{\lp}\). 
Allowing for the \(k\) or fewer segments that contain broken points of the geodesic,
this length has lower bound
\[
\ell_{\wt\lp}\quad\geq\quad
\sum_i \ell_i - (18 + 11(k-1))\eps
- 61 \left(\frac{\ell_\lp}{7\eps} - k\right) \eps^3
\quad\geq\quad
\sum_i \ell_i - (7+11k)\eps - 9 \ell_\lp \eps^2\,.
\]
Since \(\sum_i \ell_i=\ell_\lp\), we can rearrange terms to obtain \eqref{eq:contractibility-estimate}.
Considering arbitrarily small $\eps>0$,
and comparing to the definition of well-contractibility in Definition \ref{def:rubband}(b),
we obtain the upper bound on the contractibility constant \(c\)
given by (\ref{4.5'}).
\end{proof}


Propositions \ref{prop:ABG} and \ref{prop:k-broken} are the main ingredients in the proof of Theorem \ref{thmo13}.
For the proof, we also note that, by the first variation formula for $\CAT1$ spaces, the intrinsic distance 
between the Lion and the Man is non-increasing for simple pursuit (see \cite[A.1]{AlexanderBishopGhrist-2009}).

\begin{thm}
\label{thmo13}
Suppose that \(D\) is a bounded CL domain. 
Then there is no successful evasion strategy for the Man
if the Lion and Man are initially closer than \(\pi\) and 
if the Lion conducts a simple pursuit.
Moreover, there is a pursuit strategy for the Lion for which there is no successful evasion strategy for the Man, 
irrespective of the initial positions of the Lion and the Man.
\end{thm}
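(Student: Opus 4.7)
The plan has two parts: prove the first assertion by contradiction, chaining together Propositions \ref{o26.40}, \ref{prop:ABG} and \ref{prop:k-broken} against the uniform well-contractibility built into the CL hypothesis; then deduce the moreover clause by stitching together simple pursuits along a chain of intermediate waypoints.

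By the scaling remark at the start of the section, I may assume $D$ is $\CAT 1$. Fix $x_0, y_0 \in \ol D$ with $\idist(x_0, y_0) < \pi$ and let $y(\cdot)$ be any admissible Man curve with $y(0) = y_0$. Proposition \ref{o26.40} produces a unique admissible Lion curve $x(\cdot)$ with $x(0) = x_0$ such that $(x, y)$ is a simple pursuit, and the first variation formula for $\CAT 1$ spaces (cited just before the theorem) ensures that $t \mapsto \idist(x(t), y(t))$ is non-increasing. Supposing for contradiction that the Man successfully evades simple pursuit for some choice of $y(\cdot)$, i.e., $\inf_{t \ge 0} \idist(x(t), y(t)) > 0$, Proposition \ref{prop:ABG} supplies a bilaterally infinite local geodesic $\gamma$ in the compact space $\ol D$.

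For any $T > 0$, I would take the segment $\gamma|_{[0, T]}$ and concatenate it with its reversal to form a closed twice-broken geodesic loop $\lp_T$ of length exactly $2T$. The CL property guarantees that $\lp_T$ is well-contractible with some uniform contractibility constant $c > 0$ that is independent of $T$. Proposition \ref{prop:k-broken}, applied with $k = 2$, then yields
$$
c \quad\le\quad \frac{7 + 22}{\ell_{\lp_T}} \quad=\quad \frac{29}{2T}.
$$
Letting $T \to \infty$ forces $c \le 0$, contradicting $c > 0$; hence no successful evasion of simple pursuit exists when the initial intrinsic separation is less than $\pi$.

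For the moreover clause, given arbitrary $x_0, y_0 \in \ol D$ the Lion's strategy is to pre-select a chain of waypoints $x_0 = z_0, z_1, \ldots, z_N = y_0$ along some rectifiable path in $\ol D$ with $\idist(z_{i-1}, z_i) < \pi$ for every $i$, which is possible since $\ol D$ has finite intrinsic diameter. The Lion conducts simple pursuit successively toward each phantom target $z_1, z_2, \ldots$, while monitoring his intrinsic distance to the real Man and switching to genuine simple pursuit toward the Man the first time this distance drops below $\pi$; the first assertion then drives that distance to zero. The principal obstacle is verifying that the switching moment is always realized in finite time, no matter how cleverly the Man plays, since a priori the Man could try to maintain intrinsic separation at least $\pi$ throughout the Lion's chain traversal. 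This is handled by iterating the chain construction -- if the Lion completes the chain without triggering the switch, he replans a fresh chain from his current position to the Man's current position -- and arguing, using the boundedness of $\ol D$ together with the unit-speed constraint on both players, that the procedure terminates after finitely many replannings.
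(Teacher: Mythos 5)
The first part of your argument (for initial separation less than $\pi$) coincides in essence with the paper's: rescale to $\CAT1$, invoke Proposition \ref{o26.40} to get simple pursuit and Proposition \ref{prop:ABG} to extract a bilaterally infinite local geodesic from a hypothetical evasion, concatenate a long segment with its reversal to obtain a twice-broken closed geodesic, and then use Proposition \ref{prop:k-broken} to drive the contractibility constant to zero, contradicting the CL hypothesis. That part is correct.

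Your treatment of the moreover clause departs from the paper's and has a genuine gap. You have the Lion traverse a pre-selected chain of fixed waypoints from $x_0$ to $y_0$, monitoring intrinsic distance to the Man and switching to real simple pursuit once that distance drops below $\pi$, replanning a fresh chain otherwise. You assert that ``the procedure terminates after finitely many replannings'' by appealing to boundedness and the unit-speed constraint, but you give no argument, and the claim is not obvious. Each chain traversal takes time comparable to the current intrinsic separation, and during that time the Man, who also moves at unit speed, can travel comparably far; the Lion then ends each epoch at a stale position of the Man, with no evident decrease in separation. Boundedness of $\ol D$ alone does not prevent the Man from perpetually forcing replanning, so you have a missing termination argument.

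The paper avoids this by inserting a chain of simultaneously evolving ghost pursuers $\xi_0 = x, \xi_1, \ldots, \xi_n = y$, where each $\xi_i$ conducts simple pursuit of $\xi_{i+1}$ starting from $z_i$, with $\idist(z_{i-1},z_i) < \pi$. Because simple pursuit is non-increasing in intrinsic distance, each consecutive pair stays closer than $\pi$, so the first part of the theorem applies to each link; hence for any $\eps>0$ there is a common time after which every $\idist(\xi_i(t),\xi_{i+1}(t)) < \eps$, and the triangle inequality gives $\idist(x(t),y(t)) < n\eps$. This construction is a bona fide non-anticipative pursuit strategy (since each $\xi_i$ is determined by the history of $y$), and it sidesteps your termination issue entirely. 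To repair your proposal you would need to supply a real termination proof; otherwise, switch to the ghost-pursuer chain.
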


\begin{proof}
The CL domain \(D\) is ESIC, and is
 hence  \(\CAT\kappa\) for some \(\kappa>0\).
Rescaling if necessary, we may suppose that \(D\) is \(\CAT1\).

We first assume that the Lion and the Man are initially closer than \(\pi\), and afterwards consider the general case.
On account of Proposition \ref{prop:ABG}, a successful evasion strategy by the Man in response to
the Lion's simple pursuit would result in the construction of a bilaterally infinite local geodesic.  By the comment
preceding Proposition \ref{prop:k-broken}, one obtains closed, arbitrarily long
twice-broken geodesics in \(D\).
But, by Proposition \ref{prop:k-broken}, a closed twice-broken geodesic of length \(T\) will have contractibility constant
\(c\leq\tfrac{29}{T}\to0\) as \(T\to\infty\), which violates the assumption that \(D\) is a CL domain.
Consequently, there can be no successful evasion strategy of the Man. In particular, for any $\eps > 0$, the Lion will,
at large enough times, remain within this distance of the Man.

In order to extend the above argument to arbitrary initial positions of the Lion and the Man, one can connect these
positions by a finite chain of points that are each within distance $\pi$ of their immediate neighbors.  The 
above argument for simple pursuit by the Lion of the Man can be 
applied to each pair of neighboring points. Therefore, in each case, the distance 
between the corresponding pairs of paths will,
for large enough times, be within distance  $\eps$ of one another.  The distance between the Lion and the Man,
starting from arbitrary initial positions, will therefore eventually be within $n\eps$ of one another, 
where $n$ is the length of the chain.  Since
$\eps > 0$ is arbitrary, this completes the proof.
\end{proof}

\section{Shy couplings}\label{sec:shy}

The main result in this section is Theorem \ref{thmo24}, where we show that
CL domains admit no shy couplings.  
To demonstrate Theorem \ref{thmo24}, we will relate shy couplings to the
deterministic Lion and Man problem, with the Lion adopting the simple pursuit strategy to pursue the Man.  We 
employ a limiting Cameron-Martin-Girsanov transformation to make this comparison, after which we apply the first 
part of Theorem \ref{thmo13} to the corresponding  Lion and Man problem.


Let $D$ is a bounded ESIC domain, which is therefore CAT($\kappa$) for some $\kappa > 0$. After rescaling, we can
set $\kappa =1$, and so can assume that $D$ is CAT(1).  For any $v,z\in \ol D$ 
with $0<\idist(v,z) < \pi$, there is a unique geodesic 
between them; we denote by $\chi(v,z)$ the unit tangent vector at $v$ of the geodesic from $v$ to $z$.
(This tangent vector gives the direction of pursuit by the Lion of the Man when the Lion adopts simple pursuit.)
Proposition \ref{o26.12} states that \(\chi(v,z)\) varies continuously in \((v,z)\).
It is proved in \citet[Proposition 12 part (3)]{BramsonBurdzyKendall-2011}.

\begin{prop}\label{o26.12}
Suppose that $D$ is a bounded ESIC domain that is \(\CAT1\) .
Then
the vector field \(\chi(v,z)\) varies continuously in \((v,z)\) on the set 
$\{(v,z):\idist(v,z) \in(0, \pi)\}$, and hence is uniformly
continuous on any compact subset of this region.
\end{prop}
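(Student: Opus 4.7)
The plan is to combine the uniqueness and continuous dependence of \(\CAT1\) geodesics of length less than \(\pi\) on their endpoints with the small-scale comparability of intrinsic and Euclidean distance in ESIC domains, and thereby to extract \(\chi(v,z)\) as a Euclidean limit of difference quotients along the geodesic.

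First, I would recall the standard fact (as in \citet[\S II.1]{BridsonHaefliger-1999}) that in a \(\CAT1\) space, two points at intrinsic distance strictly less than \(\pi\) are joined by a unique minimal geodesic which depends continuously on its endpoints in the uniform topology. In particular, if \((v_n,z_n)\to(v,z)\) with \(\idist(v,z)\in(0,\pi)\), then for all sufficiently large \(n\) also \(\idist(v_n,z_n)\in(0,\pi)\), and after rescaling parametrizations to a common interval, the unit-speed minimal geodesics \(\gamma_n\) from \(v_n\) to \(z_n\) converge uniformly to the unit-speed minimal geodesic \(\gamma\) from \(v\) to \(z\).

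Next I would characterize \(\chi(v,z)\) as the Euclidean limit
\[
\chi(v,z)\quad=\quad\lim_{s\downarrow0}\frac{\gamma_{v,z}(s)-v}{s}\,,
\]
where \(\gamma_{v,z}\) is parametrized by intrinsic arclength. Existence and unit norm of this limit follow from Proposition~12 of \citet{BramsonBurdzyKendall-2011}: in an ESIC domain, the intrinsic distance between sufficiently close points of \(\ol D\) is comparable to the Euclidean distance, with multiplicative error \(1+O(s^2)\) and constants uniform over \(\ol D\). Since \(\gamma_{v,z}\) has intrinsic unit speed, this forces a well-defined Euclidean tangent of unit norm at \(s=0\).

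The main obstacle is uniformity of the small-\(s\) approximation as the starting point varies; the delicate case is when \(v\) lies on \(\prt D\) and \(\gamma_{v,z}\) immediately begins to follow the boundary. The uniform exterior sphere and uniform interior cone conditions are precisely what is needed to control this case: they provide global ESIC constants, so for any \(\eta>0\) one can find a single \(s=s(\eta)>0\) such that
\[
\left|\frac{\gamma_{w,u}(s)-w}{s}-\chi(w,u)\right|\quad<\quad\eta
\]
for every pair \((w,u)\) with \(\idist(w,u)\geq 2s\). Once this uniform small-scale estimate is available, combining it with the uniform convergence \(\gamma_n\to\gamma\) on \([0,s]\) yields \(\chi(v_n,z_n)\to\chi(v,z)\) by the triangle inequality, establishing continuity of \(\chi\) on \(\{(v,z):\idist(v,z)\in(0,\pi)\}\). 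The final assertion of uniform continuity on compact subsets is then immediate, since continuous functions on compact subsets of \(\Reals^{2d}\) are uniformly continuous.
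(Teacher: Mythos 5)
The paper itself offers no proof of this proposition: it simply cites \citet{BramsonBurdzyKendall-2011}, Proposition 12 part (3), so your attempt is necessarily a different route. Your overall skeleton is the right one: combine (a) uniqueness and uniform continuous dependence of \(\CAT1\) geodesics of length bounded away from \(\pi\) on their endpoints with (b) a first-order Taylor approximation of the geodesic at its starting point that is \emph{uniform} over base points, and conclude by a three-epsilon argument. Given (a) and (b), the passage to \(\chi(v_n,z_n)\to\chi(v,z)\) is indeed immediate, and (a) is a standard consequence of Definition \ref{def:CAT-definitions} that the paper itself invokes in the proof of Lemma \ref{lem:cat-kappa-homotopy}.

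The gap is in (b). The displayed uniform estimate
\(\bigl|\tfrac{1}{s}(\gamma_{w,u}(s)-w)-\chi(w,u)\bigr|<\eta\) for all pairs with \(\idist(w,u)\geq 2s\)
is essentially the whole content of the proposition, and you assert it rather than prove it: saying that the ESIC conditions ``provide global constants'' does not explain why the difference quotients converge, let alone uniformly. Note that mere \emph{comparability} of intrinsic and Euclidean distance --- which is what the present paper attributes to the cited Proposition 12, i.e.\ \(\idist(p,q)\leq C\,|p-q|\) for close points --- is not sufficient: with only a multiplicative constant \(C>1\) the difference quotients need not converge at all. What is actually required is a second-order estimate of the form \(\idist(p,q)\leq |p-q|\,(1+C\,|p-q|^2)\) with \(C\) uniform over \(\ol D\) (equivalently, a uniform \(C^{1,1}\) bound on geodesics of \(\ol D\), including those that run along \(\partial D\)). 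Such a bound does follow from the uniform exterior sphere condition, but it needs an argument --- for instance, a chord-angle estimate showing that near-equality in the triangle inequality for \(|\gamma(0)\gamma(s)|\), \(|\gamma(s)\gamma(t)|\), \(|\gamma(0)\gamma(t)|\) forces the angle between successive difference quotients to be \(O(t)\), followed by a dyadic Cauchy argument giving both existence of the tangent and a uniform rate. You also attribute the \(1+O(s^2)\) form to Proposition 12 of \citet{BramsonBurdzyKendall-2011} without justification; since part (3) of that same proposition is precisely the continuity of \(\chi\), you should check which part you are using and whether your appeal is circular. Either supply the second-order estimate and the Cauchy argument, or cite the precise statement that delivers the uniform tangent approximation.
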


Proposition \ref{o26.12} allows us to prove the following useful result  about simple pursuit in CAT(1) domains.
It states that if, for each pair of initial values $x(0)$ and $y(0)$, the
paths \(x\) and \(y\) 
eventually become arbitrarily close at certain times,
then this occurs uniformly, not depending on $x(0)$ and $y(0)$. 
Recall that the family $\Lambda_s$ of simple pursuits is defined in Definition \ref{o23.3}.  

\begin{prop}\label{o28.1}
Suppose that $D$ is a bounded ESIC domain that is \(\CAT1\),
and suppose that, for each $(x,y)\in \Lambda _s$ 
with $\idist(x(0),y(0)) \leq \pi/2$, and each $\eps>0$, there exists some 
$t< \infty$ such that $\idist(x(t), y(t)) \leq \eps$. 
Then, for each $\eps>0$, there exists some $t<\infty$ such
that, for each $(x,y)\in \Lambda _s$ with $\idist(x(0),y(0)) \leq \pi/2$, 
$\idist(x(t), y(t)) \leq \eps$.
\end{prop}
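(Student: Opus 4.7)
The plan is to argue by contradiction, leveraging compactness of admissible curves together with the continuity of the pursuit direction field $\chi$ supplied by Proposition \ref{o26.12}. Suppose the conclusion fails: for some $\eps>0$ there exist times $t_n\uparrow\infty$ and simple pursuits $(x_n,y_n)\in\Lambda_s$ with $\idist(x_n(0),y_n(0))\leq\pi/2$ such that $\idist(x_n(t_n),y_n(t_n))>\eps$. By the first variation formula for \(\CAT1\) spaces (cf.\ the remark preceding Theorem \ref{thmo13}), the function $t\mapsto\idist(x_n(t),y_n(t))$ is non-increasing along any simple pursuit, so in fact $\idist(x_n(t),y_n(t))>\eps$ for every $t\in[0,t_n]$.

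Next I would invoke Arzel\`a--Ascoli. Since the $x_n$ and $y_n$ are $1$-Lipschitz with respect to intrinsic (and hence Euclidean) distance, and $\ol D$ is compact, a diagonal extraction yields a subsequence along which $x_n\to x$ and $y_n\to y$ uniformly on every compact subset of $\Reals_+$, with $x,y$ admissible. The limiting pair satisfies $\idist(x(t),y(t))\geq\eps$ for all $t\geq 0$ and $\idist(x(0),y(0))\leq\pi/2$.

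The crucial step is to check that $(x,y)\in\Lambda_s$. Because $(x_n(t),y_n(t))$ remains in the compact set $\{(v,z)\in\ol D\times\ol D:\eps\leq\idist(v,z)\leq\pi/2\}$ throughout $[0,t_n]$, Proposition \ref{o26.12} shows that $\chi$ is uniformly continuous there, and in particular $\chi(x_n(s),y_n(s))\to\chi(x(s),y(s))$ uniformly on compact time intervals. Writing the simple-pursuit equation in integrated form,
\[
x_n(t)\quad=\quad x_n(0)+\int_0^t\chi(x_n(s),y_n(s))\,\d s,
\]
and passing to the limit by bounded convergence yields
\[
x(t)\quad=\quad x(0)+\int_0^t\chi(x(s),y(s))\,\d s,
\]
so that $x'(t)=\chi(x(t),y(t))$ almost everywhere, making $(x,y)$ a simple pursuit. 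But then the hypothesis, applied to $(x,y)$ with parameter $\eps/2$, would force $\idist(x(t),y(t))\leq\eps/2$ for some finite $t$, contradicting $\idist(x(t),y(t))\geq\eps$.

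The main obstacle is the identification of the limit as a genuine simple pursuit; this relies crucially on the uniform lower bound $\eps$, which prevents $(x_n,y_n)$ from approaching either the diagonal or the cut locus at distance $\pi$, and thereby keeps the pair inside the region in which the direction field $\chi$ is uniformly continuous and the limiting ODE makes sense.
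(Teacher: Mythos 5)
Your argument is correct and follows essentially the same route as the paper's: argue by contradiction, use monotonicity of $\idist(x_n(\cdot),y_n(\cdot))$ to propagate the $\eps$ lower bound backward in time, extract a limiting admissible pair via Arzel\`a--Ascoli and compactness of $\ol D$, and then use Proposition \ref{o26.12} together with bounded convergence in the integral form of the pursuit equation to identify the limit as a simple pursuit contradicting the hypothesis. Your explicit remark that the bounds $\eps\leq\idist\leq\pi/2$ keep $(x_n,y_n)$ in the region where $\chi$ is uniformly continuous is exactly the observation that makes the passage to the limit legitimate, and matches the role that Proposition \ref{o26.12} plays in the paper's proof.
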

\begin{proof}
Assume that, on the contrary, there exists $\eps>0$ such that, for every  integer $n>0$, there exists 
$(x_n,y_n)\in \Lambda _s$ such that $\idist(x_n(0),y_n(0)) \leq \pi/2$
and $\idist(x_n(n), y_n(n)) \geq \eps$.  (As observed by \citealp[A.1]{AlexanderBishopGhrist-2009}, 
the function $t\to\idist(x(t), y(t))$ 
is non-increasing for simple pursuit; we consequently need only consider integer times $n$.)

Since \(D\) is ESIC, we may extend simple pursuit to \(\ol D\) as well.
This allows us to use a variation on the classic Arzela-Ascoli argument.
Using the compactness of $\ol D$ and passing to a subsequence if necessary, 
we can assume that $\{x_n(0)\}_{n\geq 1}$ 
converges to $x_\infty(0)$ and $\{y_n(0)\}_{n\geq 1}$ 
converges to $y_\infty(0)$;
one must have $\idist(x_\infty(0), y_\infty(0))\leq \pi/2$. 
The functions $x_n$ and $y_n$ are Lipschitz with constant 1
so, for every fixed interval $[0,k]$, there exist subsequences 
of $\{x_n\}$ and $\{y_n\}$ that converge to admissible functions 
$x_\infty$ and $y_\infty$ uniformly on $[0,k]$. Using the diagonal method, 
we can assume that $\{x_n\}$ and $\{y_n\}$ converge 
to admissible functions $x_\infty$ and $y_\infty$ uniformly on every compact interval.
This and $\inf_{0\leq t \leq n}\idist(x_n(t), y_n(t)) \geq \eps$ 
imply that, for every 
$n$, $\inf_{0\leq t \leq n}\idist(x_\infty(t), y_\infty(t)) \geq \eps$. 
Hence, $\inf_{0\leq t <\infty}\idist(x_\infty(t), y_\infty(t)) \geq \eps$. 

It follows from  the uniform convergence of $x_n$ to 
$x_\infty$ and Proposition \ref{o26.12} that
$ \chi(x_n(s), y_n(s)) \to \chi(x_\infty(s), y_\infty(s))$ for every $s$. 
Since $x_n(t) - x_n(0) = \int_0^t \chi(x_n(s), y_n(s)) ds$ for 
every $t\geq 0$,
by applying bounded convergence as $n\rightarrow\infty$, it follows that 
$x_\infty(t) - x_\infty(0) = \int_0^t \chi(x_\infty(s), y_\infty(s)) ds$ 
for every $t\geq 0$. This shows that $(x_\infty, y_\infty)\in \Lambda_s$. 
Therefore, by the assumption made in the proposition, 
$\idist(x_\infty(t), y_\infty(t)) \to 0$ as $t\to\infty$. 
This contradicts our earlier claim and hence completes the proof. 
\end{proof}


We next introduce the notion of coupled Brownian motions. As mentioned in the introduction,
all probabilistic couplings considered in this paper are assumed to be co-adapted -- Brownian 
motions \(B\) and \(\widetilde{B}\) are
\emph{co-adaptively coupled} if they are
defined on the same probability space, are adapted to the same filtration
\(\{\mathcal{F}_t:t\geq0\}\) and if, in addition, both have independent
increments \emph{with respect to their common filtration}, i.e.,
\begin{align*}
 B_{t+s}-B_t &\text{ is independent of }\mathcal{F}_t\text{ for all }t, s\geq0\,, \quad\text{and}
\\
 \widetilde{B}_{t+s}-\widetilde{B}_t &\text{ is independent of }\mathcal{F}_t\text{ for all }t, s\geq0.
\end{align*}
(The alternative terminology of ``jointly immersed'' Brownian motions makes explicit use of the theory of
co-immersed filtrations of \(\sigma\)-algebras, see \citealp{Emery-2005}.)
Note that \(B_{t+s}-B_t\) and \(\widetilde{B}_{t+s}-\widetilde{B}_t\) will not in general be independent of each other.
\citet[Lemma 6]{Kendall-2009a} gives an explicit proof of the result from the folklore of stochastic calculus
that one may represent such a coupling using stochastic integrals,
possibly at the cost of augmenting the filtration so as to include a further independent Brownian 
motion \(C\).  Namely, there exist
\((d\times d)\)-matrix-valued predictable random processes \(\mathbb{J}\) and \(\mathbb{K}\) such that
\begin{align}\label{o26.1}
\wt B
\quad=\quad \int \mathbb{J}^\top \d B + \int \mathbb{K}^\top \d C\,,
\end{align}
with $\mathbb{J}$ and $\mathbb{K}$ satisfying
\begin{equation}
\label{new26.1}
\mathbb{J}^\top\mathbb{J}+\mathbb{K}^\top\mathbb{K} = I_d
\end{equation}
at all times, where $I_d$ is the \((d\times d)\) identity matrix.
(Informally one may view \(\mathbb{J}\) as the matrix of infinitesimal covariances between
the Brownian differentials \(\d B\) and \(\d \wt B\).)

In the context of stochastic calculus, a
pair of processes \(X\) and \(\widetilde{X}\) is said to form a co-adapted coupling
if they can be defined by strong solutions of stochastic differential equations driven
by \(B\), \(\widetilde{B}\) respectively. 
(There is of course a wider theory of co-adapted coupling applying to general Markov chains
and other random processes.) 
We will employ the stochastic differential
equation obtained from the Skorokhod transformation for reflected Brownian motion in an ESIC domain \(D\).
\citet{Saisho-1987} has shown 
for ESIC domains that, given
a driving Brownian motion \(B\),
there exists a unique solution pair \((X, \int \nu_X\d L^X)\) satisfying
\begin{align}
 &\d X \quad=\quad \d B - \nu_X \d L^X\,,\nonumber\\
 &L^X\text{ is non-decreasing and increases only when }X\in\partial D\,,
\label{6.2.18.new}\\
 &\nu_X \in \NN_{X,r}\,.\nonumber
\end{align}
Here, \(L^X\) may be viewed as the local time of the reflected Brownian motion \(X\) on the boundary \(\partial D\),
while \(\nu_X\) is a unit vector defined only when \(X\in\partial D\). (In the case of smooth boundary, \(\nu_X\)
may be taken to be the unit outward-pointing normal vector at \(X\in\partial D\); in
the more general case with uniform exterior sphere and interior cone conditions, the definitions of \(L^X\)
and \(\nu_X\) will be interdependent, but all choices lead to the same process \(X\).)  Note that the solutions
of \eqref{6.2.18.new} are pathwise unique, and the process $X$ is strong Markov.

Consider a co-adapted coupling of reflecting Brownian motions \(X\) and \(Y\) in
the bounded ESIC domain
\(D\subset\Reals^d\).
We can use \eqref{o26.1} to represent
this coupling as
\begin{align}
\d X \quad&=\quad \d B - \nu_X\d L^X\,,\label{o26.10}\\
\d Y \quad&=\quad \left(\mathbb{J}^\top\d B+\mathbb{K}^\top\d A\right)-\nu_Y\d
L^Y\,,
\label{o26.11}
\end{align}
where \(A\) and \(B\) are independent \(d\)-dimensional Brownian motions, and
\(\mathbb{J}\), \(\mathbb{K}\) are predictable \((d\times d)\)-matrix processes
such that \eqref{new26.1} is satisfied.
%
Here \(L^X\) and \(L^Y\) may be viewed informally as the local times of \(X\) and \(Y\)
that have accumulated 
on the boundary.
We interpret the  Brownian particle \(X\) as
the Brownian Lion or pursuer, and the other Brownian particle
\(Y\) as the Brownian Man or evader.
It will be convenient for the following work to suppose that the coupling given in \eqref{o26.10}-\eqref{o26.11}
holds only up to the time \(T^*=\inf\{t\geq0:X(t)=Y(t)\}\) (the time of ``capture''); we define the coupling
for all times \(t\geq T^*\)
by \(Y(t)=X(t)\), with \(X\) satisfying \eqref{o26.10} after time \(T^*\).


The main result of this section, Theorem \ref{thmo24}, is that
a bounded CL domain
cannot support a shy coupling. Most of the work is carried out in Proposition \ref{o26.4}, which is then applied
in the proof of the theorem. We state both the proposition and the theorem first, and then give their proofs. 
These results are related to those in
\citet{BramsonBurdzyKendall-2011}, although the proofs differ in significant details.  
Theorem 1 of \citet{BramsonBurdzyKendall-2011} only holds for ESIC domains that are $\CAT0$, whereas
Theorem \ref{thmo24} here covers the more general
CL domains. 
The latter family includes, for example, star-shaped ESIC domains, and more general ESIC domains that are
mentioned in Example \ref{jan9.3}, neither of which need be \(\CAT0\).
\begin{prop}\label{o26.4} 
Suppose that the CL domain $D$ is 
bounded in the Euclidean metric.
For any $\eps>0$, there exists a $t>0$ such that, for 
any \(X\) and \(Y\) satisfying \eqref{o26.10}-\eqref{o26.11} 
with \(X(0)\), \(Y(0)\in \overline{D}\), 
\begin{align}
\label{o26.4d}
 \Prob{\inf_{0\le s\le t}\idist(X(s),Y(s))\leq \eps\ } \quad>\quad 0\,.
\end{align}
\end{prop}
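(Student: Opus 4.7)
My plan is to reduce the probabilistic question to the deterministic Lion and Man problem via a Girsanov change of measure combined with a small-noise (large-drift) limit, and then to invoke Theorem \ref{thmo13} together with Proposition \ref{o28.1}. By rescaling, I may assume $D$ is $\CAT1$. For simplicity, I first treat the case $\idist(X(0),Y(0)) < \pi/2$; the general case will follow by chaining together finitely many single-step pursuits along an $\eps$-chain in $\ol D$, in analogy with the second part of Theorem \ref{thmo13}.

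Fix $\eps > 0$ and a large parameter $\lambda > 0$, to be chosen later. I apply Girsanov's theorem to define a new measure $\P^\lambda$ under which the driving Brownian motion $B$ acquires the bounded predictable drift $\mu_s = \lambda \chi(X(s),Y(s))$, with $\mu$ truncated once the intrinsic distance leaves a prescribed compact subset of $(0,\pi)$ or a fixed time horizon is exceeded. The Radon-Nikodym derivative $Z = \d\P^\lambda/\d\P$ is then a bounded martingale that is almost surely strictly positive on the relevant time interval. Under $\P^\lambda$, the coupled system \eqref{o26.10}-\eqref{o26.11} reads
\begin{align*}
 \d X &= \lambda\, \chi(X,Y)\,\d s + \d\wt B - \nu_X\,\d L^X,\\
 \d Y &= \lambda\, \mathbb{J}^\top \chi(X,Y)\,\d s + \mathbb{J}^\top \d\wt B + \mathbb{K}^\top \d A - \nu_Y\,\d L^Y,
\end{align*}
where $\wt B$ is a $\P^\lambda$-Brownian motion. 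Crucially, by \eqref{new26.1}, the drift acquired by $Y$ has magnitude at most $\lambda$.

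Next, I rescale time by the factor $\lambda$, setting $\wh X(u) = X(u/\lambda)$ and $\wh Y(u) = Y(u/\lambda)$. In the new time scale, $\wh X$ is a unit-speed pursuer in the simple-pursuit direction, plus a Brownian perturbation of size $\lambda^{-1/2}$ and a reflection term; likewise $\wh Y$ is an admissible curve (of speed at most $1$) plus small noise and a reflection term. Using tightness of the rescaled processes, continuity of $\chi$ on compact subsets of its domain (Proposition \ref{o26.12}), and the stability theory for the Skorokhod problem in ESIC domains (following \citealp{Saisho-1987}), every weak subsequential limit of $(\wh X,\wh Y)$ as $\lambda \to \infty$ lies in the family $\Lambda_s$ of simple pursuits, possibly with the Man's speed strictly less than $1$. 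By Theorem \ref{thmo13}, any such simple pursuit satisfies $\idist(x(u),y(u)) \to 0$, and by Proposition \ref{o28.1} this decay is uniform across $(x,y) \in \Lambda_s$ with $\idist(x(0),y(0)) \le \pi/2$. Thus there is some $T = T(\eps)$ such that, for $\lambda$ sufficiently large,
\[
 \P^\lambda\Bigl[\inf_{0 \le s \le T/\lambda}\idist(X(s),Y(s)) \le \eps\Bigr] \quad\geq\quad \tfrac12\,.
\]
Since $Z > 0$ almost surely, the same event has strictly positive probability under $\P$, which establishes \eqref{o26.4d} with $t = T/\lambda$.

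The hard part will be controlling the small-noise limit in the presence of oblique reflection: one must verify that the reflection terms $\int \nu_X\,\d L^X$ and $\int \nu_Y\,\d L^Y$ converge in a way that lets one identify the limit with a Skorokhod-reflected deterministic simple pursuit, which should follow from the classical stability theory for the Skorokhod map in ESIC domains. A secondary issue is extending the argument to arbitrary initial positions in $\ol D$ when the intrinsic diameter of $D$ exceeds $\pi$; this can be handled by piecing together finitely many single-step pursuits along an $\eps$-chain, as in the second part of Theorem \ref{thmo13}.
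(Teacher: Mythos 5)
Your proposal is correct and follows essentially the same route as the paper: a Cameron--Martin--Girsanov change of measure introducing a large drift $n\chi(X,Y)$ (inducing $n\mathbb{J}^\top\chi$ on $Y$), a time rescaling by the drift parameter, a tightness/weak-limit argument (with Saisho's stability theorem for the Skorokhod problem) identifying the limit as a simple pursuit, and then Theorem \ref{thmo13} together with Proposition \ref{o28.1}, followed by a chaining argument for distant initial positions. The only place the paper does noticeably more work than your sketch is the chaining step, where it must introduce auxiliary adapted Lipschitz pursuit processes $Z^{2,n},\dots,Z^{m-1,n}$ as the intermediate links of the chain.
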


\begin{rem} \rm 
Our proof of Proposition \ref{o26.4} actually yields the following stronger result.
For any $\eps>0$ and all $0 < t_1 < t_ 2 < \infty$,
\begin{align*}
\Prob{\idist(X(s),Y(s))\leq \eps\,\text{ whenever }\,{t_1\leq s\leq t_2} } \quad>\quad 0\,.
\end{align*}
The version (\ref{o26.4d}) suffices for Theorem \ref{thmo24} 
and is needed for Theorem \ref{thm:rubber-band-not-shy}.
\end{rem} 

\begin{thm}
\label{thmo24}
Suppose that the CL domain \(D\) is bounded.
Then, there is no shy 
co-adapted coupling for reflected Brownian motion in \(D\).
\end{thm}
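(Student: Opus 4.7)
The plan is to argue by contradiction, combining Proposition \ref{o26.4} with the strong Markov property. Suppose, contrary to the claim, that the bounded CL domain $D$ admits a shy co-adapted coupling. Then there exist initial points $x_0, y_0 \in \ol D$, some $\eps>0$, and a coupling $(X,Y)$ satisfying \eqref{o26.10}--\eqref{o26.11} such that
\[
p_0 \;:=\; \Prob{\,\inf_{t\ge 0}\dist(X_t,Y_t) \,>\, \eps \mid X_0=x_0,\, Y_0=y_0\,} \;>\; 0 .
\]
I would first apply Proposition \ref{o26.4} with this same $\eps$ to obtain a uniform time $t_0 = t_0(\eps)$ such that, for every co-adapted coupling of reflected Brownian motions in $D$ and every starting pair in $\ol D \times \ol D$, the event $\{\inf_{0\le s\le t_0}\dist(X_s,Y_s)\le\eps\}$ has positive probability.

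The core step is then to iterate via the strong Markov property. For each $k \ge 0$, the shifted process $(X_{kt_0+s}, Y_{kt_0+s})_{s\ge 0}$, conditional on $\mathcal{F}_{kt_0}$, is again a co-adapted coupling of reflected Brownian motions, starting from $(X_{kt_0}, Y_{kt_0}) \in \ol D \times \ol D$, so Proposition \ref{o26.4} applies to it as well. If one can establish a uniform lower bound $p > 0$ such that
\[
\Prob{\,\inf_{kt_0 \le s \le (k+1)t_0} \dist(X_s,Y_s) \le \eps \mid \mathcal{F}_{kt_0}\,} \;\ge\; p \qquad \text{a.s.,}
\]
then induction gives $\Prob{\inf_{0\le s\le kt_0} \dist(X_s,Y_s) > \eps} \le (1-p)^k \to 0$ as $k \to \infty$, contradicting $p_0 > 0$.

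The main obstacle is upgrading the pointwise positivity furnished by Proposition \ref{o26.4} to this uniform lower bound $p$. Compactness of $\ol D \times \ol D$ gives uniformity over starting pairs, but uniformity over the driving matrices $\JJ,\KK$ requires further work. One route is to set $p^* := \sup \Prob{\inf_{t\ge 0}\dist(X_t,Y_t) > \eps}$, with the supremum taken over all starting pairs and co-adapted couplings; under the contradiction hypothesis, $p^* > 0$. Choose a sequence of couplings with shy probability tending up to $p^*$. The strong Markov estimate, namely that the shy probability from a configuration reached at time $t_0$ is bounded by $p^*$, then forces the probability of achieving closeness within time $t_0$ to tend to zero along this sequence. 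A weak subsequential limit of the couplings (built from tightness of the predictable matrix-valued processes $\JJ, \KK$ alongside the driving Brownian motions) would have zero probability of achieving closeness on $[0, t_0]$, directly contradicting Proposition \ref{o26.4}. The delicate point is the weak-compactness argument for co-adapted couplings together with lower semicontinuity of the closeness probability, which is where the principal technical labour will lie.
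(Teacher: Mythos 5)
Your proposal is correct and follows essentially the same route as the paper: apply Proposition \ref{o26.4}, upgrade the pointwise positivity to a bound $p_1>0$ uniform over starting pairs and couplings, and then iterate over blocks $[kt_1,(k+1)t_1]$ via the Markov property to get the $(1-p_1)^k$ decay. The uniformization step that you flag as the ``principal technical labour'' --- weak compactness/tightness of the couplings together with semicontinuity of the closeness probability --- is exactly what the paper outsources to \citet[Proposition 20]{BramsonBurdzyKendall-2011}, whose proof is the Arzela--Ascoli argument you sketch.
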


\begin{proof}[Proof of Proposition \ref{o26.4}]
Since $D$ is CL and is hence ESIC, it can be scaled so that it is $\CAT1$.
We first demonstrate (\ref{o26.4d}) when  $\idist(X(0),Y(0)) \leq \pi/2$, which will provide
motivation for the general case. 

The first step is to alter the stochastic dynamics of the coupled Brownian motions $X$ and $Y$ given  in
\eqref{o26.10}-\eqref{o26.11}  by adding a large drift.  The new equations are given in
\eqref{o26.20}-\eqref{o26.21}; the drift there for the $X^n$ component is given by $n$ times 
the unit tangent vector field \(\chi\) introduced before Proposition \ref{o26.12}, 
and the drift of \(Y^n\) is given by adding the corresponding large drift governed by the product of the
coupling matrix \(\mathbb{J}^\top\) with $\chi$. Setting \(T^{*,n}=\inf\{t\geq0:X^n(t)=Y^n(t)\}\),
for \(t<T^{*,n}\), one has
\begin{align}
 X^n(t) \quad&=\quad X(0)+B(t) 
+\int_0^t n\chi(X^n(s), Y^n(s))\d s - \int_0^t \nu_{X^n(s)}\d L^{X^n}_s\,,\label{o26.20}\\
 Y^n(t) \quad&=\quad Y(0) + \int_0^t \left(\mathbb{J}_s^\top\d B(s)+\mathbb{K}_s^\top\d A(s)\right)  \label{o26.21}\\
& \quad\qquad + \int_0^t n
\mathbb{J}_s^\top\chi(X^n(s),Y^n(s)) \d s 
- \int_0^t \nu_{Y^n(s)}
\d L^{Y^n}_s \nonumber\,.
\end{align}
As after \eqref{o26.10}-\eqref{o26.11}, for \(t>T^{*,n}\), we set \(Y^n(t)=X^n(t)\) and let \(X^n(t)\) evolve
as the ordinary reflected Brownian motion after \(T^{*,n}\).  (Note that $\chi(v,z)$ is not defined for \(v=z\).)
We also set $T^n=\inf\{t\geq 0: \idist(X^n(t) , Y^n(t)) \geq 3\pi/4\}$. Since $\chi(v,z)$ is not necessarily 
uniquely defined if $\idist(v,z) \geq \pi$, we will need to analyze the stopped processes 
$X^n(t \land T^n)$ and $Y^n(t \land T^n)$. 

By the Cameron-Martin-Girsanov theorem, the distributions of the solutions of
\eqref{o26.10}-\eqref{o26.11} and \eqref{o26.20}-\eqref{o26.21} are
mutually absolutely continuous on every interval $[0, T^n \land T^{*,n}\land k]$, for $k<\infty$. 
On the other hand, as we will show, after rescaling time and taking \(n\) to be
very large, the paths of $(X^n(\cdot), Y^n(\cdot))$
can be viewed as being uniformly close to those for the corresponding Lion and Man problem.  
Since \(D\) is assumed to be a CL domain,
this will allow us to apply Theorem \ref{thmo13} to establish \eqref{o26.4d}. 

We rescale time by making the substitutions
\(X^n(t)=\widetilde{X}^n(nt)\),
\(Y^n(t)=\widetilde{Y}^n(nt)\), \(B(t)=\widetilde{B}^n(nt)/\sqrt{n}\),
\(A(t)=\widetilde{A}^n(nt)/\sqrt{n}\),
\(\mathbb{J}(t)={\widetilde{\mathbb{J}}^{(n)}}(nt)\),
\(\mathbb{K}(t)={\widetilde{\mathbb{K}}^{(n)}}(nt)\).
Then \eqref{o26.20}-\eqref{o26.21} take the form
\begin{align}
\wt X^n(t) \quad&=\quad X(0)+\frac{1}{\sqrt{n}} \wt B^n(t) 
+\int_0^t \chi(\wt X^n(s),\wt Y^n(s))\d s - \int_0^t \nu_{\wt X^n(s)}\d L^{\wt X^n}_s\,,\label{o26.22}\\
\wt Y^n (t) \quad&=\quad Y(0)+ \frac{1}{\sqrt{n}}
\int_0^t \left((\widetilde{\mathbb{J}}_s^{(n)})^\top\d
\widetilde{B}^n(s)+(\widetilde{\mathbb{K}}_s^{(n)})^\top\d \widetilde{A}^n(s)\right)\label{o26.23}\\
& \qquad +
\int_0^t(\widetilde{\mathbb{J}}_s^{(n)})^\top
\chi(\widetilde{X}^n(s),
\widetilde{Y}^n(s))\d s- \int_0^t\nu_{\widetilde{Y}^n(s)}
\d L^{\widetilde{Y}^n_s}\,.
\nonumber
\end{align}
As before, for \(t>\wt T^{*,n}=\inf\{t\geq0: \wt X^n(t)=\wt Y^n(t)\}\), we 
set \(\wt Y^n(t)=\wt X^n(t)\) and let \(\wt X^n(t)\) evolve
as ordinary reflected Brownian motion after \(\wt T^{*,n}\).
Note that $\wt B^n$ and $\wt A^n$ are standard Brownian motions.
Corresponding to the previous definition of \(T^n\), we define stopping times 
$\wt T^n=\inf\{t\geq 0: \idist(\wt X^n(t) ,\wt Y^n(t)) \geq 3\pi/4\}$
and analyze the stopped processes $\wt X^n(t \land \wt T^n)$ and 
$\wt Y^n(t \land \wt T^n)$. 

Now, consider the analog of \eqref{o26.22}-\eqref{o26.23}, but without boundary:
\begin{align}
\wt U^n(t) \quad&=\quad \frac{1}{\sqrt{n}} \wt B^n(t) 
+\int_0^t \chi(\wt X^n(s),\wt Y^n(s))\d s \,,\label{o26.30}
\\
\wt V^n (t)\quad&=\quad  \frac{1}{\sqrt{n}}
\int_0^t \left((\widetilde{\mathbb{J}}_s^{(n)})^\top\d
\widetilde{B}^n(s)+(\widetilde{\mathbb{K}}_s^{(n)})^\top\d \widetilde{A}^n(s)\right) \label{o26.31}
+ \int_0^t(\widetilde{\mathbb{J}}_s^{(n)})^\top
\chi(\widetilde{X}^n(s),
\widetilde{Y}^n(s))\d s\,.
\end{align}
The criterion of \citet[\S1.4]{StroockVaradhan-1979} establishes tightness of the sextuplet
\begin{align}
\bH^n(t) = \Big(&\wt U^n(t) , \frac{1}{\sqrt{n}} \wt B^n(t) , 
\int_0^t \chi(\wt X^n(s),\wt Y^n(s))\d s \,,\label{o27.3}
\\
&\wt V^n (t),  \frac{1}{\sqrt{n}}
\int_0^t \left((\widetilde{\mathbb{J}}_s^{(n)})^\top\d
\widetilde{B}^n(s)+(\widetilde{\mathbb{K}}_s^{(n)})^\top\d \widetilde{A}^n(s)\right) ,
 \int_0^t(\widetilde{\mathbb{J}}_s^{(n)})^\top
\chi(\widetilde{X}^n(s),
\widetilde{Y}^n(s))\d s \Big)\,,\nonumber
\end{align}
since
the diffusion coefficients and the drifts are bounded by \(1\). So, there exists an appropriate
subsequence of $\bH^n$ that converges weakly (in the uniform metric) to a
limiting process $\bH^\infty$.
In a harmless abuse of notation, we re-index, denoting this subsequence by $\{\bH^n:n\geq1\}$. 
In particular, $\wt U^n(t)$ and $\wt V^n(t)$ converge 
weakly so, by
\citet[Thm. 4.1]{Saisho-1987},
 \((\wt X^n,\wt Y^n)\) converges weakly to a limiting continuous process
\((\wt X^\infty, \wt Y^\infty)\) along the same subsequence. It follows
that the octuplet
\begin{align}
\bK^n(t) = \Big(&\wt X^n(t),\wt Y^n(t), \wt U^n(t) , \frac{1}{\sqrt{n}} \wt B^n(t) , 
\int_0^t \chi(\wt X^n(s),\wt Y^n(s))\d s \,,\label{o27.4}
\\
&\wt V^n (t),  \frac{1}{\sqrt{n}}
\int_0^t \left((\widetilde{\mathbb{J}}_s^{(n)})^\top\d
\widetilde{B}^n(s)+(\widetilde{\mathbb{K}}_s^{(n)})^\top\d \widetilde{A}^n(s)\right) ,
 \int_0^t(\widetilde{\mathbb{J}}_s^{(n)})^\top
\chi(\widetilde{X}^n(s),
\widetilde{Y}^n(s))\d s \Big)\nonumber
\end{align}
is tight, and therefore converges weakly along a further subsequence. Once again,
we commit a harmless abuse of notation and re-index, denoting the weakly converging subsequence by $\{\bK^n:n\geq1\}$. We
now employ the
Skorokhod representation of weak convergence to construct the sequence of $\bK^n$ on the same probability space so that it converges
almost surely, uniformly on compact intervals.

The fourth and seventh components of $\bK^n$ are Brownian motions run at rate
\(\tfrac{1}{n}\), so they each converge to the zero
process as $n \rightarrow \infty$. 
The fifth and eighth components of $\bK^n$
are both Lip\((1)\);
their limits are therefore also  Lip\((1)\). These observations
and \eqref{o26.30}-\eqref{o26.31} imply that the limits
\(\wt V^\infty\) and \(\wt U^\infty\) of \(\wt V^n\) and \(\wt U^n\) are also \(\text{Lip}(1)\).

Let $\wt T^\infty = \liminf_{n\to \infty} \wt T^{n}$
and $\wt T^* = \inf\{t\geq 0: \wt X^\infty(t) = \wt Y^\infty(t)\}$, 
and note that \(\wt T^* \leq \liminf_{n\rightarrow\infty} \wt T^{*,n}\).
 We will argue that
\begin{align}\label{o26.60}
 \wt U^\infty(t) &= \int_0^t \chi(\wt X^\infty(s),\wt Y^\infty(s))\d s \qquad \text{  for  } t < 
\wt T^\infty \land \wt T^*, \\
 \wt T^\infty &= \infty \quad \text{ a.s.}, \label{o27.1}\\
\wt X^\infty(t) & = \wt Y^\infty(t)  
\text{  for  } t \geq  \wt T^*. \label{o27.2}
\end{align}

The bounded vector field \(\chi(\wt X^n(t),\wt Y^n(t))\) depends continuously
on \(\wt X^n(t)\) and \(\wt Y^n(t)\), over $[0, \wt T^n \land \wt T^{*,n})$, 
by Proposition~\ref{o26.12}. Hence, by 
the bounded convergence theorem, we can pass to the limit in \eqref{o26.30} on the interval
$[0, \wt T^\infty \land \wt T^*)$, with
the limit satisfying \eqref{o26.60} on $[0, \wt T^\infty \land \wt T^*)$. By Proposition
\ref{o26.40}, for given $X(0)$ and $Y(0)$ with 
$\idist(X(0),Y(0))< \pi$ and $\{\wt Y^\infty(t), t\geq 0\}$, \eqref{o26.60} 
defines a unique function 
$\{\wt U^\infty(t), t\in [0, \wt T^\infty \land \wt T^*)\} = \{\wt X^\infty(t), t\in [0, \wt T^\infty \land \wt T^*)\}$. 

Since \eqref{o26.60}  holds for $t \in[0, \wt T^\infty \land \wt T^*)$,
\(\wt X^\infty\) conducts a simple pursuit of \(\wt Y^\infty\) over this time period. 
As noted above Theorem \ref{thmo13}, 
it follows that $t\to \idist(\wt X^\infty(t),\wt Y^\infty(t))$ is non-increasing on this interval. 
Consequently, $\sup_{0 \leq t \leq \wt T^\infty \land \wt T^*} \idist(\wt X^\infty(t),\wt Y^\infty(t)) 
\leq \idist(\wt X^\infty(0),\wt Y^\infty(0)) \leq \pi/2$. Since 
$(\wt X^n,\wt Y^n)$ converges a.s. to $(\wt X^\infty,\wt Y^\infty)$ 
uniformly on compact intervals, we conclude that, for large 
$n$, $\sup_{0 \leq t \leq \wt T^\infty \land \wt T^*} \idist(\wt X^n(t),\wt Y^n(t)) \leq 5\pi/8 < 3\pi/4$. 
This contradicts the definition of $\wt T^\infty$ unless either 
\(\wt T^\infty=\infty\) or $\wt T^* < \wt T^\infty$.

Suppose that $\wt T^*<\wt T^\infty$ and $\wt X^\infty(t) \ne \wt Y^\infty(t)$ for some
$t \in (\wt T^* , \wt T^\infty)$. 
Since the processes $\wt X^\infty(t) $ and $ \wt Y^\infty(t)$
are continuous, this implies that there exist $\wt T^* < t_1 < t_2 < \wt T^\infty$ such that
$0< \idist(\wt X^\infty(t_1),\wt Y^\infty(t_1)) < \idist(\wt X^\infty(t_2),\wt Y^\infty(t_2))$
and $\inf_{t_1 \leq t \leq t_2} \idist(\wt X^\infty(t),\wt Y^\infty(t)) >0$. Therefore, 
\(t_2<\liminf T^{*,n}\).
Arguing in the same way as for \eqref{o26.60}, it follows that \(\wt X^\infty\) 
conducts a simple pursuit of \(\wt Y^\infty\) over the
interval $[t_1,t_2]$, and therefore 
$t\to \idist(\wt X^\infty(t),\wt Y^\infty(t))$ is non-increasing on this interval.
This is a contradiction, so we conclude that $\wt X^\infty(t) = \wt Y^\infty(t)$ for 
$t \in (\wt T^* , \wt T^\infty)$.
This completes the proof of \eqref{o26.60}-\eqref{o27.2} and shows that $\wt X^{\infty}$ conducts
a simple pursuit of $\wt Y^{\infty}$ over the interval $[0,\wt T^*)$.

Fix an arbitrarily small $\eps>0$. 
Since the CL domain $D$ is $\CAT1$ and $\idist(\wt X^{\infty}(0),\wt Y^{\infty}(0)) \le \pi/2$,
it follows from Theorem \ref{thmo13} that it is impossible for $\wt Y^{\infty}$ to successfully evade
$\wt X^{\infty}$ over the time interval $[0,\infty)$.  Moreover, since this holds for all such $\wt X^{\infty}(0)$ and 
$\wt Y^{\infty}(0)$, application of 
Proposition \ref{o28.1} implies that there exists $t_1 < \infty$, 
not depending on either $\wt X^{\infty}(0),\wt Y^{\infty}(0)$, $\omega$ or the particular simple pursuit in
$\Lambda_s$ of the pair ($\wt X^{\infty}, \wt Y^{\infty}$), 
such that, for $t\geq t_1$, 
\begin{align}
\label{new6.16}
\idist(\wt X^\infty(t),\wt Y^\infty(t)) \leq \eps/2.
\end{align}

Because of the uniform convergence of $(\wt X^n, \wt Y^n)$ to $(\wt X^\infty, \wt Y^\infty)$ over finite intervals, it follows from
(\ref{new6.16}) that, for some $n_0<\infty$ depending on $X(0)$ and $Y(0)$, and all $n\geq n_0$,
\begin{align*}
\Prob{
\idist(\wt X^n(t_1),\wt Y^n(t_1)) \leq \eps} \quad>\quad 0\,.
\end{align*}
Changing the clock back to the original pace, we obtain
\begin{align*}
\Prob{
 \idist( X^n(t_1/n), Y^n(t_1/n)) \leq \eps} \quad>\quad0\,.
\end{align*}
By the Cameron-Martin-Girsanov theorem,
\begin{align}\label{o28.2}
\Prob{
\idist( X(t_1/n), Y(t_1/n)) \leq \eps} \quad>\quad0\,.
\end{align}
This implies (\ref{o26.4d}) when $\idist(X(0),Y(0)) \leq \pi/2$.

We now consider (\ref{o26.4d}) for $X(0)=x_0$, $Y(0) = y_0$, and
arbitrary $x_0, y_0 \in \ol D$.  The reasoning is similiar to the case where
$\idist(X(0),Y(0)) \leq \pi/2$, after constructing a chain of points, each of which is within distance
$\pi/2$ of its immediate neighbors. 

Choose a sequence of points $z_1, z_2, \dots , z_m \in \ol D$ such that 
$z_1 = x_0$, $z_m = y_0$ and $\idist(z_k, z_{k+1}) \leq \pi/2$ for all $1\leq k \leq m-1$.
For \(n\geq1\) and \(k=1, \ldots, m\), we define the chain of random 
processes $Z^{k,n}$ in $D$, with $Z^{k,n}(0) = z_k$.  
We set \(Z^{1,n}\equiv X^n\) and \(Z^{m,n}\equiv Y^n\), but with the drift \(n\chi(X^n(s),Y^n(s))\)
replaced by \(n\chi(X^n(s),Z^{2,n}(s))=n\chi(Z^{1,n}(s),Z^{2,n}(s))\) 
for both processes.  For \(k=2, \ldots, m-1\), 
\(Z^{k,n}\) denotes the process that conducts a simple pursuit directed toward
\(Z^{k+1,n}\), but carried out at rate \(n\). The corresponding stochastic system is given by
\begin{align}
 Z^{1,n}(t) \quad&=\quad X(0)+B(t) 
+\int_0^t n\chi(Z^{1,n}(s),Z^{2,n}(s))\d s - \int_0^t \nu_{Z^{1,n}(s)}\d L^{Z^{1,n}}_s\,,\label{eq:chain-1}\\
&\ldots\nonumber\\
Z^{k,n}(t) \quad&=\quad Z^{k.n}(0) +  \int_0^t n\chi(Z^{k,n}(s),Z^{k+1,n}(s))\d s\,, \label{eq:chain-k}\\
&\ldots\nonumber\\
 Z^{m,n}(t) \quad&=\quad Y(0) + \int_0^t \left(\mathbb{J}_s^\top\d B(s)+\mathbb{K}_s^\top\d A(s)\right)  \label{eq:chain-m}\\
& \quad\qquad + \int_0^t n
\mathbb{J}_s^\top\chi(Z^{1,n}(s),Z^{2,n}(s)) \d s 
- \int_0^t \nu_{Z^{m,n}(s)}
\d L^{Z^{m,n}}_s \nonumber\,.
\end{align}
Since \(Z^{2,n}\), \ldots, \(Z^{m-1,n}\) are simple pursuits run at rate \(n\) 
and directed toward adapted processes, they are Lipschitz\((n)\) adapted random processes. 
Also, for \(k=2, \ldots, m-2\), \(\idist(Z^{k,n}(t),Z^{k+1,n}(t))\) is non-increasing in time.
(No reflection term is required in (\ref{eq:chain-k}) since $Z^{k,n}$, for $k=2,\ldots,m-1$, will
never attempt to cross the boundary.)

The system (\ref{eq:chain-1})-(\ref{eq:chain-m}) is run up until the time 
\[
S^n\;=\inf\{t\geq0:\idist(\;Z^{1,n}(t),Z^{2,n}(t))\geq 3\pi/4\} \;\land\; \inf\{t\geq0: \idist(Z^{m-1,n}(t),Z^{m,n}(t))\geq 3\pi/4\}\,.
\]
For \(k=2, \ldots, m-2\), we set \(Z^{k,n}(t)=Z^{k+1,n}(t)\) when
\(t\geq\inf\{s:Z^{k,n}(s)=Z^{k+1,n}(s)\}\). 
We adopt the convention that \(\chi(Z^{1,n}(t),Z^{2,n}(t))=0\) when \(Z^{1,n}(t)=Z^{2,n}(t)\)
and \(\chi(Z^{m-1,n}(t),Z^{m,n}(t))=0\) when \(Z^{m-1,n}(t)=Z^{m,n}(t)\).
(Almost surely, the set of times \(t\) at which either of the latter two equalities occurs 
has measure zero since, in either case, one process is a 
Brownian motion with drift and the other is a Lipschitz process.)

Rescaling time as in \eqref{o26.22}-\eqref{o26.23}, we obtain the system
\begin{align}
 \wt Z^{1,n}(t) \quad&=\quad X(0)+\frac{1}{\sqrt{n}}\wt B^n (t) 
+\int_0^t \chi(\wt Z^{1,n}(s),\wt Z^{2,n}(s))\d s - \int_0^t \nu_{\wt Z^{1,n}(s)}\d L^{\wt Z^{1,n}}_s\,,\label{eq:chain-1-retimed}\\
&\ldots\nonumber\\
 \wt Z^{k,n}(t) \quad&=\quad Z^{k.n}(0) +  \int_0^t \chi(\wt Z^{k,n}(s),\wt Z^{k+1,n}(s))\d s\,, \label{eq:chain-k-retimed}\\
&\ldots\nonumber\\
 \wt Z^{m,n}(t) \quad&=\quad Y(0) + \frac{1}{\sqrt{n}}\int_0^t \left(\wt{\mathbb{J}}_s^\top\d \wt B^n(s)+\wt{\mathbb{K}}_s^\top\d \wt A^n(s)\right)  \label{eq:chain-m-retimed}\\
& \quad\qquad + \int_0^t 
\wt{\mathbb{J}}_s^\top\chi(\wt Z^{1,n}(s),\wt Z^{2,n}(s)) \d s 
- \int_0^t \nu_{\wt Z^{m,n}(s)}
\d L^{\wt Z^{m,n}}_s \nonumber\,,
\end{align}
which is run up until time 
\[
\wt S^n\;=\;
\inf\{t\geq0:\idist(\wt Z^{1,n}(t),\wt Z^{2,n}(t))\geq 3\pi/4\} \;\land\; \inf\{t\geq0: \idist(\wt Z^{m-1,n}(t),\wt Z^{m,n}(t))\geq 3\pi/4\}\,,
\]
and which follows the conventions noted earlier when two processes coincide.

We now argue as in the case where $\idist(X(0),Y(0)) \leq \pi/2$, letting 
\(n\to\infty\) through a subsequence so that the system of solutions to \eqref{eq:chain-1-retimed}-\eqref{eq:chain-m-retimed} converges weakly to a chain of simple pursuits \(\wt Z^{1,\infty}\), \ldots, \(\wt Z^{m,\infty}\) commencing at \(z_1\), \ldots, \(z_m\).  For $k=1$, with
$\wt Z^{1,n}$ and $\wt Z^{2,n}$, the reasoning is almost
the same as before; although the process $\wt Z^{2,n}$ is different than $\wt Y^n$, in both cases their
drifts are at most $1$, and as $n\rightarrow\infty$, both result in a simple pursuit.  The steps
$k=2,\ldots,m-1$ are easier to see since, for each $n$, $\wt Z^{k,n}$ already conducts a (random)
simple pursuit of $\wt Z^{k+1,n}$.  For step $m-1$, $\wt Z^{m-1,n}$ has drift $1$ and the drift of
$\wt Z^{m,n}$ is at most $1$ and so, as $n\rightarrow\infty$, one again obtains a simple pursuit. 

As before, \(\wt S^n\to \infty\). Fixing \(\eps>0\), it follows from
Theorem \ref{thmo13} and Proposition \ref{o28.1}, as before, that there exists $t_1 < \infty$, 
not depending on $X(0),Y(0)$, $\omega$ or the particular limiting simple pursuit in $\Lambda_s$,
such that, for all $t\geq t_1$ and \(k=1,\ldots,m-1\),
\begin{align*}
\idist(\wt Z^{k,\infty}(t),\wt Z^{k+1,\infty}(t)) \leq \eps/(2(m-1)). 
\end{align*}
It therefore follows that, for some \(t_1>0\), \(n_0\), and all \(n>n_0\),
\begin{align*}
& \Prob{\idist( Z^{1,n}(t_1/n), Z^{m,n}(t_1/n)) \leq \eps} \quad = \quad 
 \Prob{\idist(\wt Z^{1,n}(t_1), \wt Z^{m,n}(t_1)) \leq \eps} 
\quad\geq\quad \\
& \Prob{ \idist(\wt  Z^{k,n}(t_1), \wt Z^{k+1,n}(t_1)) \leq \eps/(m-1) 
\text{ for } k=1, \ldots, m-1 } 
\quad>\quad 0\,.
\end{align*}
Consequently, by the Cameron-Martin-Girsanov theorem,
\begin{equation}\label{o28.4}
\Prob{
\idist( X(t_1/n), Y(t_1/n)) \leq \eps} \quad>\quad 0\,.
\end{equation}
This implies (\ref{o26.4d}) for $X(0)=x_0$, $Y(0)=y_0$, and arbitrary $x_0,y_0 \in \ol D$. 
\end{proof}

Theorem \ref{thmo24} states that shyness fails for CL domains.  
The proof requires establishing a uniform lower bound on the probability that shyness fails, 
over the different possible starting positions of \(X\) and \(Y\).
For this, we employ \citet[Proposition 20]{BramsonBurdzyKendall-2011},
which states the following.  Consider a (bounded) ESIC domain.
Suppose that \(\Prob{\inf_{0\leq t\leq t_1}|X(t)-Y(t)|\leq \eps}>0\) for some \(\eps>0\) and \(t_1>0\), for any coupled pair of 
Brownian motions \(X\) and \(Y\) with arbitrary starting points \(X(0), Y(0) \in D\). Then 
\begin{align}\label{o28.5}
\Prob{
\idist( X(t), Y(t)) \le \eps \text{ for some \(t\) with }{0 \leq t \leq t_1} } \quad\geq\quad p_1\,,
\end{align}
for some $t_1$ and $p_1>0$ not depending on $X(0)$ and $Y(0)$. The proof is 
based on an Arzela-Ascoli argument exploiting tightness of the processes and compactness 
of \(\overline{D}\).
 
\begin{proof}[Proof of Theorem \ref{thmo24}]
By  Proposition \ref{o26.4} and 
\citet[Proposition 20]{BramsonBurdzyKendall-2011}, \eqref{o28.5} holds true.
The remainder of the argument consists of an elementary iteration argument.
Consider processes $X$ and $Y$ starting from any pair of points in $\ol D$ and corresponding to any 
choice of $\JJ$ and $\KK$.
Because of the uniform bound in \eqref{o28.5}, the probability of $X$ and $Y$ not coming within
distance $\eps$ of each other on the interval $[kt_1, (k+1)t_1]$, conditional on not coming within this distance
before $kt_1$, is bounded above by $1-p_1$ for any $k$, by the Markov property. Hence, the probability
of $X$ and $Y$ not coming within distance $\eps$ of each other on the interval $[0, kt_1]$ is bounded above
by $(1-p_1)^k$. Letting $k\to \infty$, it follows that $X$ and $Y$ are not $\eps$-shy. Since $\eps$ can be
taken arbitrarily small, the proof is complete.
\end{proof}

\section{Domains with a stable rubber band, but no shy coupling}\label{sec:anexample} 
In this section, we exhibit a family of domains possessing stable rubber bands, but nevertheless
supporting no shy couplings.  Since these domains are not CL domains, these
examples complement Theorem \ref{thmo24}.
The family of domains is constructed by appending
to a domain possessing a stable rubber band another
(typically much larger) domain, so that the combined domain has the same 
stable rubber band but supports no shy coupling. The precise result is stated
in Theorem \ref{thm:rubber-band-not-shy}, which is the main result of the section.

%

For each of our examples,
we consider a bounded ESIC domain $D_1 \subset \mathbb{R}^d$, $d\geq 2$, 
that possesses a stable rubber band.
The larger domain is produced by appending a long thin cuboid to 
$D_1$. 
Some care needs to be taken to ensure that
the resulting domain still satisfies the uniform exterior sphere 
and uniform interior cone conditions, which requires us to impose some conditions on the boundary
of $D_1$. 

Rather than attempting to provide a more general result, for the sake of simplicity,
we suppose that there is a point \(p\) on the boundary \(\partial D_1\) such that, for some $r>0$, $\prt D_1 \cap \ball(p,r)$
is the graph of a $C^1$-function (in an appropriate orthonormal coordinate system), 
and that $D_1$ lies totally on one side of the hyperplane that is tangent to $\prt D_1$ at $p$; we further suppose that the distance from \(p\) to the stable rubber band is at least $2r$.
Translating and rotating the domain as necessary, we may suppose that the point \(p\) on the boundary is given by \((0,\dots,0,-a)\) for some $a\in(0, r/\sqrt{d} )$, and that the 
supporting hyperplane is \(\{x:x_d=-a\}\),
with
the open set \(D_1\) lying below this hyperplane.
We assume that $a$ is small enough so that 
\begin{align*}
\prt D_1 \cap \big((-a,a) \times \ldots \times (-a,a) \times (-2a, 0)\big)
\subset 
(-a,a) \times \ldots \times (-a,a) \times (-3a/2, 0).
\end{align*}
It is elementary to see that, for arbitrarily large $L$, there exists a domain $D$ such that 
\begin{enumerate}
\item
$D \cap (\{x:x_d<-a\} \setminus \ball(p,r)) 
=  D_1 \cap (\{x:x_d<-a\} \setminus \ball(p,r)) $,
\item
$\{x\in D : x_d >0\}  = 
(-a,a) \times \ldots \times (-a,a) \times (0, L)$,
\item
$(-a,a) \times \ldots \times (-a,a) \times (-2a, L)
\subset D$,
\item
\(D\) satisfies both the uniform exterior sphere 
and uniform interior cone conditions.
\end{enumerate}
It follows from the uniform exterior sphere and uniform interior cone conditions that
reflected Brownian motion on
$D$ is strong Markov, with normalized Lebesgue measure as its equilibrium probability measure 
(see, e.g., \citealp{BurdzyChen-1998}).

Heuristically speaking, $D$ is created by attaching 
a long thin cuboid $(-a,a) \times \ldots \times (-a,a) \times (-2a, L)$ to $D_1$ and smoothing the boundary
so that the sharp edges are only pointing outside the domain.
Note that \(L\)
can be increased arbitrarily without altering the construction close to \(D_1\).
Re-scaling the domain if necessary,
we may suppose that $a = 1/2$, and therefore that the intersection of $D$ with $\{x : x_d >0\}$ is $D_2 = (-1/2,1/2) \times \ldots \times (-1/2,1/2) \times (0, L)$.
We will assume that
\begin{align}\label{kbj1.1}
L \quad>\quad 512 d^2
\end{align}
and that
\begin{align}\label{eq:cuboid-size}
|\{x\in D: x_d < L/16\}| \quad<\quad |D|/8\,.
\end{align}

We now state the main result of the section.
\begin{thm}  
\label{thm:rubber-band-not-shy}
Suppose the domain $D$ is defined as above, by enlarging a given ESIC domain $D_1$ by 
appending a long cuboid.  This new domain \(D\) supports no shy co-adapted coupling
for reflected Brownian motion.  
\end{thm}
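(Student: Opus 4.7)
The strategy is as follows. Although the domain $D$ fails to be a CL domain (the stable rubber band inherited from $D_1$ persists in $D$ by Remark~\ref{rem:stability}), the bulk of $D$ by Lebesgue measure lies in the convex cuboid $D_2 = (-1/2, 1/2)^{d-1} \times (0, L)$, which is trivially a bounded CL domain. My plan is to transfer the no-shyness conclusion of Theorem~\ref{thmo24} from $D_2$ to $D$, by combining ergodicity with an auxiliary coupling that agrees with the original coupling so long as both processes remain deep in the cuboid.

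First, since $D$ is ESIC, reflected Brownian motion in $D$ is positive recurrent with stationary distribution proportional to Lebesgue measure (Saisho). Condition \eqref{eq:cuboid-size} implies that the stationary mass of the upper cuboid region $U = \{x\in D : x_d > L/16\}$ exceeds $7/8$. By ergodicity, both $X$ and $Y$ lie in $U$ simultaneously with asymptotic time-density at least $3/4$; in particular, with probability bounded below uniformly in initial data, the pair enters $U\times U$ within some deterministic time. Combined with the strong Markov property, it suffices to establish the following uniform coupling lemma: there exist $t_1 > 0$ and $p_1 > 0$ such that, for any co-adapted coupling $(X,Y)$ in $D$ with $X(0), Y(0) \in U$ and any sufficiently small $\varepsilon > 0$,
\[
\Prob{\inf_{0 \le s \le t_1} \idist(X(s), Y(s)) \le \varepsilon} \;\ge\; p_1.
\]

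To prove the lemma, I would construct an auxiliary co-adapted coupling $(X', Y')$ of reflected Brownian motions in $D_2$, using the same driving Brownian motions and coupling matrices $\mathbb{J}$, $\mathbb{K}$ as $(X, Y)$, so that $(X', Y')$ agrees with $(X, Y)$ on the stochastic interval $[0, \tau)$, where $\tau = \inf\{s : X(s)\notin U \text{ or } Y(s)\notin U\}$, and extends in some arbitrary co-adapted way in $D_2$ thereafter. Since $D_2$ is convex, it is $\CAT{0}$ and a bounded CL domain (with contractibility constant of order $1/L$ from the radial contraction). Applying Theorem~\ref{thmo24} to $D_2$, together with the uniform-lower-bound result of \citet[Proposition 20]{BramsonBurdzyKendall-2011}, yields positive constants $t_1, p_1$, depending on $D_2$ and $\varepsilon$ but not on the coupling or on the starting points in $U$, with $\Prob{\inf_{0\le s \le t_1} \idist(X'(s), Y'(s)) \le \varepsilon} \ge p_1$. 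On the event $\{\tau \ge t_1\}$ one has $X=X'$ and $Y=Y'$ on $[0, t_1]$, so the lemma will follow as soon as $\Prob{\tau < t_1} < p_1/2$.

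Controlling $\Prob{\tau < t_1}$ is the final step: since $X$ is marginally a reflected Brownian motion in $D$, a standard Gaussian tail bound applied to the $d$-th coordinate $X_d$ starting above $L/16$ shows that the probability of $X_d$ reaching $L/16$ within time $t_1$ is exponentially small in $L^2/t_1$, and similarly for $Y$. The condition \eqref{kbj1.1}, $L > 512 d^2$, is calibrated precisely so that this exit probability is dominated by $p_1$. The theorem then follows by an elementary iteration argument as at the end of the proof of Theorem~\ref{thmo24}. I expect the main obstacle to be the quantitative balancing between $t_1$ (which grows with $L$, since the diameter of $D_2$ does) and the exit probability; to handle this, it may be preferable to replace $D_2$ by a sub-cuboid of fixed diameter inside $U$, so that $t_1$ can be chosen independently of $L$, and then to use ergodicity to show that both $X$ and $Y$ simultaneously enter such a sub-cuboid with positive probability.
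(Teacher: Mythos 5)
Your first and last steps match the paper's architecture: Lemma \ref{lem:equilibrium} is exactly your equilibrium/ergodicity step (getting both particles into $U=\{x_d\ge L/16\}$ by a deterministic time with probability $\ge 1/2$), and the final iteration via the analogue of \citet[Proposition 20]{BramsonBurdzyKendall-2011} is also as in the paper. The gap is in the middle step, where you try to reduce to Theorem \ref{thmo24} applied to the cuboid $D_2$ via an auxiliary coupling $(X',Y')$.

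The reduction does not close, for two related reasons. First, the constants $t_1$ and $p_1$ that come out of Proposition \ref{o26.4} and the compactness argument of \citet[Proposition 20]{BramsonBurdzyKendall-2011} applied to $D_2$ are entirely non-explicit (they arise from an Arzela--Ascoli/tightness argument and from Proposition \ref{o28.1}), so there is no way to ``calibrate'' $L$ so that the Gaussian exit estimate $\Prob{\tau<t_1}$ is dominated by $p_1$; moreover both $t_1$ and $p_1$ themselves depend on $L$ through $D_2$, and if $t_1$ is large then $\Prob{\tau<t_1}$ is close to $1$, not small. (The hypothesis $L>512d^2$ in \eqref{kbj1.1} is not a tail calibration; in the paper it guarantees $L/16-16i^2>4\gamma_i$, i.e., that the Man cannot be driven out of the cuboid during the deterministic pursuit.) Second, and more fundamentally, even if $\Prob{\tau<t_1}$ were small you would need the closeness event $\{\inf_{s\le t_1}\idist(X',Y')\le\eps\}$ and the no-exit event $\{\tau\ge t_1\}$ to intersect with positive probability. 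The black-box statement of Theorem \ref{thmo24}/Proposition \ref{o26.4} for $D_2$ gives no control over \emph{where} in $D_2$ the processes are when they come close; the generic simple-pursuit drift used there may well drive the pair down toward $x_d=0$ (and hence, for the true processes, into $D_1$ where the stable rubber band allows evasion). Your fallback of a fixed-size sub-cuboid makes this worse, since the auxiliary and true processes then decouple on an $O(1)$ time scale with no comparison of constants. This is precisely why the paper does not reduce to Theorem \ref{thmo13}/\ref{thmo24} but instead constructs the bespoke coordinate-by-coordinate pursuit fields $\chi^{(i)}$ and proves Lemmas \ref{lem:iteration} and \ref{lem:iteration-d}, whose whole content is a \emph{joint} guarantee: the Lion closes the $i$-th coordinate gap within time $1/\delta$ (resp.\ $L/\delta$) \emph{and} the Man's $d$-th coordinate cannot drop by more than $4\gamma_i$ in the meantime, so both players provably remain in the upper cuboid throughout the chase. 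The Girsanov/rescaling machinery is then run with these specific drifts and the inductive events $A_i$ track the needed localization. To repair your argument you would have to supply an equivalent quantitative, location-controlled pursuit lemma; citing Theorem \ref{thmo24} for $D_2$ cannot substitute for it.
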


Before going into details, we describe the general plan for the proof of Theorem \ref{thm:rubber-band-not-shy}. Consider a coupling of two reflecting Brownian motions $X$ and $Y$ in \(D\).
For sufficently large \(t\), the two processes
\(X(t)\) and \(Y(t)\), when viewed separately, will be approximately in 
statistical equilibrium, and hence their marginal distributions will each approximate the normalized volume measure.
As a consequence of inequality \eqref{eq:cuboid-size}, it will follow 
(see Lemma \ref{lem:equilibrium}) that there is a positive probability of both 
\(X(t)\) and \(Y(t)\) lying in the part of the cuboid
$(-1/2,1/2) \times \ldots \times (-1/2,1/2) \times (L/16, L)$. 

Next consider the Lion and Man pursuit problem in the long cuboid $D_2$.
For each coordinate \(i\leq d-1\), 
we will produce a pursuit strategy 
given by a continuous vector field under which the Lion tracks the
Man closely in the coordinates \(1\), \ldots, \(i-1\), while approaching the Man in coordinate \(i\). This can moreover be done without the Man being able to move very much in the $d$th coordinate and, in particular,
before either the Lion or the Man leaves $D_2$
(see Lemma \ref{lem:iteration}).

A similar strategy (see Lemma \ref{lem:iteration-d}), but with respect to the
coordinate \(i=d\), results in the Lion approaching the Man in the $d$th coordinate 
while tracking the Man closely in the other \(d-1\) coordinates, and before either the Lion or 
the Man leaves $D_2$.

Employing this pursuit by the Lion of the Man, we will then argue, as in Section \ref{sec:shy}, 
that shyness must fail for the Brownian problem.

We now state and prove the three lemmas, in preparation of the proof of
Theorem \ref{thm:rubber-band-not-shy}.
\begin{lem}
\label{lem:equilibrium}
For large enough $u_0$,
all $(x,y)\in \overline{D}$, and any 
reflected Brownian motions $X$ and $Y$ on $D$ defined on the same probability space, with $X(0)=x$ and $Y(0)=y$,
\begin{equation}
\label{eqnsc2.1}
\Prob{X_d(u_0) \ge L/16 \text{ and } Y_d(u_0) \ge L/16} \geq 1/2 \,.
\end{equation}
\end{lem}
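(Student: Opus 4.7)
The plan is to exploit ergodicity of reflected Brownian motion in the bounded ESIC domain $D$. Since $D$ satisfies the uniform exterior sphere and uniform interior cone conditions, the reflected Brownian motion on $D$ is strong Markov, with normalized Lebesgue measure $\pi(\cdot) = |\cdot|/|D|$ as its unique equilibrium probability measure (as already noted, following \citealp{BurdzyChen-1998}). Classical results for such processes give uniform convergence to equilibrium: for every $\delta>0$, there exists $u_0=u_0(\delta)<\infty$ such that
\[
 \sup_{x\in\ol D}\;\sup_{A\subset D \text{ Borel}} \bigl|\Prob{X(u_0)\in A\mid X(0)=x}-|A|/|D|\bigr| \quad<\quad \delta\,.
\]
This uniform mixing follows either from a Doeblin minorisation argument (the transition density $p_t(x,\cdot)$ is bounded below on $D$ by a strictly positive function after any fixed time $t_\star>0$, uniformly in $x\in\ol D$) or directly from the standard spectral gap for reflected Brownian motion in a bounded Lipschitz domain.

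Apply the display above with $A_0=\{z\in D:z_d<L/16\}$ and $\delta=1/16$. By assumption \eqref{eq:cuboid-size}, we have $|A_0|/|D|<1/8$, and hence for this choice of $u_0$, and uniformly over the initial point,
\[
 \Prob{X_d(u_0)<L/16}\quad<\quad \frac{1}{8}+\frac{1}{16}\quad=\quad\frac{3}{16}\,,
\]
and likewise $\Prob{Y_d(u_0)<L/16}<3/16$. Although $X$ and $Y$ are coupled, each has the marginal law of reflected Brownian motion started from its respective initial point, so the two preceding estimates are legitimate. A union bound then yields
\[
 \Prob{X_d(u_0)<L/16 \text{ or } Y_d(u_0)<L/16} \quad<\quad \frac{3}{8}\,,
\]
which is the complement of the event in \eqref{eqnsc2.1}. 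Hence the desired probability exceeds $5/8\geq 1/2$.

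The only delicate point is the uniform-in-starting-point ergodicity, and this is standard for reflected Brownian motion in a bounded ESIC domain; no properties of the coupling of $X$ and $Y$ are required, only the individual marginal distributions. I would cite \citet{BurdzyChen-1998} (and remark that a short direct argument via Doeblin's condition suffices, since $D$ is bounded and the transition kernel becomes uniformly mixing after a finite time).
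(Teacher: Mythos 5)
Your proof is correct and follows essentially the same route as the paper: the paper invokes a uniform-in-starting-point bound on the transition density at a large time $u_0$ (namely that it is at most $(1+\eps)/|D|\le 2/|D|$, citing \citealp[(2.2)]{BanBur}), then applies \eqref{eq:cuboid-size}, a union bound over the two marginals, and takes complements, exactly as you do. Your substitution of uniform total-variation convergence (via Doeblin/spectral-gap) for the density upper bound is an immaterial variation, and your observation that only the marginal laws of $X$ and $Y$ matter is precisely the point the paper also makes.
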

\begin{proof}
 As $t\rightarrow\infty$, the distributions of $X(t)$ and $Y(t)$ separately converge weakly to the equilibrium measure
on $\overline{D}$ of reflecting Brownian motion, which is normalized volume measure.
In fact (see \citealp[(2.2)]{BanBur}), for given $\eps \in (0,1]$, 
there exists \(u_0\) such that, for
all $x\in \overline{D}$ and any reflected Brownian motion $X$ on $D$ with $X(0)=x$,
the density of the distribution of $X$ at time $u_0$ is at most 
$(1 + \eps )/|D| \le 2/|D|$. 
The same remark applies to $Y$ and so, in view of \eqref{eq:cuboid-size},
\[
 \Prob{X_d(u_0)<L/16 \text{ or } Y_d(u_0)<L/16}
\leq 
2 (|D|/8) (2/|D|) = 1/2.
\]
The result follows by taking complements. 
\end{proof}

We now describe the pursuit strategies corresponding to each choice of coordinate \(i\leq d\)
by specifying continuous vector fields \(\chi^{(i)}(x,y)\) for the velocity of the Lion,
where \(x\) and \(y\) are the locations of the  Lion and of the Man. 
We allow the Man to choose any evasion strategy as long as his speed satisfies \(|y'(t)|\leq1\) for all \(t\).

We fix \(\delta\in(0,1)\), on which \(\chi^{(i)}(x,y)\) will depend implicitly;
in the proof of Theorem \ref{thm:rubber-band-not-shy}, we will let
$\delta \searrow 0$.
For \(i=1, 2,\ldots, d\), let \(\Pi_i\) be the orthogonal projection of \(\mathbb{R}^d\) onto 
the hyperplane defined by \(x_{i+1}=x_{i+2}=\ldots=x_d=0\). (\(\Pi_0\) is the trivial projection onto \(\{0\}\) and \(\Pi_d\) is the identity map.) 

We will define \(\chi^{(i)}(x,y)\) in three steps: first we will specify 
\(\Pi_{i-1}\chi^{(i)}(x,y)\) (equation \eqref{7.2'}),
then \((1-\Pi_i)\chi^{(i)}(x,y)\) (equation \eqref{eq:coord-i-plus-flat})
and finally 
\((\Pi_i-\Pi_{i-1})\chi^{(i)}(x,y)\) (equation \eqref{eq:coord-i-exact}).

Under the strategy given by \(\chi^{(i)}(x,y)\), we wish \(\Pi_{i-1}x\) to pursue \(\Pi_{i-1}y\) based on simple pursuit, but  
requiring \(\Pi_{i-1} x\) to move at speed at most \(\sqrt{1-\delta^2}\), 
and at a slower speed if \(x\) is close to \(y\) under the projection \(\Pi_{i-1}\).  Specifically, we set
\begin{equation}\label{7.2'}
 \Pi_{i-1}\chi^{(i)}(x,y) \quad=\quad 
 \min\left\{1,\frac{|\Pi_{i-1}(y-x)|}{\delta}\right\}\times \sqrt{1-\delta^2} \times \frac{\Pi_{i-1}(y-x)}{|\Pi_{i-1}(y-x)|}\,.
\end{equation}
Note that, as $\Pi_{i-1}(x-y) \rightarrow 0$, then $\Pi_{i-1}\chi^{(i)}(x,y) \rightarrow 0$.
Differentiating 
\(|\Pi_{i-1}(y-x)|\) with respect to $t$, it follows from (\ref{7.2'}) and the constraint \(|y'(t)|\leq1\) that,
when \(|\Pi_{i-1}(x-y)|\geq\delta\),
\begin{align}\label{jan10.3}
\left\langle
\Pi_{i-1}(y'-\chi^{(i)}(x,y)),
\frac{\Pi_{i-1}(y-x)}{|\Pi_{i-1}(y-x)|}
\right\rangle
\quad\leq\quad
1-\sqrt{1-\delta^2} 
\quad\leq\quad
\delta^2\,,
\end{align}
and therefore
the distance between \(\Pi_{i-1} x\) and \(\Pi_{i-1} y\) is either smaller than \(\delta\) or
increases only at rate at most \(\delta^2\).
(For  \(i=1\), we set \(\Pi_0=0\), in which case (\ref{jan10.3}) is vacuous.
Note that the bounds in (\ref{jan10.3}) do not depend on \((1-\Pi_i)\chi^{(i)}(x,y)\)
and 
\((\Pi_i-\Pi_{i-1})\chi^{(i)}(x,y)\),
which have not been defined yet.)

We set 
\begin{equation}\label{eq:coord-i-plus-flat}
 (1-\Pi_i)\chi^{(i)}(x,y) \quad=\quad 0\,,
\end{equation}
that is, the only nonzero components of $\chi^{(i)}$ are among its first \(i\) coordinates.


We still need to specify the $i$-th coordinate of $\chi^{(i)}$,  i.e., \(\chi^{(i)}_i(x,y)=(\Pi_i-\Pi_{i-1})\chi^{(i)}(x,y)\).  We define it so that it has the same 
sign as \(y_i-x_i=(\Pi_i-\Pi_{i-1})(y-x)\) and so that \(\chi^{(i)}(x,y)\) is a unit 
vector except when \(|y_i-x_i|\) is small.  Specifically,
\begin{equation}\label{eq:coord-i-exact}
 (\Pi_i-\Pi_{i-1})\chi^{(i)}(x,y) \;=\;
\min\left\{1,\frac{|y_i-x_i|}{\delta}\right\}\times\left(\sqrt{1-|\Pi_{i-1}\chi^{(i)}(x,y)|^2}\right) \times \operatorname{sgn}(y_i-x_i) \,.
\end{equation}
Because of (\ref{7.2'}), this implies that
\begin{align}
\label{7.3'}
|(\Pi_i-\Pi_{i-1})\chi^{(i)}(x,y)| \ge \delta \qquad \text{if } |y_i - x_i|\ge \delta.
\end{align}
Note that, as \(|x_i -y_i| \rightarrow 0\), then 
 $(\Pi_i-\Pi_{i-1})\chi^{(i)}(x,y)\rightarrow 0$.
On account of this and the observation after (\ref{7.2'}), it is not difficult to check 
that \(\chi^{(i)}(x,y)\) is continuous in \(x\) and \(y\). 

A crucial point in the strategy associated with
\(\chi^{(i)}(x,y)\), for given \(i<d\), is that it will force \(|x_i-y_i|\) 
to become small before \(y_d\) has the chance to decrease by more than a fixed amount
$4\gamma_i$ that is independent of the Man's strategy, where
\begin{equation}\label{eq:gamma-constant}
 \gamma_i \quad=\quad 1 \vee \frac{1}{\delta}|\Pi_{i-1}(x(0) - y(0))|. 
\end{equation} 

We will apply Lemma \ref{lem:iteration} in the probabilistic part of the argument, but the following explanation may help elucidate our inductive strategy. 
Heuristically speaking, at the $i$-th step, the lemma will be applied with the starting points $x(0)$ and $y(0)$ replaced by $x(u_{i-1})$ and $y(u_{i-1})$, 
and with the function $y(u_{i-1} + \,\cdot\,)$ in place of the function $y(\,\cdot\,)$.


\begin{lem}\label{lem:iteration}
 Choose $i\in \{2,\ldots,d-1\}$,  and assume $x(0)$ and $y(0)$  
lie in the long cuboid 
\(D_2 = (-1/2,1/2) \times \ldots \times (-1/2,1/2) \times (0, L)\), with $x(0)$ and $y(0)$
satisfying $x_d(0) > 0$, $y_d(0) > 4\gamma_i$.  
Assume that $x$ and $y$ move at unit speed or less, with the motion of $x$ being given by 
\(x'=\chi^{(i)}(x,y)\).  
There exists \(t<\infty\) at which \(|x_i(t)-y_i(t)| \le \delta\); denote by \(u_i\) the first time \(t\) at which this condition is satisfied.
Whatever the motion of \(y\), one has \(u_i\leq 1/\delta \).  Moreover,
%
\begin{align}
 |(\Pi_i-\Pi_{i-1})(x(u_i)-y(u_i))| \quad&=\quad |x_i(u_i)-y_i(u_i)| \quad\le\quad \delta\,, \label{eqn:iteration-1}\\
 (1-\Pi_i)x(t) \quad&=\quad (1-\Pi_i)x(0) \quad \text{for } t \leq u_i\,, \label{eqn:iteration-2}\\
 |\Pi_{i-1}(x(u_i)-y(u_i))| \quad&\leq\quad |\Pi_{i-1}(x(0)-y(0))| + 2\delta\,, \label{eqn:iteration-3}\\
 y_d(t) \quad&\geq\quad y_d(0) - 4 \gamma_i\quad \text{for } t \leq u_i \,. \label{eqn:iteration-4}
\end{align}
\end{lem}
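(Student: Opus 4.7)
My plan is to establish the four conclusions in turn, exploiting the explicit form of the vector field $\chi^{(i)}$ given by \eqref{7.2'}--\eqref{eq:coord-i-exact} together with the estimates \eqref{jan10.3} and \eqref{7.3'}.

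Conclusion \eqref{eqn:iteration-2} is immediate from \eqref{eq:coord-i-plus-flat}: since $(1-\Pi_i)\chi^{(i)} \equiv 0$, the coordinates $x_{i+1}(t), \ldots, x_d(t)$ remain at their initial values. In particular $x_d(t)\equiv x_d(0)>0$, so $x(t)\in D_2$ and hence $x_i(t)\in(-1/2,1/2)$ throughout.

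For the existence of $u_i$, the bound $u_i\leq 1/\delta$, and \eqref{eqn:iteration-1}, I would argue by contradiction. Suppose $|y_i(t)-x_i(t)|>\delta$ for all $t\in[0,T]$ with $T>1/\delta$. By continuity, $y_i-x_i$ cannot cross zero on this interval, so it has constant sign, say positive. Then by \eqref{eq:coord-i-exact} together with \eqref{7.3'}, $x_i'(t)=\chi^{(i)}_i(x(t),y(t))\geq \delta$ throughout $[0,T]$, so $x_i(T)-x_i(0)\geq \delta T>1$, contradicting the bound $x_i(t)\in(-1/2,1/2)$. Therefore $u_i\leq 1/\delta$, and \eqref{eqn:iteration-1} follows by continuity of $y_i-x_i$ at the stopping time $u_i$.

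For \eqref{eqn:iteration-3}, set $r(t)=|\Pi_{i-1}(y(t)-x(t))|$. The estimate \eqref{jan10.3} shows that $r'(t)\leq \delta^2$ whenever $r(t)\geq\delta$, so any excursion of $r$ above $\delta$ accumulates at most $\delta^2 u_i\leq \delta$ of additional growth over $[0,u_i]$. A brief case analysis depending on whether $r(0)\geq\delta$ or $r(0)<\delta$ then yields $r(u_i)\leq\max(r(0),\delta)+\delta\leq r(0)+2\delta$.

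The final conclusion \eqref{eqn:iteration-4} is the most delicate. The naive estimate $|y_d(t)-y_d(0)|\leq t\leq u_i\leq 1/\delta$ already suffices whenever $4\gamma_i\geq 1/\delta$, i.e.\ whenever $|\Pi_{i-1}(x(0)-y(0))|\geq 1/4$. In the remaining regime where $\gamma_i=1$, I plan a tradeoff argument based on the speed constraint $|y'|\leq 1$: to delay $u_i$ appreciably beyond an $O(1)$ time, the Man would need to sustain $r(t)\geq\delta$ (so that Lion's $\chi_i$ stays at its minimum value $\delta$); but preventing $r$ from dropping below $\delta$ requires $|\Pi_{i-1}(y'(t))|\geq\sqrt{1-\delta^2}$ by the dynamics of $r$, and the speed constraint then forces $|y_d'(t)|\leq\delta$ on such intervals, contributing at most $\delta u_i\leq 1$ to $\int_0^{u_i}|y_d'(s)|\,\d s$. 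The complementary intervals, where $r$ remains below $\delta$, contribute only $O(1)$ because $\chi_i$ is then close to $1$ and the gap $\eta=y_i-x_i$ closes at a uniformly positive rate. The main obstacle will be assembling these two estimates cleanly enough to accommodate arbitrary time-varying Man strategies, but together they yield $|y_d(t)-y_d(0)|\leq 4=4\gamma_i$ as required.
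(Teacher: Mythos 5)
Your handling of \eqref{eqn:iteration-1}--\eqref{eqn:iteration-3} and of the bound \(u_i\leq 1/\delta\) is correct and essentially identical to the paper's (the paper works with \(u_i^*=u_i\wedge(1/\delta)\) to break the mild circularity that \(x\) and \(y\) are only known to stay in the cuboid while \eqref{eqn:iteration-4} holds; you should do the same). The genuine gap is in \eqref{eqn:iteration-4}, which is the heart of the lemma. First, your case split is incomplete: the complement of \(\{4\gamma_i\geq 1/\delta\}\), i.e.\ of \(\{|\Pi_{i-1}(x(0)-y(0))|\geq 1/4\}\), is not ``\(\gamma_i=1\)'' but the whole range \(1\leq\gamma_i<1/(4\delta)\). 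The intermediate regime \(\delta<|\Pi_{i-1}(x(0)-y(0))|<1/4\) is left uncovered, and there a bound of the form \(|y_d(t)-y_d(0)|\leq 4\) is actually \emph{false}: the Man can retreat radially in the \(\Pi_{i-1}\) coordinates at speed \(1-O(\delta\gamma_i)\), so that \(r=|\Pi_{i-1}(x-y)|\) decays from \(r(0)=\delta\gamma_i\) to \(\delta\) only over a time of order \(1/\delta\), while spending the remaining speed budget, of order \(\sqrt{\delta\gamma_i}\) per unit time, on decreasing \(y_d\); this forces a drop of order \(\sqrt{\gamma_i}\). So the \(\gamma_i\)-dependence of the conclusion is essential and any correct proof must produce it, which your two estimates (each \(O(1)\)) cannot.

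Second, the two pointwise claims underlying your tradeoff are not correct as stated. Keeping \(r\geq\delta\) does not require \(|\Pi_{i-1}y'|\geq\sqrt{1-\delta^2}\) pointwise --- \(r\) is allowed to decrease, and the Man's total ``savings'' are proportional to the net decrease of \(r\), which is exactly the quantity that must be tracked. And on intervals where \(r<\delta\), the gap \(y_i-x_i\) need not close at a uniformly positive rate, since \(|\chi^{(i)}_i|\) is only slightly above \(\delta\) when \(r\) is just below \(\delta\) and the Man may spend his speed on \(y_i\). The paper resolves both issues at once by partitioning \([0,u_i^*]\) into the level sets \(B_j=\{t:r(t)\in(a_{j-1},a_j)\}\), proving the telescoping bound \(\int_{B_j}(|\Pi_{i-1}x'|-|\Pi_{i-1}y'|)\,dt\leq a_j-a_{j-1}\) (the excess \(\Pi_{i-1}\)-speed of the Lion over the Man is controlled by the net change of \(r\) on each level band), converting this via \(x_i'^2=1-\sum_{k<i}x_k'^2\) and \(y_d'^2\leq 1-\sum_{k<i}y_k'^2\) into \(\int_{B_j}(\tfrac12 y_d'^2-x_i'^2)\,dt\leq a_j-a_{j-1}\), and then applying Cauchy--Schwarz together with \(\int_0^{u_i^*}|x_i'|\,dt\leq 1\) and \(a_{J-1}\leq 2\delta\gamma_i\) to conclude \(\int_0^{u_i^*}|y_d'|\,dt\leq 2+2\sqrt{\gamma_i}\leq 4\gamma_i\). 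Some bookkeeping of this kind, uniform over arbitrary Man strategies and producing the \(\gamma_i\)-dependent bound, is what your sketch is missing.
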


Note that, in the case where $i=1$,  (\ref{eqn:iteration-3}) is vacuous and the other formulas
hold trivially with $u_1 \le 1$, since
the width of the first component of the cuboid is $1$
and $|\chi_1^{(1)}(x(t),y(t))| = 1$ for $t\le u_1$. 
%
 
\begin{proof}[Proof of Lemma \ref{lem:iteration}]
The formulas (\ref{eqn:iteration-1})--(\ref{eqn:iteration-4}) hold trivially, with $u_i=0$, when 
$|y_i(0) - x_i(0)| \le \delta$.  So, we will assume that $|y_i(0) - x_i(0)| > \delta$, with $u_i$ being the
time $t$ at which  $|y_i (t) - x_i (t)| = \delta$ first occurs.

Assume for the moment that (\ref{eqn:iteration-4}) holds, but with the weaker $t\le u_i^*$ in place of
$t\le u_i$, where $u_i^* = u_i \wedge (1/\delta)$.  Then, $x$ and $y$ both remain in the long cuboid
until time $u_i^*$.

 By (\ref{7.3'}), the speed of the component
$x_i$ is at least $\delta$, up until time $u_i^*$.
  Since the width of the $i$th component of the cuboid is $1$, it follows that
$u_i = u_i^* \le 1/\delta$.
Also, inequality \eqref{eqn:iteration-1} follows immediately from the definition of \(u_i\).

 Equation \eqref{eqn:iteration-2} follows from \((1-\Pi_i)\chi^{(i)}(x,y)= 0\).

Let $t_*$ be the supremum of $t\leq u_i$ such that
$|\Pi_{i-1}(x(t)-y(t))| \le \delta$; we let $t_*=0$ if there is no such $t$.
 Inequality \eqref{eqn:iteration-3} follows from the upper bound $\delta^2$ 
  in \eqref{jan10.3} on the directional
derivative of  \(|\Pi_{i-1}(x-y)|\) on the interval $[t_*, u_i]$,  
and from the bound \(u_i\leq 1/\delta\).

In order to complete the proof, it remains to demonstrate  (\ref{eqn:iteration-4}), 
 with $t\le u_i^*$ in place of $t\le u_i$.   The argument strongly uses
the definition of $\chi^{(i)}(x,y)$, which will ensure that the pursuit by the Lion of the Man is 
``efficient" with respect to the allowed change of the $d$th coordinate of the Man.   
The argument requires some  estimation since 
$\chi^{(i)}(x,y)$ is constructed in terms of the Euclidean metric, whereas we will need bounds with
respect to the L$^1$ metric in order to obtain  (\ref{eqn:iteration-4}).  
  

We 
choose $0=a_0<a_1<\ldots <a_J=1$ and let
$A_j = (a_{j-1}, a_j)$ such that, for any points
$x$, $\tilde{x}$, $y$, $\tilde{y}$ with 
$|\Pi_{i-1} (x- y)|, |\Pi_{i-1} (\tilde{x}- \tilde{y})|\in A_j$,
for given $j$,   
 \begin{equation}
 \label{eqnsc3.1'}
 |\chi_i^{(i)}(x, y)| - |\chi_i^{(i)}(\tilde{x}, \tilde{y})| \le \delta / 4.
 \end{equation}
Note that, on $t\le u_i^*$, $\chi_i^{(i)}(x(t),y(t))$ depends only on $|\Pi_{i-1}(x(t)-y(t))| \wedge \delta$.
 Let $B_j \subseteq [0,u_i^*]$ denote the time set on which $|\Pi_{i-1} (x(t)-y(t))| \in A_j$.  One can choose $a_j$ 
 so that the set where $|\Pi_{i-1}(x(t)-y(t))| = a_j$ has measure $0$ and so that 
$ a_{J-1} \in [\delta\gamma_i, 2\delta\gamma_i]$.
%
((\ref{eqnsc3.1'}) is satisfied on $[a_{J-1},a_J]$ since $a_{J-1} \ge \delta$, and so 
$\chi_i^{(i)}(x,y)$ is constant there.) 
We claim that
 \begin{equation}
 \label{eqnsc3.2}
 \begin{split}
 \int_{B_j} \left(\sqrt{\sum_{k=1}^{i-1}x_k^{\prime}(t)^2} - \sqrt{\sum_{k=1}^{i-1}y_k^{\prime}(t)^2}\right)\,dt
 &\le a_j - a_{j-1} \quad \text{for } j\le J-1, \\
 &\le 0 \quad \text{for } j=J,
 \end{split}
 \end{equation} 
 which we demonstrate at the end of the proof.

Employing the definition of $\chi_i^{(i)}$ and $|y^{\prime}(t)| \le 1$, we have
$$ x_i^{\prime}(t)^2 = 1 - \sum_{k=1}^{i-1} x_k^{\prime}(t)^2 \quad \text{and} \quad
y_d^{\prime}(t)^2 \le 1 - \sum_{k=1}^{i-1} y_k^{\prime}(t)^2 $$
for $t\le u_i^*$.
 On account of $1-v \le \sqrt{1-v} \le 1-v/2$ for $v\in [0,1]$, it follows from this and 
(\ref{eqnsc3.2})  that
 \begin{equation}
 \label{eqnsc3.3}
 \begin{split}
 \int_{B_j}\left(\frac{1}{2}y_d^{\prime}(t)^2 - x_i^{\prime}(t)^2\right)\,dt &\le a_j - a_{j-1} \quad \text{for } j\le J-1, \\
 &\le 0 \quad \text{for } j=J.
 \end{split}
 \end{equation}
Because of  (\ref{7.3'}) and (\ref{eqnsc3.1'}), for $t_1,t_2 \in B_j$, 
 \begin{equation}
\label{eqnsc3.3'}
\frac{1}{2} \quad\leq\quad
 \left(\frac{|x_i^{\prime}(t_1)|}{|x_i^{\prime}(t_2)|}\right)^2  
\quad\leq\quad 2\,. 
 \end{equation}
 Isolating the term $y_d^{\prime}(t)^2$ in (\ref{eqnsc3.3}), applying the Cauchy-Schwarz inequality to its integral, 
 applying (\ref{eqnsc3.3'}) and the inequality $\sqrt{v+w} \le \sqrt{v} + \sqrt{w}$ to the other side, and summing over
 $j=1,\ldots,J$ yields
 \begin{equation}
 \label{eqnsc3.4}
 \int_0^{u_i^*}|y_d^{\prime}(t)|\,dt \le 2\sum_{j=1}^J |B_j|\min_{t\in B_j}|x_i^{\prime}(t)| + 
 \sum_{j=1}^{J-1}\sqrt{2(a_j-a_{j-1})|B_j|}\,.  
 \end{equation}
Since $x_i^{\prime}(t)$ retains the same sign on $t\le u_i^*$,
 the first sum on the right side of (\ref{eqnsc3.4}) is at most $2\int_0^{u_i^*}|x_i^{\prime}(t)|\,dt \le 2$. 
Because $u_i^* \le 1/\delta$ and 
$a_{J-1} \le 2\delta\gamma_i$,
it follows from the Cauchy-Schwarz inequality that
the second term on the right is at most 
 \begin{equation*}
\sqrt{2u_i^*\sum_{j=1}^{J-1}(a_j-a_{j-1})} \le 2\sqrt{\gamma_i} \le 2\gamma_i.
 \end{equation*}
 Hence, $y_d(t) - y_d(0) \ge -2 - 2\gamma_i \ge -4\gamma_i$ for $t\le u_2^*$, as desired.

 We still need to demonstrate (\ref{eqnsc3.2}).  
First, note that since each $A_j$ is open, so is each $B_j$.
Let $B_j^{\eta}$ denote the subset of $B_j$ consisting of 
 the union of all open intervals in $B_j$ with length at least $\eta$, with $\eta \in (0, (a_j-a_{j-1})/2]$.    
 In order to show (\ref{eqnsc3.2}), it is sufficient to show its analog 
 \begin{equation}
 \label{eqnsc4.1}
 \begin{split}
 \int_{B_j^{\eta}} \left(\sqrt{\sum_{k=1}^{i-1}x_k^{\prime}(t)^2} - \sqrt{\sum_{k=1}^{i-1}y_k^{\prime}(t)^2}\right)\,dt
 &\le a_j - a_{j-1} \quad \text{for } j\le J-1, \\
 &\le 0 \quad \text{for } j=J,
 \end{split}
 \end{equation} 
 for each such $\eta$, because the integrands 
 are bounded. 

 We can assume that $B_j^{\eta} \neq \emptyset$ in (\ref{eqnsc4.1}).  We decompose $B_j^{\eta}$ into disjoint intervals
 $(b_{\ell},c_{\ell})$, $\ell = 1,\ldots,L$, with $b_{\ell}$ and $c_{\ell}$ increasing in $\ell$.  
It follows from the definition of $\Pi_{i-1}\chi^{(i)}(x,y)$ 
and differentiation of \(|\Pi_{i-1}(y(t)-x(t)|\)
that, for any $\ell\le L$, 
 \begin{equation}
 \label{eqnsc4.2}
 \int_{b_{\ell}}^{c_{\ell}} \left(\sqrt{\sum_{k=1}^{i-1}x_k^{\prime}(t)^2} - 
 \sqrt{\sum_{k=1}^{i-1}y_k^{\prime}(t)^2}\right)\,dt \le |\Pi_{i-1}(x(b_{\ell})-y(b_{\ell}))| - 
 |\Pi_{i-1}(x(c_{\ell})-y(c_{\ell}))|.
 \end{equation}
 We claim that 
 \begin{equation*}
 |\Pi_{i-1}(x(b_{\ell + 1})-y(b_{\ell + 1}))| = |\Pi_{i-1}(x(c_{\ell})-y(c_{\ell}))|,
 \end{equation*} 
 with  both equalling either $a_{j-1}$ or $a_j$: 
 these are endpoints of $A_j$, and the length of
any time interval during which
the distance between $\Pi_{i-1}x(t)$ and $\Pi_{i-1}y(t)$ crosses 
 $A_j$ must be at least $|A_j|/2 = (a_j-a_{j-1})/2$.  Hence, such an interval is included in
 $B_j^{\eta}$, because $\eta \le (a_j - a_{j-1})/2$.  This would contradict the definitions of
$b_{\ell +1}$ and $c_{\ell}$ if the projected distances between $x$ and $y$ were different at these two times.  
%
 
Summing $\ell$ over $1,\ldots,L$ in (\ref{eqnsc4.2}), the terms from the right side therefore telescope, and
 so the left side of (\ref{eqnsc4.1}) is at most
 \begin{equation}
\label{eqnsc4.3}
 |\Pi_{i-1}(x(b_{1})-y(b_{1}))| - |\Pi_{i-1}(x(c_L)-y(c_L))|.
 \end{equation}
Since the difference in (\ref{eqnsc4.3}) is dominated by $a_j-a_{j-1}$, 
the first line on the right side of (\ref{eqnsc4.1}) follows immediately. The second line 
of  (\ref{eqnsc4.1}) follows by noting that 
 $|\Pi_{i-1}(x(b_1)-y(b_1))| = a_{J-1}$, for $j=J$, 
since $|\Pi_{i-1}(x(0)-y(0))|\le a_{J-1}$ (by 
the definition of $a_{J-1}$),
and therefore the second term in (\ref{eqnsc4.3}) is at least as large as the first.  
 This completes the proof of the lemma.
\end{proof}

We note that the times \(u_i\), $i=1,\ldots, d-1$, in Lemma \ref{lem:iteration}, 
depend on the trajectory \(y\) taken by the Man.  
Since $u_i$ can be up to order $1/\delta$, $u_i$ might be larger than the length $L$ of 
the cuboid when $\delta$ is chosen close to $0$.   Although this could conceivably allow the
Man to escape from the cuboid before being approached by the Lion,
the bound on $y_d(t) - y_d(0)$ in  (\ref{eqn:iteration-4}) will allow us to show this will not occur.

We also obtain bounds for the case \(i=d\); these bounds are much easier to derive than the
corresponding bounds in Lemma \ref{lem:iteration}.
\begin{lem}\label{lem:iteration-d}
Assume
 \(x(0)\) and \(y(0)\) lie in the long cuboid 
\(D_2 = (-1/2,1/2) \times \ldots \times (-1/2,1/2) \times (0, L)\),
with $x(0)$ and $y(0)$ satisfying \(0<x_d(0)\leq y_d(0)\).
 Assume that \(x\) and \(y\) move at unit speed or less,
with the motion of \(x\) being 
given by \(x'=\chi^{(d)}(x,y)\).  
There exists \(t<\infty\) at which \(|x_d(t)-y_d(t)| \le \delta\); denote by \(u_d\) the first time \(t\) at which this condition is satisfied.
Then, whatever the motion of \(y\), one has
\(u_d\leq L/\delta\).  Moreover,
\begin{equation}
 |\Pi_{d-1}(x(u_d)-y(u_d))| \quad\leq\quad |\Pi_{d-1}(x(0)-y(0))| + (L+1) \delta\, \label{eqn:iteration-1-d}
\end{equation}
and 
\begin{equation}
\label{eqnsc5.4'}
x_d(0)\le x_d(t)\leq y_d(t) \quad  \text{for }   t\leq u_d.
\end{equation}
\end{lem}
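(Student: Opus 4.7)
The plan is to adapt the argument of Lemma \ref{lem:iteration}, noting that the $i=d$ case is much cleaner because there is no ``horizontal residual'' coordinate to control along the way, and the time bound falls out almost directly once $x_d$ is shown to be monotone and trapped below $y_d \le L$.

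First I would handle the time bound together with (\ref{eqnsc5.4'}). If $y_d(0) - x_d(0) \le \delta$, everything is trivial with $u_d = 0$, so suppose $y_d(0) - x_d(0) > \delta$. Then for $t < u_d$, continuity forces $y_d(t) - x_d(t) > \delta$, so by (\ref{7.3'}) and the definition (\ref{eq:coord-i-exact}) we have $x'_d(t) \ge \delta > 0$ with $\operatorname{sgn}(y_d - x_d) = +1$, making $x_d$ strictly increasing on $[0, u_d]$. This gives $x_d(t) \ge x_d(0)$, and by the intermediate value theorem $y_d - x_d$ cannot cross $0$ without first passing through $\delta$ (which would have triggered $u_d$ earlier), so $x_d(t) \le y_d(t)$ throughout $[0, u_d]$. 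Since $y \in \overline{D}$ and $y_d(t) > x_d(t) \ge x_d(0) > 0$, the point $y(t)$ lies in $\overline{D_2}$, and hence $y_d(t) \le L$. Integrating $x'_d \ge \delta$ and using $x_d(u_d) \le y_d(u_d) \le L$ then yields $\delta\, u_d \le L$, so $u_d \le L/\delta$ and in particular $u_d$ is finite.

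Next I would prove (\ref{eqn:iteration-1-d}) by reusing (\ref{jan10.3}) exactly as in Lemma \ref{lem:iteration}: whenever $|\Pi_{d-1}(x(t)-y(t))| \ge \delta$, this quantity grows at rate at most $\delta^2$. Let $t_*$ be the supremum of those $t \in [0, u_d]$ for which $|\Pi_{d-1}(x(t)-y(t))| \le \delta$, with $t_* = 0$ if the set is empty. In either case, integrating the bound $\delta^2$ over $[t_*, u_d]$ contributes at most $\delta^2 u_d \le L\delta$. When $t_* > 0$ this gives $|\Pi_{d-1}(x(u_d)-y(u_d))| \le \delta + L\delta = (L+1)\delta$, and when $t_* = 0$ it gives $|\Pi_{d-1}(x(u_d)-y(u_d))| \le |\Pi_{d-1}(x(0)-y(0))| + L\delta$; both are bounded by $|\Pi_{d-1}(x(0)-y(0))| + (L+1)\delta$.

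No step presents a serious obstacle; the only point needing care is the implicit use of $y \in \overline{D}$ to deduce $y_d(t) \le L$ on $[0, u_d]$. Without this ambient constraint, catch-up in the $d$-th coordinate could fail, since the relative velocity $y'_d - x'_d$ can be as large as $1 - \delta > 0$. Once $y_d \le L$ is in hand, the argument altogether avoids the delicate L$^1$/L$^2$ bookkeeping of Lemma \ref{lem:iteration}, because here there is no separate coordinate of $y$ whose decrease needs to be controlled.
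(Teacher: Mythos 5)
Your proposal is correct and follows essentially the same route as the paper: the time bound comes from $x_d'\ge\delta$ with the correct sign while $x_d$ is confined below $y_d\le L$, and \eqref{eqn:iteration-1-d} comes from integrating the $\delta^2$ bound of \eqref{jan10.3} over $[t_*,u_d]$ with $u_d\le L/\delta$. Your explicit justification that $y$ remains in $\overline{D_2}$ (hence $y_d\le L$) is a small point the paper leaves implicit, but otherwise the arguments coincide.
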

\begin{proof}
The inequality (\ref{eqnsc5.4'}) follows immediately from $x_d(0) \le y_d(0)$ and the definition of $u_d$.
The inequality (\ref{eqn:iteration-1-d})  holds trivially when 
$x_d(0) \ge y_d(0) -\delta$, so we will assume that $x_d(0) < y_d(0) -\delta$, with
$u_d$ being the time  at which  $x_d(t) = y_d(t) -\delta$ first occurs.

Since \(\operatorname{sgn}(x_d')> 0\) over the time interval $[0,u_d)$, 
one has \(0< x_d<y_d-\delta\) there, and
%
\(x_d\) can travel no further than \(L-\delta  < L\) up until time \(u_d\). 
Also, by (\ref{7.3'}), the speed of the component $x_d$ is at least $\delta$ up until time $u_d$.   
%
Consequently, \(u_d\leq L/\delta\), as required.

Let $t_*$ be the supremum of $t\leq u_d$ such that
$|\Pi_{d-1}(x(t)-y(t))| \le \delta$; we let $t_*=0$ if there is no such $t$.
Inequality (\ref{eqn:iteration-1-d}) follows from the upper bound $\delta^2$ in (\ref{jan10.3})
on the directional derivative of $|\Pi_{d-1}(x-y)|$
on the interval $[t_*, u_d]$ , and on the bound $u_i \le L/\delta$.

\end{proof}
As before, \(u_d\) depends on the trajectory \(y\) taken by the Man.

We now outline the proof of Theorem \ref{thm:rubber-band-not-shy}.  The reasoning is similar
to that employed in the proofs of Proposition \ref{o26.4} and Theorem \ref{thmo24} 
in the previous section, 
where we employed the Lion and the Man problem to
demonstrate the absence of shy couplings for Brownian motion;
here, we will employ Lemmas \ref{lem:equilibrium}, 
\ref{lem:iteration} and \ref{lem:iteration-d} instead of Theorem \ref{thmo13}.
In the present setting, after employing Lemma \ref{lem:equilibrium},
we must piece 
together analogous results over $d$ time intervals,
and the roles of the Lion and the Man for the
two Brownian motions may need to be interchanged at the beginning of the last interval. 

\begin{proof}[Proof of Theorem \ref{thm:rubber-band-not-shy}]
Consider a pair of co-adapted reflecting Brownian motions on $\overline{D}$.  
By Lemma \ref{lem:equilibrium}, there is a nonrandom time $u_0$ such that, for any pair of
initial states $X(0)$ and $Y(0)$, 
\begin{equation}
\label{eqnsc6.1}
\Prob{X_d(u_0) \ge L/16 \text{ and } Y_d(u_0) \ge L/16}
\geq 1/2.
\end{equation}
Restarting the process at time $u_0$, we will apply (\ref{eqnsc6.1}), and Lemmas \ref{lem:iteration} and \ref{lem:iteration-d}
to deduce that, for any given
$\varepsilon \in (0,1)$,
\begin{equation}
\label{eqnsc6.2}
\Prob{\inf_{0\le s\le t}\text{dist}_{\bf I}(X(s),Y(s))\le \varepsilon} \quad>\quad 0
\end{equation}
for some $t$ not depending on $X(0)$ and $Y(0)$, where $\text{dist}_{\bf I}$ is the intrinsic distance metric on $\overline{D}$.
This is the analog of (\ref{o26.4d}).  
It is not hard to modify the argument in the proof of
\citet[Proposition 20]{BramsonBurdzyKendall-2011}
to show that \eqref{eqnsc6.2} implies
the uniform bound 
\begin{equation}
\label{eqnsc6.3}
\Prob{\inf_{0\le s\le t_{1}}\text{dist}_{\bf I} (X(s),Y(s))\le \varepsilon} \quad\ge\quad p_{1}\,,
\end{equation}
for some $t_{1}$ and $p_{1} > 0$ not depending on $X(0)$ and $Y(0)$. 
The uniform bound in (\ref{eqnsc6.3}) 
permits us to iterate the inequality \eqref{eqnsc6.3}  repeatedly, 
from which it follows that the coupling cannot be shy.

We now provide details for the derivation of (\ref{eqnsc6.2}).  
Consider an arbitrarily small $\delta\in(0,1)$. Assume that 
$X^n(0) = x(0)$, $Y^n(0)  = y(0)$ and
 $x_d(0),y_d(0) \ge L/16 $.
For specific stopping times \(U^i\), $i=0,\ldots,d-1$, to be defined below, with $U^0=0$ and 
\(U^i-U^{i-1}\in [0, 1/\delta]\), 
we let
\begin{equation*}
A_i = \Bigg\{|\Pi_i (X(U^{i}) - Y(U^{i}))|\le 4\delta i , \quad 
\inf_{U^{i-1} \leq t \leq U^i} X_d(t) \ge L/16 - i, \quad
\inf_{U^{i-1} \leq t \leq U^i} Y_d(t)\ge L/16 - 16i^2 \Bigg\}\,.
\end{equation*}
Note that it immediately follows from the first and third inequalities, and (\ref{kbj1.1}) that
\begin{equation}
\label{eqnsc6.3new}
Y_d(U^{i}) > 4\gamma_{i+1}^{\prime},
\end{equation}
where
$ \gamma_{i}^{\prime}= 1 \vee \frac{1}{\delta}|\Pi_{i-1}(X(U^{i-1}) - Y(U^{i-1}))|$.
We will show by induction that
\begin{align}
\label{eqnsc7.1n}
&\Prob{A_1} \quad>\quad0\,, \\
\label{eqnsc7.1}
&\Prob{A_i \;\Big|\; \bigcap_{k=1}^{i-1} A_{k}} \quad>\quad0\,, \qquad i=2, \dots, d-1\,.
\end{align}

We start with the case $i=1$, and 
define $(X^n(t),Y^n(t))$ and 
$(\tilde{X}^n(t),\tilde{Y}^n(t))$ as in (\ref{o26.20})-(\ref{o26.21}) and (\ref{o26.22})-(\ref{o26.23}), 
with 
$X^n(0) = x(0)$ and $Y^n(0)  = y(0)$,
and with 
$\chi$ replaced by \(\chi^{(1)}\) as defined before  Lemma \ref{lem:iteration}.  
The same reasoning as in the proof of Proposition \ref{o26.4}, but using Lemma
\ref{lem:iteration} instead of Theorem \ref{thmo13}, can be applied to analyze the limiting behavior of
$(\tilde{X}^n(t),\tilde{Y}^n(t))$ as $n\rightarrow\infty$.  The stopping time $T^n$ defined
below (\ref{o26.21}) is replaced by
the time at which either $X^n$ or $Y^n$ leaves $\overline{D}_2$. 
(We note that this means we can work throughout this proof with Euclidean
distance 
rather than intrinsic distance \(\text{dist}_{\bf I}\), 
since the two agree for pairs of points chosen within
the convex set $\overline{D}_2$.)
As in the proof of Proposition \ref{o26.4}, there exists 
a stopping time $\wt T^* \leq 1/\delta$ and
processes 
$\{\wt X^\infty(t), t\in [0,  1/\delta]\}$
and 
$\{\wt Y^\infty(t), t\in [0, 1/\delta]\}$, with
$\wt X^\infty(0)  = x(0)$, $\wt Y^\infty(0)=y(0) $, 
$\wt X^\infty(t)  = \wt Y^\infty(t) $  
for $ t \in [ \wt T^*, 1/\delta]$, $|\frac\prt {\prt t}
\wt Y^\infty(t) | \leq 1$ for $0 \leq t \leq 1/\delta$,
and
\begin{align*}
 \wt X^\infty(t) &= \int_0^t \chi^{(1)}(\wt X^\infty(s),\wt Y^\infty(s))\d s \qquad \text{  for  } t < 
\wt T^*\,,
\end{align*}
such that $(\wt X^n,\wt Y^n)$ converges a.s. to $(\wt X^\infty,\wt Y^\infty)$ 
uniformly on $[0,1/\delta]$.

Note that $\gamma_1^{\prime} = 1$; together with
$y_d(0) \geq L/16$ and \eqref{kbj1.1}, this implies $y_d(0) > 4 \gamma_1^{\prime}$, and so all of the 
conditions of Lemma \ref{lem:iteration} are satisfied. Applying the lemma, 
with $\wt X^\infty$ and $\wt Y^\infty$ in place of $x$ and $y$,
and denoting by $\wt U^1$ the first time $s$ at which
$|\wt X_1^\infty(s)-\wt Y_1^\infty(s)|\leq \delta$, it follows that $\wt U^1 \leq 1/\delta$. 
Moreover,
$|\Pi_{1}(\wt X^\infty(\wt U^1)-\wt Y^\infty(\wt U^1))|\leq \delta$ by  \eqref{eqn:iteration-1}. Since 
$\wt X^\infty(0)  = x(0)$ and  $x_d(0) \ge L/16 $, it follows from \eqref{eqn:iteration-2} that 
$\inf_{0 \leq t \leq \wt U^1}\wt X_d^\infty(t)  \ge L/16 $; it also follows from
\eqref{eqn:iteration-4} that $\inf_{0 \leq t \leq \wt U^1}\wt Y_d^\infty(t)  \ge L/16 -4 $.
These observations and the fact that 
$(\wt X^n,\wt Y^n)$ converges a.s. to $(\wt X^\infty,\wt Y^\infty)$ 
uniformly on $[0,1/\delta]$ imply that, for large enough \(n\),
\begin{equation*}
\P\Bigg[|\Pi_1 (\wt X^n(\wt U^{1}) - \wt Y^n(\wt U^{1}))|\le 4\delta  , \quad 
\inf_{0 \leq t \leq \wt U^1} \wt X^n_d(t) \ge L/16 -1,
\quad \inf_{0 \leq t \leq \wt U^1}\wt Y^n_d(t)\ge L/16 - 16 
\Bigg] > 0.
\nonumber
\end{equation*}
By the same argument as in \eqref{o28.2}, it follows that,
for some stopping time $U^1 \leq \wt U^1 \leq 1/\delta$,
\begin{equation*}
\P\Bigg[|\Pi_1 ( X( U^{1}) -  Y( U^{1}))|\le 4\delta  , \quad 
\inf_{0 \leq t \leq  U^1}X_d( t) \ge L/16 - 1, 
\quad \inf_{0 \leq t \leq  U^1} Y_d( t)\ge L/16 - 16 
\Bigg] > 0.
\nonumber
\end{equation*}
This completes the proof of \eqref{eqnsc7.1n}.

We will next present the induction step. Suppose that 
\eqref{eqnsc7.1n} and \eqref{eqnsc7.1} hold for $1$,  $2$, \dots, $i-1$.
We define $(X^n(t),Y^n(t))$ and 
$(\tilde{X}^n(t),\tilde{Y}^n(t))$ as in (\ref{o26.20})-(\ref{o26.21}) and (\ref{o26.22})-(\ref{o26.23}),
relative to the processes $\{X(U^{i-1} + \, \cdot\,)\}$ and
 $\{Y(U^{i-1} + \, \cdot\,)\}$ in place of $\{X(\, \cdot\,)\}$ and $\{Y(\, \cdot\,)\}$ (using \(\chi^{(i)}\) instead of $\chi$).
To simplify our presentation, we do not indicate in our notation that $X^n(t)$ and $Y^n(t)$ depend on $i$; the same remark applies to other processes and random variables used in the induction step.
Note that
$X^n(0) = X(U^{i-1})$ and $Y^n(0)  = Y(U^{i-1})$. 
Just as in the first step, we can find 
a stopping time $\wt T^* \leq 1/\delta$ and
processes 
$\{\wt X^\infty(t), t\in [0,  1/\delta]\}$
and 
$\{\wt Y^\infty(t), t\in [0, 1/\delta]\}$, with
$\wt X^\infty(0)  = X(U^{i-1})$, $\wt Y^\infty(0)=Y(U^{i-1}) $, 
$\wt X^\infty(t)  = \wt Y^\infty(t) $  
for $ t \in [ \wt T^*, 1/\delta]$, $|\frac\prt {\prt t}
\wt Y^\infty(t) | \leq 1$ for $0 \leq t \leq 1/\delta$,
and
\begin{align*}
 \wt X^\infty(t) &= \int_0^t \chi^{(1)}(\wt X^\infty(s),\wt Y^\infty(s))\d s \qquad \text{  for  } t < 
\wt T^*\,,
\end{align*}
such that $(\wt X^n,\wt Y^n)$ converges a.s. to $(\wt X^\infty,\wt Y^\infty)$ 
uniformly on $[0,1/\delta]$.

Assume that $\bigcap_{k=1}^{i-1} A_{k}$ holds.  Then, by (\ref{eqnsc6.3new}), 
$Y_d(U^{i-1}) > 4\gamma_i^{\prime}$.
We can therefore
apply Lemma \ref{lem:iteration}
to $\wt X^\infty$ and $\wt Y^\infty$ in place of $x$ and $y$.
Let $\wt U^i$ be the first time $s$ such that 
$|\wt X_i^\infty(s)-\wt Y_i^\infty(s)|\leq \delta$ and note that $\wt U^i \leq 1/\delta$ by Lemma \ref{lem:iteration}.

We are assuming that the conditioning event 
\(\bigcap_{k=1}^{i-1}A_k\)
in \eqref{eqnsc7.1} holds, so 
$|\Pi_{i-1}(X(U^{i-1}) - Y(U^{i-1}))|   \leq 4\delta (i-1)$. This and 
\eqref{eqn:iteration-3} imply that 
$|\Pi_{i-1}(\wt X^\infty(\wt U^{i}) - \wt Y^\infty(\wt U^{i}))|   \leq 4\delta (i-1) + 2 \delta$.
It follows from
\eqref{eqn:iteration-1} that 
$|(\Pi_i-\Pi_{i-1})(\wt X^\infty(\wt U^{i}) - \wt Y^\infty(\wt U^{i}))|   \leq \delta$
so, combining this with the previous estimate, we obtain
\begin{align}\label{kbj1.2}
|\Pi_i(\wt X^\infty(\wt U^{i}) - \wt Y^\infty(\wt U^{i}))|   \leq 4\delta (i-1) + 3 \delta = 4 \delta i -\delta.
\end{align}
From (\ref{eqn:iteration-2}) and the induction hypothesis,
we obtain 
$\inf_{0 \leq t \leq \wt U^i}\wt X_d^\infty(t) = X_d(U^{i-1})\ge L/16 -i +1$; also, 
in view of (\ref{eqn:iteration-4}), 
\begin{align*}
\inf_{0 \leq t \leq \wt U^i}
\wt Y_d^\infty(t) &\geq Y_d(U^{i-1}) - 4 \gamma_{i}^{\prime}
\geq L/16 - 16(i-1)^2  - 4 \gamma_{i}^{\prime}\\
& \geq L/16 - 16(i-1)^2  - 4\left(1 \vee 4(i-1) \right)
\geq L/16 - 16 i^2 + 1\,.
\end{align*}
These observations and the fact that 
$(\wt X^n,\wt Y^n)$ converges a.s. to $(\wt X^\infty,\wt Y^\infty)$ 
uniformly on $[0,1/\delta]$ imply that, for large enough $n$, 
\begin{equation*}
\P\Bigg[|\Pi_i (\wt X^n(\wt U^{i}) - \wt Y^n(\wt U^{i}))|\le 4\delta i , \quad 
\inf_{0 \leq t \leq \wt U^i}
\wt X^n_d(t)\ge L/16 - i, \quad \inf_{0 \leq t \leq \wt U^i}
 \wt Y^n_d(t)\ge L/16 - 16 i^2 \Bigg] > 0\,.
\nonumber
\end{equation*}
By the same argument as in \eqref{o28.2}, it follows that,
for some $U^i \in [U^{i-1}, U^{i-1} + \wt U^i]$,
\begin{equation*}
\P\Bigg[|\Pi_i (X(U^{i}) - Y(U^{i}))|\le 4\delta i , \quad 
\inf_{U^{i-1} \leq t \leq U^i} X_d(t) \ge L/16 - i, \quad
\inf_{U^{i-1} \leq t \leq U^i} Y_d(t)\ge L/16 - 16i^2 \Bigg] > 0\,.
\nonumber
\end{equation*}
This completes the proof of \eqref{eqnsc7.1}.

Application of  \eqref{eqnsc7.1n} and \eqref{eqnsc7.1} with $i=2,\ldots,d-1$
yields, for $X(0) = x(0)$, $Y(0) = y(0)$ with $x_d(0),y_d(0) \ge L/16$, that
\begin{align}
\label{eqnsc7.2}
\P\Big(&|\Pi_{d-1} (X(U^{d-1}) - Y(U^{d-1}))|\le 4\delta (d-1) , \\
&\inf_{0 \leq t \leq U^{d-1}} X_d(t) \ge L/16 - d +1, \quad
\inf_{0 \leq t \leq U^{d-1}} Y_d(t)\ge L/16 - 16(d-1)^2 
\Big) >0. \nonumber
\end{align}

Our final step is very similar to the inductive step presented above but requires some minor modifications, 
where we apply Lemma \ref{lem:iteration-d},
in place of Lemma \ref{lem:iteration},
to processes $\wt X^\infty$ and $\wt Y^\infty$ constructed from the processes $\{X(U^{d-1} + \, \cdot\,)\}$ and
 $\{Y(U^{d-1} + \, \cdot\,)\}$. One of the assumptions of Lemma \ref{lem:iteration-d} is $0 < x_d(0) \leq y_d(0)$ whereas, in our setting,  
$X_d(U^{d-1}) \leq Y_d(U^{d-1})$ need not hold. 
To deal with the situation where $X_d(U^{d-1}) > Y_d(U^{d-1})$, 
we relabel the Lion and the Man in the Lion
and Man problem, exchanging the roles of $X$ and $Y$ in this step if necessary,
so that $L/16 - 16(d-1)^2 \le X_d(U^{d-1}) \leq Y_d(U^{d-1})$ holds.

Similar reasoning to the inductive step presented above,
together with  Lemma \ref{lem:iteration-d} in place of  
Lemma \ref{lem:iteration}, shows that there is a stopping
time $U^d$, with $U^d - U^{d-1} \leq L/\delta$,
such that
\begin{align*}
\mathbb{P}\Big[|X(U^{d}) - Y(U^{d})| &\leq 4\delta (d-1)  +\delta (L+3) \,, \\
L/16 - 16(d-1)^2 &-1 \leq  X_d(t)\leq Y_d(t) + 1, \text{  for } U^{d-1} \leq t \leq U^d \, \,|\, \bigcap_{k=1}^{d-1} A_{k}\Big] \quad > \quad 0.
\end{align*}
Combining this with 
\eqref{eqnsc7.2} implies that 
\begin{align}
\label{eqnscend}
\Prob{| X(U^{d}) - Y(U^{d})| < \delta (4d+L) , 
\quad\inf_{0 \leq t \leq U^{d}} X_d(t) >0, 
\quad\inf_{0 \leq t \leq U^{d}} Y_d(t)>0 
}\quad >\quad 0\,. 
\end{align}
Note that $U^d \leq (L+d)/\delta$.

We now complete the proof of the theorem.
Combining (\ref{eqnsc6.1}) with (\ref{eqnscend}), the lower bound in \eqref{eqnsc6.2} follows
upon setting $t = u_0 + (L+d)/\delta$ and $\delta = \varepsilon / (4d + L)$.
%
\end{proof}

\section{Various examples}\label{sec:variousexamples}
In this section, we present a number of examples involving CL domains and domains with
rubber bands.  Since we will be interested only in domains that satisfy the 
uniform exterior sphere and uniform interior cone conditions in this section,
we will implicitly assume that all domains discussed here satisfy these boundary regularity conditions.

\begin{example}\label{o24.1.i}
An example of a simply-connected domain that is not a CL domain
and yet for which 
all loops are contractible.
\rm
Let $D\subset \Reals^3$ be the interior of the intersection of the upper half-space $z\ge0$ with the spherical shell
$\ball(0,2) \setminus \ball(0,1)$. 
Loops in \(\ol D\) that do not lie wholly on $\prt \ball(0,1)$ can be contracted in \(\ol D\) along rays emanating from $(0,0,0)$ to smaller loops that lie wholly on $\prt \ball(0,1)$.
Loops in \(\ol D\) that lie on $\prt \ball(0,1)$ can be contracted in \(\ol D\) to the point $(0,0,1)$ along
great circles passing through $(0,0,1)$ and perpendicular to the boundary of the upper half-space. 
So, all loops in \(\ol D\) are contractible.
For an example of a rubber band in \(\ol D\) that is not well-contractible, 
consider the intersection $\lp$ of $\prt \ball(0,1)$ with the boundary of the upper half-space. 
Suppose that 
$\lp'\in \Lp$ and $d_H(\lp, \lp') \leq \eps$. Let $\lp''$ be the radial projection of $\lp'$
onto $\prt \ball(0,1)$. It is easy to check that $d_H(\lp, \lp'') \leq \eps$, and therefore
$\ell_{\lp'} \geq \ell_{\lp''} \geq \ell_\lp - c \eps^2$.
Hence, no contraction of $\lp$ satisfies Definition \ref{def:rubband} (b).
\end{example}

\begin{example}\label{o24.1.ii}
Star-shaped domains are CL domains.
\rm 
Suppose that $D$ is star-shaped, that is, for some $z_0\in D$ and all $z\in D$, the line segment between $z_0$ and $z$ is contained in $D$.
Consider any $\lp \in \Lp$ and let
$T_a(z) = z_0 + a(z-z_0)$.
Then, for $t,\gamma \in[0,1)$, 
$H(e^{2\pi i t},\gamma)= T_{1-\gamma} (\lp(t\ell_{\lp}))$ defines a contraction
$\{T_{1-\gamma}K\}_{\gamma \in [0,1)}$ of $\lp$.
Elementary calculations based on scaling show that this contraction satisfies
Definition \ref{def:rubband} (b).  So $D$ is a CL domain.
\end{example}

\begin{example}\label{jan9.2}
$\CAT0$ domains are CL domains.
\rm 
To see this, first note that a given loop can be approximated as closely as desired by a polygon. Choose any
fixed point $x_0\in D$, which will serve as our reference point.  Employing $x_0$ and the endpoints of any of the
line segments defining the polygon, since $D$ is assumed to be $\CAT0$, there is a unique pair of geodesics from  $x_0$ to these endpoints,
$\gamma_1 = \{\gamma_1(t), 0 \leq t \leq t_1\}$ and $\gamma_2 = \{\gamma_2(t), 0 \leq t \leq t_2\}$.
Moreover,
there exist geodesics in $\Reals^2$ (line segments),
$\wt \gamma_1 = \{\wt \gamma_1(t), 0 \leq t \leq t_1\}$ and $\wt\gamma_2 = \{\wt\gamma_2(t),  0 \leq t \leq t_2\}$, 
with
$\dist(\wt\gamma_1(t_1), \wt\gamma_2(t_2))=\intrinsicdist(\gamma_1(t_1), \gamma_2(t_2))$, and such that
$\intrinsicdist(\gamma_1(at_1), \gamma_2(at_2))\leq \dist(\wt\gamma_1(at_1), \wt\gamma_2(at_2))$
for $a\in [0,1]$.  These
geodesics induce a well-contractible homotopy to the point $x_0$, with a contractibility constant $c>0$ 
that is at least as large as that corresponding to a planar convex domain with the same 
diameter.
By
selecting a sequence of polygons that converges uniformly to the given loop and taking limits, one obtains
a well-contractible homotopy, with the same contractibility constant $c$, for the original loop.
This implies $D$ is a CL domain.
Note that star-shaped domains are not necessarily $\CAT0$ and $\CAT0$ domains are not necessarily star-shaped.
\end{example}

\begin{example}\label{jan9.3}
Construction of CL domains by modification of $\CAT0$ domains, and a further generalization.
\rm
For a given $\CAT0$ domain $D$, choose a point $x_0\in D$ as its reference point.  For each $x\in D$, there
exists a unique geodesic $\Gamma_x$ from $x_0$ to $x$; denote by $t_x$ the 
value of the parameter
at which $\Gamma_x(t_x)=x$. 
There are many ways in which one can truncate these geodesics at $s_x\le t_x$ so that the remaining region consisting
of the portions of the geodesics $\Gamma_x(t)$, with $t\in [0,s_x)$, is open and connected, and hence a domain.   The restricted domain $D_1$
thus defined will still be CL domain, but need not be $\CAT0$.  

An illustration is given in
Fig.~\ref{fig:twodumbbells}, where the non-$\CAT0$ domain on the left is obtained from the $\CAT0$ domain on the
right by making a shallow ``dent" on the end of one of its spheres. 
To see that the above recipe works in this case, choose the reference point $x_0$
to be the point inside the domain but on the boundary of the sphere,
at the spot antipodal to the center of the dent.
If the dent is shallow, then all geodesics from $x_0$ to all points
of the sphere, with the dent removed, are line segments.
In other words, the sphere with the dent removed is star-shaped
relative to $x_0$.
 Note that new domain is not
$\CAT0$, since it has a point on the surface where both principal curvatures are negative.

\begin{figure}[twodumbbels]
\centering
\includegraphicsKB[width=4in]{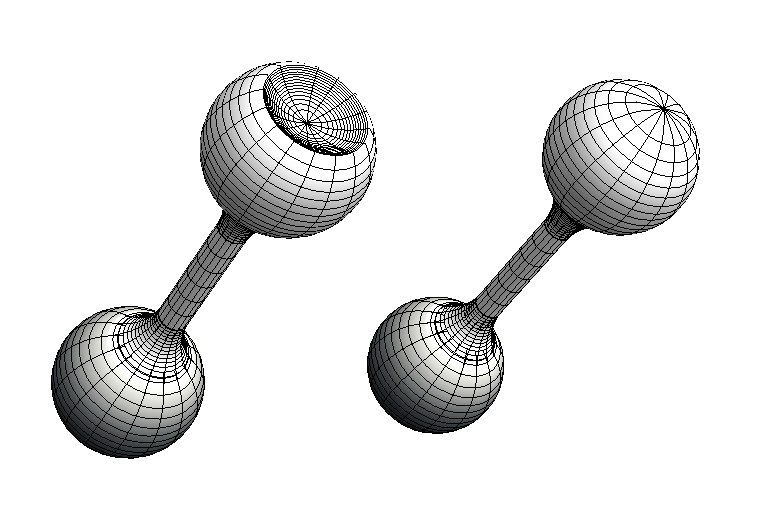}
\caption{The domain on the right that has dumbbells shape is $\CAT0$. The domain on the left
is obtained from the domain on the right by making a shallow spherical dent. 
The domain on the left is not $\CAT0$ but it is a CL domain.}
\label{fig:twodumbbells}
\end{figure}

\begin{figure}[thbp]
\centering
\includegraphicsKB[width=4in]{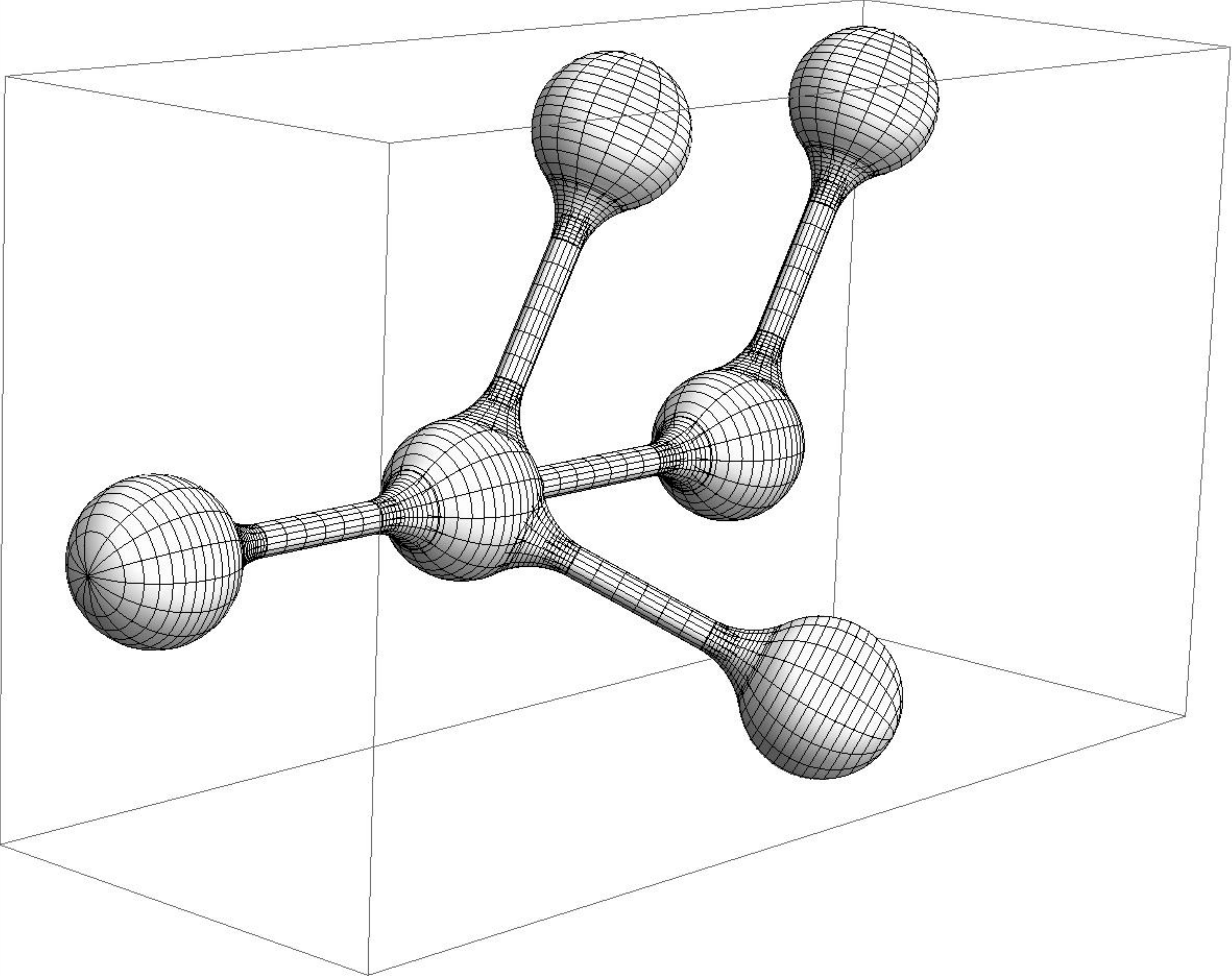}
\caption{This example of a $\CAT0$ domain appeared in \citet{BramsonBurdzyKendall-2011} where shy couplings and pursuit problems were analyzed in $\CAT0$ domains. }
\label{fig:rubdumb}
\end{figure}

A more general family of examples that is related to the previous one can be obtained by considering the $\CAT0$ domain given in \citet{BramsonBurdzyKendall-2011} and reproduced here in Fig.~\ref{fig:rubdumb}. 
Making one or more shallow dents 
in the spheres at the ends of the domain produces a new domain $D_1$ that is a CL domain, but not 
$\CAT0$, for the same reasons as before.   

Any star-shaped domain $D_1$ can be realized by applying the construction at the beginning of the example, and
choosing the domain $D$ to be any 
convex domain
containing $D_1$.  The domain $D$ is $\CAT0$ and its geodesics are the
line segments connecting pairs of points.  So, Example \ref{o24.1.ii} is included in Example \ref{jan9.3}.

We note that, for this construction, 
the $\CAT0$ property for the domain $D$ was only employed to ensure that 
pairs of geodesics emanating from the given reference point $x_0$ satisfy the $\CAT0$ property; the behavior of
other geodesics was not employed.   The reasoning in the first paragraph
thus extends to domains $D$ 
in which the reference point is connected by a single geodesic in $\ol D$ to any point in $\ol D$,
and for which pairs of geodesics
starting at the reference point satisfy the $\CAT0$ property.  The construction given in Example \ref{jan9.3} can therefore
also be viewed as a natural generalization of that given in Example \ref{o24.1.ii}.
\end{example}

\begin{example}\label{o24.1.vi}
There are domains with semi-stable rubber bands that are not stable.
\rm
\begin{figure}[htbp]
\centering
\includegraphicsKB[width=3in]{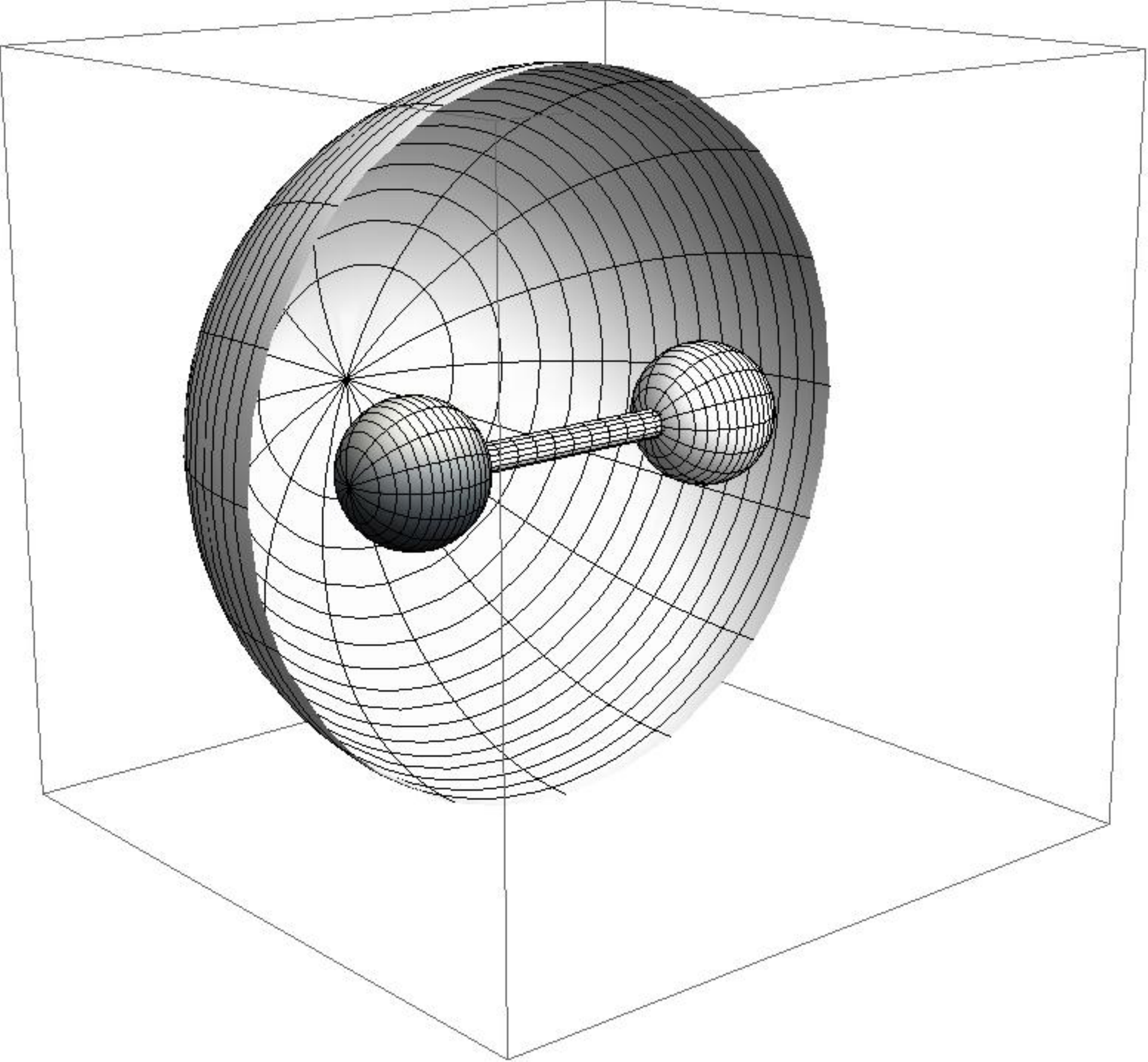}
\caption{``Dumbbells domain'' with a semi-stable rubber band and no stable rubber band.
One half of the outer boundary
is cut away to show the interior part of the boundary. Drawing is not to scale.}
\label{fig:rub1}
\end{figure}
Consider a domain $D$ that is a ball from which a dumbbell has been removed (see Figure \ref{fig:rub1}):
\begin{align*}
D = \ball(0,100) \setminus\left(
\ball((-10,0,0),2) \cup \ball((10,0,0),2)
\cup \bigcup_{-10\leq a \leq 10} \ball((a,0,0),1)
\right)\,.
\end{align*}
The loop $\{(0,\cos t, \sin t), 0 \leq t < 2\pi\}$ is semi-stable. 
It is not a stable rubber band.
\end{example}

\begin{example}\label{o24.1.vii}
If a bounded domain is not simply connected, then it must possess at least one semi-stable rubber band.
\rm
Homotopies preserve the homotopy class of a loop. If a domain fails to be simply connected, then there exists a non-trivial homotopy class of loops
and, within this class, there will be at least one loop minimizing the length function. This loop will be semi-stable, though not necessarily stable.
\end{example}

\begin{example}\label{o24.1.v}
An explicit construction of a domain with a stable rubber band.
\rm
\begin{figure}[bhtp]
\centering
\includegraphicsKB[width=2.5in]{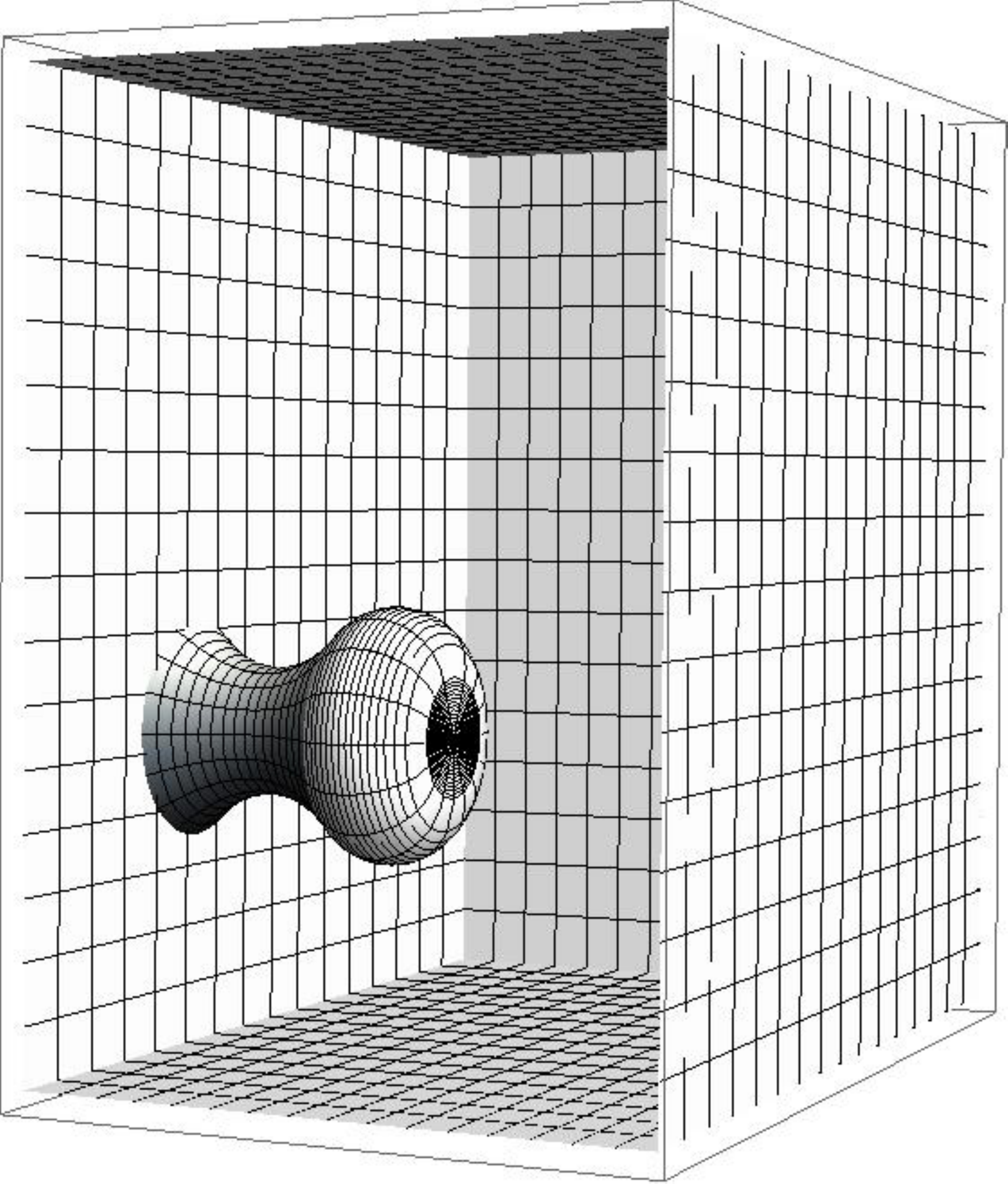}
\caption{``Doorknob domain'' with a stable rubber band.
Part of the outer boundary
is cut to show the interior part of the boundary. Drawing is not to scale.}
\label{fig:rub2}
\end{figure}
The essence of this construction is given in Figure \ref{fig:rub2}: a doorknob is attached to the interior of a box, and the stable rubber band fits around the neck of the doorknob.
Let
\begin{align*}
D &= ((0,10) \times (-10,20)^2)\\
&\setminus \Big(
\{(x,y,z) \in \Reals^3: y^2 + z^2 \leq 1 + (x-1)^2,\  0\leq x \leq 2\} \\
& \qquad \cup \{(x,y,z) \in \Reals^3: y^2 + z^2 \leq  - 4 (x-3) (x- 3/2),\  2 \leq x \leq 3\} \Big).
\end{align*}
The boundary of $D$ is not smooth, although edges can be smoothed without affecting the stable rubber band defined below.
We will argue that the following loop is a stable rubber band,
\begin{align*}
\lp(t) = (1, \cos(t), \sin(t)), \qquad 0 \leq t < 2\pi.
\end{align*}
Let $\eps=1/10$ in Definition \ref{o19.1} and fix some
$0<\eta \leq\eps$.
Suppose that
$\lp_1\in \Lp$ is such that $d_H(\lp, \lp_1)\geq \eta$ and, for some $n\geq 1$, there exists a continuous mapping $H: \SS \times [0,1] \to \ball(\lp,\eps)$
satisfying the conditions in Definition \ref{o19.1}. It 
is evident
that, for some $\delta>0$ depending only on $D$ and $\eta$, 
the length of the projection $\wh \lp_1$ of $\lp_1$ on the plane 
$A=\{(x,y,z): x=0\}$ 
 has length greater than $2\pi n +\delta$. 
Thus \(\lp\) is a stable rubber band in \(D\).
\end{example}

Physical intuition suggests that a ``typical domain" either contains a semi-stable rubber band or the domain is a CL domain, although it is easy
to construct domains that satisfy neither condition.  An intuitive justification is based on part (iv) of Definition \ref{def:rubband}, since
a ``typical function" on a compact interval is either non-monotone or its slope has one sign and is bounded away from $0$.   We close this
section with Conjecture \ref{conj:nowhere-dense}, which makes this claim precise.  The conjecture employs the following definition.


\begin{defn}\label{def:domain-space}
Consider the family $\mathbf{D}$ of all open bounded non-empty sets $D\subset \Reals^d$ with smooth boundary and $\kappa(D) <\infty$, 
where $\kappa(D)$ is the supremum over all $z\in \prt D$ of 
the absolute values of the principal curvatures at $z$.
The Gromov-Hausdorff distance induces a topology on this family.
Let
$\DD$ be the topological space of all pairs $(D, \kappa(D))$, 
$D\in \mathbf{D}, \kappa(D) >0$,
equipped with the product topology.
Let $\DD_s$ be the set of $(D, \kappa(D))$ such that $D$ contains a 
 semi-stable 
rubber band and
let $\DD_c$ be the set of $(D, \kappa(D))$ such that $D$ is a CL domain.
\end{defn}

Conjecture \ref{conj:nowhere-dense} states that the set of domains with semi-stable rubber bands and 
the set of CL domains are each open in the above topology, and that
the set of domains remaining, after removing these two sets, is nowhere dense.

\begin{conjecture}\label{conj:nowhere-dense}
The sets $\DD_s$ and $\DD_c$ are open in $\DD$. 
The set $\DD \setminus (\DD_s \cup \DD_c)$ is nowhere dense.
\end{conjecture}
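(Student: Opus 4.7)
The plan is to handle the three assertions separately, with openness of $\DD_s$ being the most straightforward, openness of $\DD_c$ being the most delicate, and the nowhere-density property following from a perturbation argument that exploits the first-variation dichotomy behind Definition \ref{def:rubband}.

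First I would address openness of $\DD_s$. Given $(D,\kappa(D))\in\DD_s$ with semi-stable rubber band $\lp$, choose $\eps>0$ as in Definition \ref{o19.1}(a). The strategy is to invoke Remark \ref{rem:stability}, which says that semi-stability depends only on the germ of $D$ along $\lp$. Thus, for any $D'$ close to $D$ in the Gromov-Hausdorff metric with comparable curvature bound, one can use the GH near-isometry to transport $\lp$ to a rectifiable loop $\lp'\subset D'$ whose germ agrees with that of $\lp$ up to small distortion. Quantifying this with the regularity of intrinsic distance in ESIC domains (Proposition 12 of \citealp{BramsonBurdzyKendall-2011}), one obtains a loop near $\lp$ in $D'$ which is still length-minimizing against small perturbations within $\ball(\lp',\eps/2)$.

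For openness of $\DD_c$, which is the main obstacle, the aim is to show that the contractibility constant $c$ can be chosen uniformly over a neighborhood of $(D,\kappa(D))$. The plan is contrapositive: suppose $(D_n,\kappa(D_n))\to (D,\kappa(D))$ with each $D_n$ failing to be CL uniformly. Then there exist loops $\lp_n\subset D_n$ with contractibility constants $c_n\to 0$. Using the curvature bound $\kappa(D_n)$ (which controls reach and intrinsic-versus-Euclidean distance comparisons) together with the GH convergence, one can extract from $\lp_n$ a limit loop $\lp_\infty\subset D$. One then needs to show $\lp_\infty$ inherits obstruction to well-contractibility in $D$, contradicting $D\in\DD_c$. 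The hard part is that well-contractibility is an inequality about all length-monotonic homotopies, so one must produce, for any given candidate homotopy of $\lp_\infty$ in $D$, an approximating homotopy of $\lp_n$ in $D_n$; this requires adapting the length-monotonic construction of Lemma \ref{lem:cat-kappa-homotopy} under the GH perturbation, which in turn leans on uniform $\CAT\kappa$ estimates. A continuity argument for contractibility constants under Reshetnyak-type majorization (compare with the spherical calculation in Lemma \ref{lem:spherical-geometry} and Proposition \ref{prop:k-broken}) should close the loop.

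For nowhere-density, I would argue that any $(D,\kappa(D))$ not in $\DD_s\cup\DD_c$ can be $\eps$-perturbed into $\DD_s\cup\DD_c$. Since $D\notin\DD_c$, there exist loops in $D$ whose contractibility constant can be made arbitrarily small; passing to a limit via the curvature-controlled compactness used above yields a semi-stable candidate loop $\lp$ that fails to be strictly stable (so only marginally resists contraction). One then perturbs $D$ in a tubular neighborhood of $\lp$ in one of two ways: either by carving a slight inward indentation along a transverse circle near $\lp$, which forces $\lp$ to become a \emph{strictly} length-minimizing local geodesic, hence a stable rubber band (putting the perturbed domain into $\DD_s$ by openness of $\DD_s$); or by locally smoothing out the obstruction that prevents length-monotone contraction, yielding a well-contractible homotopy and, with care, uniform well-contractibility for all loops (putting the perturbed domain into $\DD_c$). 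The principal-curvature bound is preserved since the modifications are controlled $C^2$-small bumps.

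The main obstacle, as indicated, will be openness of $\DD_c$: controlling the contractibility constant uniformly under GH perturbation requires a quantitative version of Reshetnyak majorization that survives approximation, and the constant's dependence on the contraction endpoint $H(\SS,1)$ in Definition \ref{def:rubband}(c) must also be handled. A related subtlety for nowhere-density is ensuring that the local modification of $D$ near $\lp$ does not inadvertently spoil well-contractibility of some \emph{other}, unrelated, loop family; this is why the perturbation should be $C^2$-small and supported in a thin tube around $\lp$.
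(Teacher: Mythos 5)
This statement is labelled a \emph{Conjecture} in the paper, and the paper offers no proof of it --- only the heuristic analogy with monotone functions given in the paragraph preceding it. So there is no proof to compare against; what matters is whether your proposal actually closes the question, and it does not. It is a plan with the decisive steps asserted rather than proved.

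The most concrete gap is in the openness of $\DD_c$. Your contrapositive argument extracts a limit loop $\lp_\infty$ from loops $\lp_n\subset D_n$ with contractibility constants $c_n\to0$, but there is no uniform bound on $\ell_{\lp_n}$, and indeed the paper's own Proposition \ref{prop:k-broken} shows that the natural witnesses to small contractibility constants are loops of length tending to infinity (there $c\leq(7+11k)/\ell_\lp$). A sequence of ever-longer loops has no finite-length subsequential limit, so the compactness step collapses exactly in the regime that matters. Separately, Gromov--Hausdorff closeness supplies only approximate correspondences, not maps, so ``transporting'' loops, germs, and length-monotonic homotopies between $D$ and nearby $D'$ --- which you rely on for openness of both $\DD_s$ and $\DD_c$ --- is unjustified; even the persistence of the relevant homotopy class under a GH-small perturbation with comparable curvature bound would need an argument.

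The nowhere-density step is weaker still: the dichotomy ``either carve an indentation to create a stable band, or smooth out the obstruction to obtain uniform well-contractibility'' is precisely the content of the conjecture restated as a perturbation recipe. You give no criterion for which branch applies, no construction showing that one of the two always succeeds, and no control ensuring that a $C^2$-small modification near one loop does not destroy well-contractibility witnessed elsewhere (a failure of the CL property need not be localized to any single loop). As it stands the proposal identifies the right difficulties --- which is of some value --- but resolves none of them, and the statement should still be regarded as open.
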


\section{Acknowledgments}

We are grateful to Stephanie Alexander, Dick Bishop and Yu Yuan for their most useful advice.
We thank  Chanyoung Jun for sending us his interesting Ph.D. thesis \citep{JunPhD}.

\bibliographystyle{chicago}
\bibliography{rubberband}

\appendix   
\section{Uniform exterior sphere and uniform interior cone conditions imply $\CAT\kappa$}\label{app:reach}
In this appendix,  we will employ a theorem from \citet{Lytchak-2004}, to show that
the uniform exterior sphere and uniform interior cone conditions
imply the \(\CAT\kappa\) property, that is, ESIC domains are
\(\CAT\kappa\), for some \(\kappa>0\).
In order to establish this, it is useful to employ the following definition.
\begin{defn}[Lipschitz domain] \label{def:lipschitz-domain}
Recall
that a function $f: \Reals^{d-1} \to \Reals$ is
\emph{Lipschitz, with constant $\lambda < \infty$,} if $|f(v) - f(z)| \leq \lambda
|v-z|$ for all
$v,z \in \Reals^{d-1}$.
A domain \(D\in \mathbb{R}^d \) is \emph{Lipschitz, with
constant $\lambda$,} if there exists $\delta >0$ such that, for
every $v \in \prt D$, there exists an orthonormal
basis $e_1, e_2, \ldots, e_d$
and a Lipschitz function $f: \Reals^{d-1} \to
\Reals$, with constant $\lambda$, such that
\[
\{w \in \ball(v,\delta) \cap D\} \quad=\quad
\{w\in\ball(v,\delta): f(w_1, \dots, w_{d-1})<w_d\}\,,
\]
where we write $w_1=\langle w,e_1\rangle$, \ldots, $w_d=\langle w,e_d\rangle$.
\end{defn}
As noted in \citet[Section 2]{BramsonBurdzyKendall-2011},
Definition \ref{def:lipschitz-domain}  is equivalent to the  uniform interior cone
condition \ref{def:UICC} , with \(\lambda=\cot\alpha\).  Moreover,
if either holds for a given \(\delta>0\), then both hold for that \(\delta\) and all smaller \(\delta\).

Verification of the  \(\CAT\kappa\) property for ESIC domains is most easily done by first establishing
that the uniform exterior sphere condition implies a property known as \emph{positive reach}. 
We begin by stating its definition (adapted from
\citealp{Lytchak-2004}), and then sketch the \(\CAT\kappa\) implication in Lemma \ref{n7.1} and Corollary \ref{n7.2}.
\begin{defn}
A set $A\subset \Reals^d$ \emph{has positive reach greater than or equal to $r$} if, for
all $z \in \Reals^d$ with $\dist(z,A) \leq r$, there is a unique point $v\in A$ with $\dist(z,v) = \dist(z,A)$.
\end{defn}

\begin{lem}\label{n7.1}
Suppose that $D$ is a bounded domain that satisfies a uniform exterior sphere
condition and a uniform interior cone
condition. 
Then $\ol D$ has positive reach. 
\end{lem}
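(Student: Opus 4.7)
My plan is to prove the stronger quantitative statement that $\ol D$ has reach at least $r$, where $r$ is the radius from the uniform exterior sphere condition of Definition~\ref{def:exterior-sphere-condition}; the uniform interior cone condition is not actually needed for this bare statement but will enter in Corollary~\ref{n7.2} via Lytchak's theorem. Existence of a nearest point of $\ol D$ to any $w \in \Reals^d$ near $\ol D$ follows from compactness of $\ol D$; the case $w \in \ol D$ is trivial, so I would focus on $w \notin \ol D$ at distance $\eps := \dist(w,\ol D) < r$, for which every nearest point must lie in $\prt D$.

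The heart of the argument is that proximity forces a normal structure. I would first show that if $v \in \prt D$ realizes $|w-v|=\eps$, then the unit vector $\mathbf{m} := (w-v)/\eps$ automatically lies in $\NN_{v,\eps}$, because the open ball $\ball(w,\eps) = \ball(v+\eps\mathbf{m},\eps)$ is disjoint from $\ol D$ and hence from $D$. Here the monotonicity clause $\NN_{v,s} = \NN_{v,r}$ for $s \in (0,r]$ in Definition~\ref{def:exterior-sphere-condition} does the essential work: it upgrades $\mathbf{m} \in \NN_{v,\eps}$ to $\mathbf{m} \in \NN_{v,r}$, so the larger open ball $\ball(v+r\mathbf{m},r)$ is disjoint from $D$, and then also from $\ol D$ (any open set disjoint from $D$ must be disjoint from $\ol D$, since points of $\ol D$ are limits of points of $D$).

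For uniqueness, I would suppose that $v_1 \ne v_2$ are both nearest to $w$ at distance $\eps < r$, and set $\mathbf{m}_1 := (w-v_1)/\eps$. The previous step excludes $v_2$ from $\ball(v_1+r\mathbf{m}_1,r)$, which after expanding the resulting norm inequality gives
\[
|v_2 - v_1|^2 \;\ge\; 2r\,\mathbf{m}_1 \cdot (v_2 - v_1)\,.
\]
On the other hand, the equidistance condition $|w-v_1|=|w-v_2|$ forces $\mathbf{m}_1 \cdot (v_2-v_1) = |v_2-v_1|^2/(2\eps)$ by a two-line calculation (essentially the midpoint identity for the perpendicular bisector of $v_1 v_2$). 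Combining and dividing by $|v_2-v_1|^2 > 0$ yields $\eps \ge r$, contradicting $\eps < r$, so the closest point is unique.

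The one real subtlety is appreciating the role of the monotonicity clause in Definition~\ref{def:exterior-sphere-condition}: without it, one would only recover $\mathbf{m}_1 \in \NN_{v_1,\eps}$, and the uniqueness inequality would collapse to the tautology $\eps \ge \eps$. With the clause in hand, the argument reduces to a short consequence of the defining property, namely that the exterior ball of radius $r$ supplies a uniform convexity-like estimate at every boundary point. It is precisely this estimate, combined in subsequent work with the uniform interior cone condition, that Lytchak's theorem turns into the \(\CAT\kappa\) property asserted in Corollary~\ref{n7.2}.
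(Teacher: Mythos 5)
Your proof is correct, and it takes a genuinely different route from the paper's. You rely entirely on the monotonicity clause $\NN_{z,s}=\NN_{z,r}$ for $0<s\leq r$ in Definition \ref{def:exterior-sphere-condition}: a nearest point $v$ of $\ol D$ to $w$ at distance $\eps<r$ produces a proximal normal $\mathbf{m}\in\NN_{v,\eps}$, the clause promotes it to $\NN_{v,r}$, and the two elementary identities
\[
|v_2-v_1|^2\;\geq\;2r\,\mathbf{m}_1\cdot(v_2-v_1),\qquad \mathbf{m}_1\cdot(v_2-v_1)=\tfrac{1}{2\eps}\,|v_2-v_1|^2
\]
force $\eps\geq r$, so the reach is at least $r$ and the interior cone condition is never touched. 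The paper's proof is quite different: it uses both boundary conditions, passes to a two-plane $M$ through the two competing nearest points, invokes the Lipschitz-graph description of $\prt D$ supplied by the interior cone condition together with the fact that planar sections of $\prt D$ are supported by exterior disks of radius $r\sin\alpha$, and concludes by plane geometry of circles that the reach is at least $\min\{\delta, r\sin\alpha\}$. Your route buys a short argument and the sharp constant $r$; the paper's route buys robustness, since it draws on the exterior sphere condition only through the existence of \emph{some} exterior tangent ball of radius $r$ at each boundary point and does not lean on the monotonicity clause. That distinction matters for consistency with Example \ref{ex:two-examples}(i), which exhibits a domain said to ``satisfy a uniform exterior sphere condition'' yet lacking positive reach: under your reading that domain must (and, one can check, does) violate the monotonicity clause at the circles $C_n$, where an exterior tangent ball of radius $r_n\to0$ exists in the direction of the circle's centre but no ball of uniform radius does in that direction. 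So your proof is valid for Definition \ref{def:exterior-sphere-condition} as literally stated, and you have correctly identified the monotonicity clause as the load-bearing hypothesis; just be aware that the paper elsewhere appears to use the weaker reading of the exterior sphere condition, which is why its own proof of this lemma also calls on the interior cone condition.
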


\begin{proof}
Suppose \(r\) is the uniform exterior ball radius and \(\alpha\in(0,\pi/2]\) is the uniform interior cone angle.
We shall show that the reach is at least as large as the minimum of \(r\sin\alpha\) and 
a positive constant \(\delta\) relating to the uniform interior cone condition. Here, \(\delta > 0\)
is chosen small enough so that, in any ball of radius \(\delta\), we may implement the uniform interior cone condition
with a fixed axis \(e_d\); moreover, we shall choose \(\delta\) small enough so that the cones extend sufficiently far 
so, within the ball, the domain \(D\) may be described as
the super-level set of a Lipschitz function, with Lipschitz constant \(\cot\alpha\), 
as given in Definition \ref{def:lipschitz-domain}. 

If the reach \(s'\) is smaller than \(\delta\), then there must exist \(s\geq s'\) and arbitrarily close to \(s'\)
such that there exists $z\in \Reals ^d$, with $\dist(z, \ol D)=s$, 
so that, for
distinct points $v_1, v_2 \in \prt D$, $\dist(z,v_1) = \dist(z,v_2) = \dist(z, \ol D) = s$. 
 We shall show that this will imply
\(s\geq r \sin\alpha\). From this, it will follow that \(D\) has positive reach at least as great 
as \(\min\{\delta, r\sin\alpha\}\).

Suppose that $e_d$ is the \(d^\text{th}\) vector in the orthonormal basis
corresponding to points in \(\ball(z,\delta)\) as in Definition \ref{def:lipschitz-domain}, noting that we have chosen \(s\) small enough so that there is a single Lipschitz function representation within \(\ball(z,\delta)\) based on \(e_d\).
Let \(M\) be a \(2\)-plane intersecting \(D\) and containing \(v_1\), \(v_2\), and \(v_1+e_d\). We note in passing that \(M\)
will also contain \(v_2+e_d\). As a consequence of \citet[Lemma 11]{BramsonBurdzyKendall-2011},
any point 
in \(M\cap \prt D\cap\ball(z,\delta)\)
 must be supported by an open disk in \(M\) of radius \(r\sin\alpha\), such that the disk and \(M\cap D\) are disjoint.

The ball \(\ball(z,s)\) intersects \(M\) in an open disk \(C_1\) of radius at most \(s\); moreover, 
it follows from their definition that \(v_1\) and \(v_2\) must lie on the boundary of \(C_1\). Furthermore, we may use the uniform interior cone condition (based locally on \(e_d\)) to argue that the line \(\ell\) through \(v_1\) and \(v_2\) must separate (in \(M\)) the center of \(C_1\) from the points \(v_1+e_d\), \(v_2+e_d\).

Let $v_3 \in M \cap \prt D \cap\ball(z,\delta)$ be the point with the same first $d-1$ coordinates as $(v_1+v_2)/2$.
(By the choice of \(\delta\), there will be exactly one such point.)
As we have noted above, $v_3$ lies on the boundary of an open disk $C_2$ with radius $r \sin \al$, 
which is disjoint from $\ol D$. 
Moreover, \(v_3\) must be separated in \(M\) from the center of \(C_1\) by \(\ell\), 
since otherwise \(C_1\) will intersect with \(D\).

We next argue using plane geometry as follows. Consider the case in which the disk \(C_2\) lies on one side of \(\ell\). Then the Lipschitz representation of \(D\cap \ball(z,\delta)\) implies that it is constrained to lie between the lines \(v_1+\mathbb{R}e_d\)
and \(v_2+\mathbb{R}e_d\), and hence has diameter no greater than \(\dist(v_1,v_2)\leq 2s\).
So, in this case, we can deduce that \(s\geq r\sin\alpha\).

\begin{figure}[thbp]
\centering
\includegraphicsKB[width=4in]{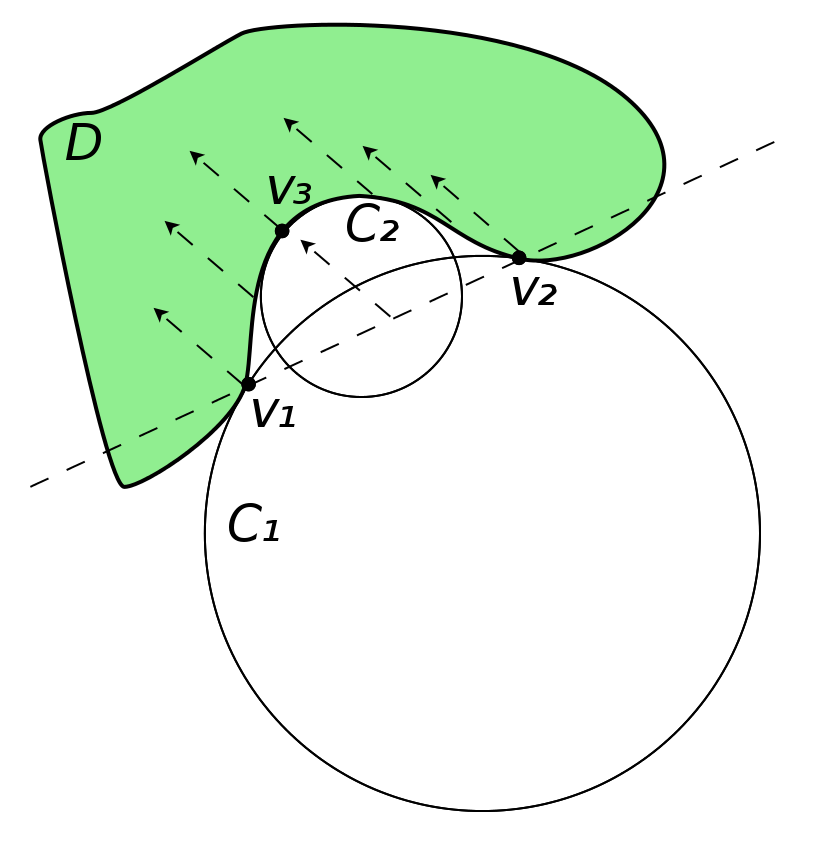}
\caption{Illustration of the argument showing that the uniform exterior sphere and interior cone conditions
imply positive reach.
}
\label{fig:positive-reach}
\end{figure}
%
Consider now the case in which the disk \(C_2\) intersects \(\ell\) at two points. Again using the Lipschitz representation,
the intersections must lie on the segment \(v_1v_2\) (since both \(v_1\) and \(v_2\) are 
in \(\prt D\cap M\)).
 The configuration of \(C_1\), \(C_2\), \ldots is illustrated in Figure \ref{fig:positive-reach}.
The planar geometry of circles allows us to deduce that the radius \(r\sin\alpha\)  of \(C_2\) must be smaller than the radius of \(C_1\), which itself is no larger than \(s\). Hence, we deduce that \(s\geq r\sin\alpha\) in this case as well.
\end{proof}


%

We quote the following theorem verbatim from \citet[Theorem 1.1]{Lytchak-2004}.
\begin{thm}\label{m29.3}
Let $M$ be a smooth Riemannian manifold, $Z$ a compact subset of $M$
that has positive reach. Then $Z$ has an upper curvature bound with respect to the
inner metric.
\end{thm}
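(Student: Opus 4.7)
The plan is to show that positive reach implies a CAT$(\kappa)$ upper curvature bound for the inner metric on $Z$, with $\kappa$ depending on the reach $r$ and on the ambient curvature of $M$. By compactness of $Z$, one may localize and extract a uniform positive reach; by working in local coordinates, one reduces to the case when $M$ is Euclidean, modulo corrections that can be absorbed into $\kappa$.

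The first step is to exploit the nearest-point projection $\pi_Z : U \to Z$ defined on the open tubular neighborhood $U$ of radius less than $r$. Positive reach guarantees this projection is single-valued, and a standard variational calculation (of the same flavor as the one used in the proof of Lemma \ref{n7.1}) shows it is Lipschitz on the smaller tube of radius $\leq r/2$. From this one deduces an \emph{almost convexity} property for $Z$: for any two points $p, q \in Z$ that are sufficiently close, the Euclidean segment $\gamma$ joining them lies inside $U$, and the projected curve $\pi_Z \circ \gamma$ lies in $Z$ with length comparable to $|p-q|$. Consequently, the inner distance $d_Z(p, q)$ is comparable to the Euclidean distance, and is attained by rectifiable curves in $Z$.

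The second step is the triangle comparison itself. Given a small geodesic triangle $\triangle pqr$ in $(Z, d_Z)$, a first-variation argument using $\pi_Z$ shows that each side has extrinsic curvature controlled by $1/r$: were a minimizing side to bend more sharply than some multiple of $1/r$, one could shorten it by replacing a small arc with the $\pi_Z$-projection of the corresponding ambient chord, contradicting minimality. With such controlled extrinsic bending of the sides, one may fit circular-arc approximations on a model sphere of radius $1/\sqrt{\kappa}$ for appropriately chosen $\kappa$, and the CAT$(\kappa)$ inequality for distances between arbitrary points of $\triangle pqr$ then follows from spherical trigonometry combined with the Lipschitz contraction property of $\pi_Z$.

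The main obstacle is the irregularity of $Z$: sets of positive reach are not in general smooth submanifolds, inner-metric geodesics in $Z$ need not be $C^1$ curves in $M$, and $\pi_Z$ is only Lipschitz rather than differentiable. The technical heart of the argument is therefore to carry out both the first-variation estimate and the triangle comparison entirely in terms of tangent cones at points of $Z$ and one-sided derivatives, replacing smooth geometric tools by their non-smooth analogues. This is precisely the machinery that Lytchak develops (building on Federer's foundational work on sets of positive reach), and the genuine effort lies in establishing these generalized variation formulae with enough quantitative control to yield a uniform upper curvature bound.
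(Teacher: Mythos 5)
This theorem is not proved in the paper at all: it is quoted verbatim from Lytchak (2004, Theorem~1.1) and cited as an external result, so there is no internal proof to compare your sketch against.

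As a summary of Lytchak's strategy, your outline captures the right flavour in broad strokes: compactness gives a uniform reach; positive reach gives a single-valued, locally Lipschitz nearest-point projection on a tube; that projection is the engine that both shows the inner metric is bi-Lipschitz to the ambient one locally and controls the extrinsic bending of inner geodesics; and the real difficulty is that $Z$ is merely of positive reach, so one must replace smooth first-variation arguments with statements about tangent cones and one-sided derivatives in the sense of Federer. However, as written this is a roadmap, not a proof. The pivotal steps are asserted rather than established: the first-variation estimate bounding extrinsic curvature of minimizing curves, the passage from that bound to a CAT$(\kappa)$ triangle comparison (your ``fitting circular-arc approximations on a model sphere'' plus ``spherical trigonometry'' is not an argument), and the quantitative uniformity of $\kappa$ over $Z$ are each nontrivial and are exactly the content of Lytchak's paper. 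Your final paragraph essentially concedes this by deferring the technical heart to ``the machinery that Lytchak develops.'' Since the paper deliberately treats this as a black box, citing it is the appropriate move; if one wanted a self-contained account, one would need to actually reproduce Lytchak's tangent-cone analysis, not gesture at it.
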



Lemma \ref{n7.1} and Theorem \ref{m29.3} imply immediately

\begin{cor}\label{n7.2}
Suppose that $D$ is a bounded domain which satisfies the uniform exterior sphere
and uniform interior cone conditions. 
Then $\ol D$ is a $\CAT\kappa$ space for some $\kappa \ge 0$. 
\end{cor}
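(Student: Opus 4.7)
The plan is to obtain the corollary as a direct consequence of Lemma \ref{n7.1} and Theorem \ref{m29.3}. Take the ambient Riemannian manifold $M=\Reals^d$ (with its standard Euclidean structure) and let $Z=\ol D$. Since $D$ is bounded, $\ol D$ is a compact subset of $M$. By Lemma \ref{n7.1}, $\ol D$ has positive reach. Theorem \ref{m29.3} of \citet{Lytchak-2004} then applies verbatim, yielding that $\ol D$, equipped with its intrinsic metric, carries an upper curvature bound. That is, there exists $\kappa_0\geq0$ such that every point of $\ol D$ admits a neighborhood on which the $\CAT{\kappa_0}$ triangle comparison holds and within which pairs of points are joined by unique geodesics.

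Next, I would upgrade this local curvature bound to the global $\CAT\kappa$ condition of Definition \ref{def:CAT-definitions}. By compactness of $\ol D$, a single scale $r>0$ can be chosen such that every intrinsic ball of radius $r$ in $\ol D$ satisfies the $\CAT{\kappa_0}$ triangle comparison and has unique geodesics. I would then pick $\kappa\geq\kappa_0$ large enough that $2\pi/\sqrt{\kappa}<r$; this also forces $\pi/\sqrt{\kappa}<r/2$, below the geodesic-uniqueness scale. Any geodesic triangle in $\ol D$ with perimeter strictly less than $2\pi/\sqrt{\kappa}$ then sits inside some intrinsic ball of radius $r$, so the $\CAT{\kappa_0}$ comparison applies to it; similarly, any two points with intrinsic distance less than $\pi/\sqrt{\kappa}$ are joined by a unique geodesic. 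Monotonicity of spherical geometry (model triangles with fixed side-lengths become ``fatter'' on a sphere of smaller radius) then gives that $\CAT{\kappa_0}$ implies $\CAT\kappa$ for $\kappa\geq\kappa_0$ on the same data, so $\ol D$ is globally $\CAT\kappa$.

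The principal obstacle is the local-to-global promotion, which rests on two routine but not wholly trivial ingredients: (i) a compactness argument producing a uniform scale $r$ on which $\CAT{\kappa_0}$ holds, and (ii) the monotonicity of the $\CAT\kappa$ comparison in $\kappa$, which amounts to a comparison of side-length-prescribed triangles on two spheres of different radii. Once these are in place, the corollary follows immediately from Lemma \ref{n7.1} and Theorem \ref{m29.3}, with the value of $\kappa$ extracted as described above.
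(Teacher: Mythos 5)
Your proof is correct and takes essentially the same route as the paper: combine Lemma \ref{n7.1} (positive reach of \(\ol D\)) with Lytchak's Theorem \ref{m29.3}. The paper states that these two results ``imply immediately'' the corollary, whereas you usefully spell out the local-to-global upgrade from Lytchak's local upper curvature bound to the global \(\CAT\kappa\) condition of Definition \ref{def:CAT-definitions} --- compactness to obtain a uniform convexity radius, then a choice of \(\kappa\) large enough that all triangles of perimeter below \(2\pi/\sqrt\kappa\) fit inside such a ball, using monotonicity of the spherical comparison in \(\kappa\) --- a step the paper treats as standard and leaves implicit.
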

\begin{rem}\label{m29.1}
\cite[Theorem 12]{AlexanderBishopGhrist-2009} give a short proof of this result in the case where \(D\) has a smooth boundary.
\end{rem}


%
\begin{example}\label{ex:two-examples}\rm
We give two examples illustrating concepts related to Lemma \ref{n7.1} and Corollary \ref{n7.2}.  The easy
proofs are left to the reader.

(i) Let $r_n = \frac13 (\frac1n - \frac1{n+1})$ and let  $C_n$ be the circle $\{(z_1,z_2,z_3) \in \Reals^3: (z_1-\frac1n )^2 + z_2^2 = r_n^2, z_3 = 0\}$. Let $\ball_n^+$ and $\ball_n^-$ be two distinct balls with radii 1 such that the intersection of their boundaries with the plane $\{(z_1,z_2,z_3) \in \Reals^3: z_3 = 0\}$ is $C_n$. Let $D = \ball(0,10) \setminus \bigcup_{n\geq 1} (\ol \ball_n^+ \cup \ol \ball_n^-)$.  
The domain $D$ satisfies a uniform exterior sphere
condition, but it does not have a positive reach.

(ii) Let $z_1=(0,0,1)$, $z_2 = (0,0,-1)$ and $D = \ball(0, 10) \setminus (\ol{\ball(z_1,1)} \cup \ol{\ball(z_2,1)})$. The domain $D$ satisfies a uniform exterior sphere
condition and has a positive reach,
but it is not a Lipschitz domain.
\end{example}

\end{document}